\definecolor{darkgreen}{rgb}{0,0.5,0}
\definecolor{darkblue}{rgb}{0,0,0.5}
\newcommand{\relmiddle}[1]{\mathrel{}\middle#1\mathrel{}}
\renewcommand{\epsilon}{\varepsilon}
\newcommand{\eps}{\epsilon}
\renewcommand{\phi}{\varphi}
\newcommand{\abs}[1]{\lvert #1 \rvert}
\newcommand{\bra}[1]{\langle #1 |}
\newcommand{\bbC}{\mathbb{C}}
\newcommand{\bbZ}{\mathbb{Z}}
\newcommand{\bbR}{\mathbb{R}}
\newcommand{\Nabla}{\bnabla}
\newcommand{\Vol}{\mathrm{Vol}}
\newcommand{\calC}{\mathcal{C}}
\newcommand{\calD}{\mathcal{D}}
\newcommand{\calF}{\mathcal{F}}
\newcommand{\calL}{\mathcal{L}}
\newcommand{\calM}{\mathcal{M}}
\newcommand{\calN}{\mathcal{N}}
\newcommand{\calT}{\mathcal{T}}
\newcommand{\su}{\mathfrak{su}}
\newcommand{\SU}{\mathrm{SU}}
\newcommand{\SL}{\mathrm{SL}}
\newcommand{\End}{\text{End}}
\newcommand{\bnabla}{\boldsymbol{\nabla}}
\newcommand{\isom}{\cong}
\newcommand{\dbar}{\overline{\partial}}
\renewcommand{\Im}{\mathrm{Im}}
\DeclareMathOperator{\Gr}{Gr}
\DeclareMathOperator{\pdeg}{pdeg}
\DeclareMathOperator{\diag}{diag}
\DeclareMathOperator{\U}{U}
\DeclareMathOperator{\Id}{Id}
\DeclareMathOperator{\Tr}{Tr}
\DeclareMathOperator{\tr}{tr}
\DeclareMathOperator{\vol}{vol}
\DeclareMathOperator{\rk}{rk}
\DeclareMathOperator{\Ric}{Ric}
\def\XXint#1#2#3{{\setbox0=\hbox{$#1{#2#3}{\int}$}
\vcenter{\hbox{$#2#3$}}\kern-.5\wd0}}
\newcommand{\id}{\mathrm{id}}
\newcommand{\Aut}{\mathrm{Aut}}
\newcommand{\Hom}{\mathrm{Hom}}
\newcommand{\Ker}{\text{Ker}}
\newtheorem{thm}{Theorem}[section]
\newtheorem{theorem}[thm]{Theorem}
\newtheorem{prop}[thm]{Proposition}
\newtheorem{cor}[thm]{Corollary}
\newtheorem{lem}[thm]{Lemma}
\newtheorem{lemma}[thm]{Lemma}
\newtheorem{claim}[thm]{Claim}
\theoremstyle{remark}
\newtheorem{remark}[thm]{Remark}
\newtheorem{rem}[thm]{Remark}
\theoremstyle{definition}
\newtheorem{definition}[thm]{Definition}
\newtheorem{defn}[thm]{Definition}
\newcommand{\eproof}{\begin{flushright} $\square$ \end{flushright}}
\let\eproof\endproof 
\newcommand{\cO}{{\mathcal O}}
\newcommand{\bC}{{\mathbb C}}
\newcommand{\bR}{{\mathbb R}}
\newcommand{\bT}{{\bar T}}
\newcommand{\Z}{{\mathbb Z}}
\newcommand{\bZ}{\Z{}}
\newcommand{\D}{{\mathcal D}}
\newcommand{\ra}{\mathop{\fam0 \rightarrow}\nolimits}
\newcommand{\cH}{ {\mathcal H}}
\newcommand{\T}{ {\mathcal T}}
\newcommand{\V}{ {\mathcal V}}\newcommand{\N}{ {\mathbb N}}
\renewcommand{\L}{{\mathcal L}}
\renewcommand{\P}{ {\mathbb P}}
\newcommand{\tG}{ {\tilde G}}
\newcommand{\tW}{ {\tilde W}}
\newcommand{\s}{\sigma}
\newcommand{\Nablat}{{\bnabla}^{t}}
\providecommand{\bracket}[1]{\left \langle #1 \right \rangle}
\newcommand{\bP}{\mathbb{P}}
\newcommand{\lie}{\mathfrak}
\begin{document}
\title[Unitarity of the Hitchin connection]{Mapping  class group invariant unitarity of the Hitchin connection over Teichmüller space}
\author{Jørgen Ellegaard Andersen}
\address{Center for Quantum Geometry of Moduli
  Spaces\\ 
  University of Aarhus\\
  DK-8000, Denmark}

\email{andersen@qgm.au.dk}
\thanks{Supported in part by the center of excellence grant "Center for Quantum Geometry of Moduli Space" from the Danish National Research Foundation.}

\maketitle
\begin{abstract}
  We provide a geometric construction of the unitary structure which is projectively preserved by the Hitchin connection. We analyze the asymptotic behavior of it and we establish that it is uniformly in the level equivalent to the Hermitian structure induced by the $L_2$ inner product on smooth sections.
\end{abstract}
\section{Introduction}
Let $\Sigma$ be a closed surface of genus $g > 1$ and choose a point $p$ on $\Sigma$. Let $\Gamma$ be the mapping class group of $\Sigma$. We will denote the moduli space of flat $\SU(2)$-connections on $\Sigma - \{p\}$ with holonomy $-\Id \in \SU(2)$ around $p$ by $M$. It is well known that $M$ carries the Goldman symplectic structure $\omega$, which is determined by choosing an invariant inner product on the Lie algebra of $\SU(2)$. For the appropriate choice of scaling of this inner product we get that the class of $\omega$ generates $H^2(M,\bbZ)$. Let now $(\calL,\nabla,\langle\cdot,\cdot\rangle)$ be a prequantum line bundle over $(M,\omega)$, i.e. the curvature of $\nabla$ is the symplectic form
\begin{align*}
		F_\nabla = -i\omega.
\end{align*}
It is well known that $\Gamma$  acts by symplectomorphisms on $(M,\omega)$ and that this action can be lifted to an action of $\Gamma$ on $\L$ which preserves $\nabla$ and $\langle \cdot,\cdot \rangle$ (see e.g. \cite{Fr} and \cite{A1}). There is a very natural $\Gamma$-equivariant family of complex structures parametrized by Teichmüller space $\calT$ on $M$. Suppose $\sigma \in \calT$ is a complex structure on $\Sigma$. Then we can consider the moduli space of stable holomorphic bundles of rank $2$ and determinant isomorphic to $[p]$ on the Riemann surface $\Sigma_\sigma$. This moduli space is naturally a complex manifold $M_\sigma$ and by the theorem of Narasimhan and Seshadri, we get a natural diffeomorphism of the underlying smooth manifold of $M_\sigma$ to $M$. This structure in fact depends holomorphically on $\sigma \in \calT$. The complex structure on $M_\sigma$ combines with the connection $\nabla$ to produce the structure of a holomorphic line bundle on $\L$ over $M_\sigma$, and one gets a vector bundle $H^{(k)}$ over $\calT$, with fiber at $\sigma$ given by
\begin{align*}
	H_\sigma^{(k)} = H^0(M_\sigma,\calL^k).
\end{align*}
There is a natural holomorphic structure in the bundle $H^{(k)}$ over $\calT$. The main result pertaining to this bundle is:
\begin{thm}[Axelrod, Della Pietra and Witten; Hitchin]\label{MainGHCI}
  The bundle $H^{(k)}$ suppors a natural projectively flat $\Gamma$-invariant connection $\bnabla$.
\end{thm}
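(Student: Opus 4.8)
The plan is to follow Axelrod--Della Pietra--Witten and Hitchin: realise $\bnabla$ as a correction of the obvious ``trivial'' connection on a large ambient bundle, where the correction is a fibrewise differential operator tuned exactly so as to preserve holomorphicity. First I would embed $H^{(k)}$ into the trivial bundle $\tilde H^{(k)} := \calT\times C^\infty(M,\calL^k)$, which carries the trivial connection $\nabla^t$. A smooth section of $H^{(k)}$ is a family $\sigma\mapsto\phi_\sigma$ with $\dbar_\sigma\phi_\sigma=0$, where $\dbar_\sigma$ is the Dolbeault operator of $\calL^k$ on $M_\sigma$. Differentiating this equation along a tangent vector $V$ on $\calT$ shows that $\nabla^t_V\phi$ fails to lie in $H^{(k)}$ only through the first-order term $(\partial_V\dbar_\sigma)\phi$, i.e.\ the contraction of the Kodaira--Spencer variation $\partial_V J$ with the $(1,0)$ part of the prequantum connection. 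Hence the task is to find $u\in\Omega^1\bigl(\calT,\calD(M,\calL^k)\bigr)$ so that $\bnabla:=\nabla^t+u$ sends fibrewise-holomorphic sections to fibrewise-holomorphic sections, i.e.\ $\dbar_\sigma\bigl(u(V)\phi\bigr)$ cancels $(\partial_V\dbar_\sigma)\phi$ on $H^{(k)}_\sigma$.

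Next I would introduce the geometric ingredients. Dualising $\partial_V J$ by $\omega$ yields a symmetric bivector field $G(V)$ of type $(2,0)$ on $M_\sigma$, and from it the second-order operator $\Delta_{G(V)}$ obtained by contracting $G(V)$ against two $(1,0)$ covariant derivatives, formed from the prequantum connection on $\calL^k$ and the Levi--Civita connection of the K\"ahler metric on $M_\sigma$ determined by $\omega$ and $J_\sigma$. One then makes the Hitchin ansatz
\begin{align*}
u(V)=\frac{1}{2k+n_0}\Bigl(\tfrac12\Delta_{G(V)}+\nabla_{G(V)\,dF_\sigma}+(\text{zeroth order in }F_\sigma)\Bigr),
\end{align*}
where $F_\sigma$ is a Ricci potential, $\Ric(\omega_\sigma)=n_0\,\omega_\sigma+2i\,\partial_\sigma\dbar_\sigma F_\sigma$. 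Such an $F_\sigma$ exists because $c_1(M_\sigma)$ is a fixed integral multiple $n_0$ of $[\omega/2\pi]$ — forced by $\Pic(M_\sigma)\cong\bbZ$ together with the fact that $[\omega]$ generates $H^2(M,\bbZ)\cong\bbZ$ — and it can be chosen to depend smoothly on $\sigma$; since only $dF_\sigma$ enters, the additive ambiguity is harmless.

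The heart of the argument is the computation of $\dbar_\sigma\circ u(V)$ on $H^{(k)}_\sigma$. Commuting $\dbar_\sigma$ through $\Delta_{G(V)}$, the leading contribution is produced by the prequantum curvature identity $F_{\nabla^k}=-ik\omega$ (this is what gives the $k$ in the denominator); the remaining first-order pieces are absorbed by the $\nabla_{G(V)\,dF_\sigma}$ term via the Bochner--Ricci identities (this is what produces the $n_0$, using $\Ric(\omega_\sigma)=n_0\omega_\sigma+2i\partial\dbar F_\sigma$); and the potential anti-holomorphic leftover vanishes by the rigidity of the family $\{J_\sigma\}$ on the moduli space — the statement that $\dbar_\sigma G(V)$ is suitably controlled — together with $H^1(M,\bbR)=0$ (the moduli space $M$ is simply connected). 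Matching coefficients fixes the denominator $2k+n_0$ and shows that $\bnabla$ restricts to a genuine connection on $H^{(k)}$, compatible with its holomorphic structure.

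Finally I would compute the curvature $F_{\bnabla}$, a priori a two-form on $\calT$ valued in differential operators on $M$: using flatness of $\nabla^t$, the explicit shape of $u$, and the same K\"ahler/Chern-class relations (which render the relevant potentials constant along the fibres to the order that matters), all terms of positive differential order cancel, leaving a scalar two-form times the identity, so $\bnabla$ is projectively flat. For $\Gamma$-invariance one observes that every ingredient — $G(V)$, the operator $\Delta_{G(V)}$, and the Ricci potential $F_\sigma$ up to an additive constant — is manufactured canonically from the $\Gamma$-equivariant data $(M,\omega,\calL,\nabla)$ and the $\Gamma$-equivariant family $\sigma\mapsto J_\sigma$; hence $u$, and therefore $\bnabla$, intertwines the $\Gamma$-action and is natural. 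I expect the decisive obstacle to be the holomorphy-preservation computation of the third paragraph: it is where the curvature of $\calL^k$, the Levi--Civita connection on $TM_\sigma$, the rigidity of the family, and the Fano-type identity $c_1(M_\sigma)=n_0[\omega/2\pi]$ must all conspire to give a clean cancellation with the specific constant $2k+n_0$; the curvature computation in the last step is of the same flavour but considerably more forgiving once that identity is in hand.
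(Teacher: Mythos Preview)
Your proposal is correct and follows essentially the same route as the paper's Section~\ref{ghc}: embed $H^{(k)}$ in the trivial $C^\infty(M,\calL^k)$-bundle, differentiate the holomorphicity condition to obtain the defect equation \eqref{eqcond}, and solve it with the ansatz \eqref{equ}--\eqref{eqo} built from $\Delta_{G(V)}$, $\nabla_{G(V)dF}$ and the Ricci potential, invoking rigidity of the family, $c_1(M)=n[\omega/2\pi]$, and $H^1(M,\bbR)=0$ exactly as you do. The only notable difference is that the paper does not rederive the projective flatness of the curvature in this section but attributes it to \cite{ADW} and \cite{H}, whereas you sketch the curvature cancellation; your outline of that step is accurate in spirit.
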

This is a result proved independently by Axelrod, Della Pietra and Witten \cite{ADW} and by Hitchin \cite{H}. In Section~\ref{sect2}, we review our differential geometric construction of the connection $\bnabla$ in the general setting discussed in \cite{A9}.
\begin{defn}
  For a given $k \in \N$, a Hermitian structure $(\cdot,\cdot)$ on $H^{(k)}$ is said to be \emph{projectively preserved} by the Hitchin connection, if there exists a $1$-form $\alpha \in \Omega^1(\calT)$ such that for all vector fields $V$ on $\calT$ and all $s_1,s_2 \in C^\infty(\calT,H^{(k)})$, we have that
  \begin{align*}
		V[(s_1,s_2)] - (\bnabla_V s_1,s_2) - (s_1,\bnabla_V s_2) = \alpha(V)(s_1,s_2).
  \end{align*}
  Two Hermitian structures on $H^{(k)}$ are called \emph{projectively equivalent} if there exists a smooth function $c$ defined on $\calT$ such that $c \Id$ induces an isometry between the two structures. A \emph{projective Hermitian structure} is by definition an equivalence class of a Hermitian structure on $H^{(k)}$.
\end{defn}
\begin{rem}
  We observe that a projective Hermitian structure $(\cdot,\cdot)$ on $H^{(k)}$ induces a Hermitian structure on $\End(H^{(k)})$ and that two Hermitian structures on $H^{(k)}$ induces the same Hermition structure on $\End(H^{(k)})$ if and only if they are equivalent. Moreover, a Hermitian structure on $H^{(k)}$ is projectively preserved by the Hitchin connection if and only if the induced Hermitian structure on $\End(H^{(k)})$ is preserved by the connection in $\End(H^{(k)})$ induced by the Hitchin connection.
\end{rem}
We construct in this paper, for each $k$, a specific projective Hermitian structure $(\cdot,\cdot)^{(k)}$ in $H^{(k)}$, which is projectively preserved by the Hitchin connection $\bnabla$. This structure is determined by having a certain asymptotics at particular boundary points of Teichmüller space, which corresponds to pair of pants decompositions of the surface $\Sigma$ . Let us now discuss this asymptotics. First we review the constructions of \cite{JW} and the degeneration result of \cite{A3}.

Suppose $P$ is a pair of pants decomposition of $\Sigma$. By mapping a flat $\SU(2)$-connection to the traces of its holonomy around each of the curves in $P$, we get a smooth map $h_P : M \to [-2,2]^{3g-3}$. The fibers of this map are the so-called Jeffrey--Weitsman real polarization $F_P$ on the moduli space $M$. The fibers over the part of the image which is contained in $(-2,2)^{3g-3}$ are Lagrangian sub-tori of $M$. Fibers which map to the boundary of the image $h_P(M) \subset [-2,2]^{3g-3}$ are singular. We will give a precise description of them in Section~\ref{sect3}.

The geometric quantization of the moduli space $M$ with respect to the real polarizations $F_P$ was studied by Jeffrey and Weitsman in \cite{JW}. In general, when one quantizes a compact symplectic manifold with respect to a real polarization with compact leaves, one needs to consider distributional sections of the prequantum line bundle, which are covariant constant along the polarization (see e.g. \cite{Woodhouse}, \cite{A3} and \cite{A1}). One finds that these distributional sections are supported on the so-called Bohr--Sommerfeld fibers of the polarizations.
\begin{defn}
  Let $H_P^{(k)}$ denote the vector space of distributional sections of $\calL^k$ over $M$, which are covariant constant along the directions of $F_P$. A leaf $L$ of $F_P$, i.e. a fiber of $h_P$, is called a level $k$ Bohr--Sommerfeld fiber if $(\calL^k,\nabla)|_L$ is trivial. We denote the set of level $k$ Bohr--Sommerfeld fibers by $B_k(P)$.
\end{defn}
We observe that if $L$ is a leaf of $F_P$, then $L$ is a level $k$ Bohr--Sommerfeld fiber if and only if $(\calL^k,\nabla)|_L$ admits a covariant constant section defined on all of $L$. By choosing a covariant constant section of $(\calL^k,\nabla)|_L$ for each $L \in B_k(P)$ and considering them as distributional section of $\calL^k$ over $M$, we obtain a basis for $H_P^{(k)}$. The main result of \cite{JW} is that
\begin{align*}
	\dim H_\sigma^{(k)} = \dim H_P^{(k)}
\end{align*}
for all $\sigma \in \calT$ and every pair of pants decomposition of $\Sigma$.

Consider a family $\sigma_t$, $t \in \bbR_+ \cup \{0\}$ obtained from some arbitrary starting point $\sigma_0 \in \calT$, such that $\sigma_t$ is obtained from $\sigma_0$ by insertion of a flat cylinder of length $t$ into the cut of $\Sigma$ along each of the curves in $P$. We have the following theorem from \cite{A2}.
\begin{thm}
  The complex polarizations on $M$ induced from $\sigma_t$ converge to $F_P$ as $t$ goes to infinity.
\end{thm}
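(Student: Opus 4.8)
The plan is to reduce the statement to a local, model computation near each curve in the pants decomposition $P$, and then to patch the local estimates together. First I would make precise what "convergence of polarizations" means: a complex polarization on $M$ is a Lagrangian subbundle $\calP_t \subset TM\otimes\bbC$ spanned at each point by the $(0,1)$-vectors for the complex structure $I_{\sigma_t}$, and we wish to show that $\calP_t \to F_P\otimes\bbC$ pointwise on the open dense set where $F_P$ is a genuine Lagrangian torus fibration (and, with care, distributionally across the singular fibers). Equivalently, writing the complex structure on $M_{\sigma_t}$ in a local Darboux chart adapted to the action–angle coordinates $(I_j,\theta_j)$ supplied by $h_P$, one must show that the matrix of $I_{\sigma_t}$ converges to the standard matrix pairing $\partial/\partial\theta_j$ with $\partial/\partial I_j$, i.e. that the "angle" directions become the real directions of the limiting polarization.

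The key geometric input is the identification of $M_{\sigma_t}$, for $t$ large, with a moduli space built by gluing: inserting a flat cylinder of length $t$ along a curve $c_j$ in $P$ corresponds on the bundle side to a plumbing/clutching construction, and the holonomy trace $\tr \Hol_{c_j}$ becomes, in the long-neck limit, the modulus of a gluing parameter. I would therefore proceed curve by curve. Near a single curve $c_j$, the local model for $M_{\sigma_t}$ is the moduli space of flat $\SU(2)$-connections on a cylinder with fixed boundary holonomy — essentially a neighborhood of a point in $[-2,2]$ crossed with the moduli on the two pieces — and the complex structure degenerates in the familiar way that a long neck forces: holomorphic sections concentrate, the Kähler metric in the neck direction blows up linearly in $t$, and the complex structure rotates the "length-of-holonomy" coordinate toward the symplectic-dual of the "twist" coordinate. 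This is precisely the content of the degeneration analysis of \cite{A2} (and is parallel to the toroidal degenerations in \cite{A3}); I would invoke those estimates rather than rederive them.

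The main steps in order: (1) fix action–angle coordinates for $F_P$ on the open locus $h_P^{-1}((-2,2)^{3g-3})$, using the Goldman torus action generated by the trace functions; (2) express $I_{\sigma_t}$ in these coordinates via the gluing description of $M_{\sigma_t}$, isolating the neck contribution for each $c_j$; (3) show each neck contribution, after rescaling, converges so that the $(0,1)$-subbundle tends to $\spn_\bbC\{\partial/\partial\theta_j\}$, uniformly on compact subsets of the smooth locus; (4) assemble these into convergence of the full polarization $\calP_t \to F_P\otimes\bbC$; and (5) treat the singular fibers over $\partial h_P(M)$ by a separate matching argument, showing the limiting polarization there agrees with the singular leaves of $F_P$ described in Section~\ref{sect3}. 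The hard part will be step (3) together with step (5): controlling the degeneration \emph{uniformly} — so that the convergence is not merely pointwise but behaves well enough near the discriminant locus to be useful for the subsequent asymptotic comparison of Hermitian structures — and handling the non-generic (nodal/singular) Bohr–Sommerfeld fibers, where the action coordinates break down and one must work directly with the plumbing model. I expect this to rest essentially on the analytic estimates already established in \cite{A2}, so the proof here is largely a matter of citing and organizing that degeneration result in the language of polarizations.
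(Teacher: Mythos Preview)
The paper does not prove this theorem at all: it is stated in the introduction with the attribution ``We have the following theorem from \cite{A2}'', and the generalized version (Theorem~\ref{thm10}) is likewise introduced as ``an immediate generalization of Theorem~6.2 of \cite{A2}''. No argument is given in the present paper; the result is imported wholesale from the author's earlier work \cite{A2} (and \cite{A3}).

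Your proposal is therefore not wrong so much as misdirected. You have written a plausible outline of how the proof in \cite{A2}/\cite{A3} might go --- action--angle coordinates from the Goldman flows, neck-stretching analysis curve by curve, rotation of the $(0,1)$-subbundle toward the angle directions --- and you even conclude, correctly, that ``the proof here is largely a matter of citing and organizing that degeneration result''. That last sentence is the entire content of what this paper does with the statement. The five-step programme you lay out is reasonable as a summary of the degeneration analysis in those earlier papers, but for the purposes of the present paper you should simply cite the result and move on; the detailed local model and the uniformity concerns you raise belong to \cite{A2} and \cite{A3}, not here.
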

Let
\begin{align*}
	P_t(\sigma_0,P) : H_{\sigma_0}^{(k)} \to H_{\sigma_t}^{(k)}
\end{align*}
be the parallel transport with respect to the Hitchin connection in $H^{(k)}$ over $\calT$ along the curve $(\sigma_s)$, $s \in [0,t]$. In Section~\ref{sect3}, we will show that there exists a limiting linear map
\begin{align}
  \label{eq1}
	P_\infty(\sigma_0,P) : H_{\sigma_0}^{(k)} \to H_P^{(k)}.
\end{align}
We further establish the following result in Section~\ref{sect4}.
\begin{thm}
  \label{thm3}
  The map \eqref{eq1} is an isomorphism.
\end{thm}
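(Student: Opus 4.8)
The plan is to combine the dimension count of Jeffrey and Weitsman with an explicit description of the limiting map. By the main result of \cite{JW} recalled above we have $\dim H_{\sigma_0}^{(k)} = \dim H_P^{(k)} = \card B_k(P)$, so it is enough to prove that $P_\infty(\sigma_0,P)$ is injective. Since the parallel transport $P_t(\sigma_0,P)$ is an isomorphism for every finite $t$, the only possible failure in the limit is that a nonzero holomorphic section is carried to a family of holomorphic sections whose mass escapes the Bohr--Sommerfeld fibers, leaving the zero distribution.

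The first step is to make \eqref{eq1} concrete, using the degeneration analysis of Section~\ref{sect3} (following \cite{A2} and \cite{A3}). Near each fiber $L \in B_k(P)$ one chooses action--angle coordinates adapted to $F_P$ together with a local trivialisation of $(\calL^k,\nabla)$ in which the covariant constant section along $L$ is constant. In these coordinates $P_t(\sigma_0,P)(s)$ is, for large $t$, a Gaussian wave packet transverse to $L$, of the form $c_L(s)\, e^{-a_t\abs{y}^2/2}$ times the constant section, where $y$ is the transverse (action) variable, $a_t \to \infty$ at a definite rate, and $c_L(s)\in\bbC$ is the leading coefficient. The point of Section~\ref{sect3} is that this leading behaviour exists for every $s$ and that
\[
  P_\infty(\sigma_0,P)(s) = \sum_{L\in B_k(P)} c_L(s)\,\delta_L ,
\]
where $\delta_L$ is the distributional section carried by the covariant constant section on $L$. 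Thus each $c_L : H_{\sigma_0}^{(k)} \to \bbC$ is linear, and injectivity of $P_\infty(\sigma_0,P)$ is equivalent to linear independence of the $\card B_k(P)$ functionals $\{c_L\}_{L\in B_k(P)}$.

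To get at this I would run a Laplace/stationary phase computation for the $L^2$ pairing. Let $\langle\cdot,\cdot\rangle_t$ denote the $L^2$ inner product on $H_{\sigma_t}^{(k)}$ built from the Liouville volume and the metric on $\calL^k$, rescaled by the $t$-dependent factor $\lambda_t$ (the power of $a_t$ that normalises the transverse Gaussian integral). Since the wave packets attached to distinct fibers have supports that are disjoint up to exponentially small error, integrating the pointwise norm of $P_t(\sigma_0,P)(s)$ over $M$ localises as $t\to\infty$ to a sum of transverse Gaussian integrals times integrals of the constant profile over the fibers, so that
\[
  \lim_{t\to\infty}\ \lambda_t\,\langle P_t(\sigma_0,P)(s),\ P_t(\sigma_0,P)(s)\rangle_t
  \ =\ \sum_{L\in B_k(P)} \abs{c_L(s)}^2\,\Vol(L).
\]
The right-hand side is exactly $\Abs{P_\infty(\sigma_0,P)(s)}^2$ for the natural Hermitian structure on $H_P^{(k)}$ making $\{\delta_L\}$ orthogonal with $\Abs{\delta_L}^2 = \Vol(L)$, and its null space is $\bigcap_L\ker c_L$. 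Hence injectivity of $P_\infty(\sigma_0,P)$ follows once one shows that the left-hand side is a \emph{positive definite} form on $H_{\sigma_0}^{(k)}$ --- equivalently, that the rescaled norm $\lambda_t^{1/2}\Abs{P_t(\sigma_0,P)(s)}_t$ stays bounded below by a fixed positive multiple of $\Abs{s}$, uniformly in $t$.

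This uniform lower bound is the main obstacle: a priori the rescaled norm could decay to $0$ as $t\to\infty$. I expect to control it through the differential equation satisfied by $t\mapsto\langle P_t(\sigma_0,P)(s), P_t(\sigma_0,P)(s)\rangle_t$, whose right-hand side is driven by the term measuring the failure of the Hitchin connection to be $L^2$-unitary (the second-order operator in the description of \cite{A9}) together with the variation of the $L^2$ metric; along the degenerating family $(\sigma_t)$ I expect these terms to be integrable in $t$ after the rescaling by $\lambda_t$, so that the rescaled norm tends to a finite, strictly positive limit. An equivalent route is to apply $P_t(\sigma_0,P)^{-1}$ to the standard basis of peaked sections $s_{L,t}\in H_{\sigma_t}^{(k)}$ and to use the same estimates to show that $P_t(\sigma_0,P)^{-1}(s_{L,t})$ converges in the finite-dimensional space $H_{\sigma_0}^{(k)}$; the limit vectors then form a basis mapped by $P_\infty(\sigma_0,P)$ onto the Bohr--Sommerfeld basis $\{\delta_L\}$, which gives surjectivity and hence, by the Jeffrey--Weitsman count, the theorem.
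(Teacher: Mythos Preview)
Your main route --- proving injectivity via a uniform lower bound on the rescaled $L^2$ norm --- is left incomplete: you explicitly flag the lower bound as ``the main obstacle'' and only \emph{expect} the defect terms of the Hitchin connection to be integrable along the degenerating family after rescaling. Nothing in the paper supplies that estimate, and without it the argument does not close; the Hitchin connection is genuinely non-unitary for the $L^2$ structure, and controlling the cumulative drift over an infinite ray requires more than the pointwise asymptotics of Section~\ref{sect3}.

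The paper sidesteps this entirely by taking, in effect, your ``equivalent route'' from the final paragraph. For each $s_P\in H_P^{(k)}$ it introduces the state $s_{P,\sigma_t}\in H_{\sigma_t}^{(k)}$ as the $L^2$-representative of the functional
\[
  s\ \longmapsto\ \sum_{b\in B_k(P)}\int_{h_P^{-1}(b)}\langle s,s_P\rangle\,\Vol_{\sigma_t,b},
\]
and proves, by appealing to the large-parameter asymptotics of \cite{A11} (the degeneration being, after a local coordinate change, the same limit), that $P_\infty(\sigma_t,P)(s_{P,\sigma_t})\to s_P$. Since $P_\infty(\sigma_0,P)=P_\infty(\sigma_t,P)\circ P_t(\sigma_0,P)$ and $P_t(\sigma_0,P)$ is an isomorphism for every finite $t$, the image of $P_\infty(\sigma_0,P)$ coincides with that of $P_\infty(\sigma_t,P)$; the limit statement then forces this fixed finite-dimensional image to be all of $H_P^{(k)}$, and the Jeffrey--Weitsman dimension count finishes. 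So the paper proves surjectivity directly and never needs the uniform norm control you were reaching for; if you want a complete argument, drop the injectivity attempt and carry out the peaked-section construction you sketched at the end.
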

Jeffrey and Weitsman also describe the set $B_k(P)$ explicitly in \cite{JW} as follows. We can associate to $P$ a trivalent graph $\Sigma_P$ as follows. Each pair of pants is represented by a vertex and two vertices are connected by an edge if they are adjacent on the surface $\Sigma$.

By the definition of $h_P$ above, we see that the set of leaves of $F_P$ is identified with a subset of the set of maps from the set of edges $E_{\Gamma_P}$ of $\Gamma_P$ to $[-2,2]$. By identifying $[0,k]$ with $[-2,2]$ using the bijection
\begin{align*}
	t \mapsto 2\cos(\pi t/k),
\end{align*}
we can consider the set of leaves of $F_P$ as a subset of the set of maps from $E_{\Gamma_P}$ to $[0,k]$. For each vertex $v$ in the set of vertices $V_{\Gamma_P}$ in $\Gamma_P$, we let $e_1(v),e_2(v)$ and $e_3(v)$ be the three edges emanating from $v$.

\begin{defn}
  For each pair of pants decomposition $P$ of $\Sigma$,
  \begin{align*}
		L_k(P) = \left\{ l : E_{\Gamma_P} \to \{0,\dots,k\} \relmiddle| \begin{aligned} &\text{$l(e) \in 2\bbZ$ if $e \in E_{\Gamma_P}$ is separating} \\ &\text{$(l(e_1(v)),l(e_2(v)),l(e_3(v)))$ is admissible $\forall v \in V_{\Gamma_P}$} \end{aligned} \right\},
  \end{align*}
  where a triple of integers $(l_1,l_2,l_3)$ is said to be \emph{admissible} if the following three conditions are satisfied.
  \begin{gather*}
		\abs{l_1 - l_2} \leq l_3 \leq l_1 + l_2, \\
		l_1+l_2+l_3 \leq 2k, \\
		l_1+l_2+l_3 \in 2\bbZ.
  \end{gather*}
\end{defn}
Theorem 8.1 in \cite{JW} states that
\begin{thm}[Jeffrey--Weitsman]
  Under the above identification we have that
  \begin{align*}
		B_k(P) = L_k(P).
  \end{align*}
\end{thm}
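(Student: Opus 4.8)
The statement to prove is the Jeffrey--Weitsman description $B_k(P) = L_k(P)$, identifying the level $k$ Bohr--Sommerfeld fibers of the real polarization $F_P$ with the set of admissible labelings of the trivalent graph $\Gamma_P$. Since this is Theorem 8.1 of \cite{JW}, the author will presumably just cite it; but here is how I would reconstruct the proof.

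\medskip

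\noindent\textbf{Proof proposal.}

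The plan is to reduce the global Bohr--Sommerfeld condition on $M$ to a collection of local conditions attached to the curves of $P$ and to the pairs of pants, and then to match these against the defining conditions of $L_k(P)$. First I would recall that a leaf $L$ of $F_P$ over a point $\mathbf{t} = (t_e)_{e \in E_{\Gamma_P}} \in [0,k]^{3g-3}$ is, over the interior of the image, a Lagrangian torus, and that the Bohr--Sommerfeld condition ``$(\calL^k,\nabla)|_L$ is trivial as a flat bundle'' is equivalent to the vanishing in $\bbR/\bbZ$ of the holonomies of $\nabla^{\otimes k}$ around a basis of $H_1(L;\bbZ)$. The natural basis is given by the Goldman twist flows along the curves of $P$: each curve $c_e \in P$ generates a circle action on (an open dense subset of) $M$ whose orbits foliate the torus $L$. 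The key computational input, due to Goldman and used by Jeffrey--Weitsman, is that the action variable conjugate to the twist-flow angle variable along $c_e$ is, up to normalization and an additive constant, the function $\theta_e$ with $2\cos(\pi\theta_e/k) = h_{P,e}$; equivalently the period of the Liouville form over the corresponding cycle is an affine function of $t_e$. Hence the level $k$ holonomy around the $e$-th cycle is $\exp(\pi i\, (\text{affine function of } t_e))$, and triviality for all cycles forces each $t_e$ to be an integer, i.e. $l(e) := t_e \in \{0,\dots,k\}$. This already shows $B_k(P) \subseteq \{l : E_{\Gamma_P} \to \{0,\dots,k\}\}$.

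Next I would pin down the additive constants, i.e. the precise residues, by examining a single pair of pants $\Pi$ with boundary curves carrying labels $(l_1,l_2,l_3)$. The relevant fact is that $M$ restricted to $\Pi$ is built from the moduli space of flat $\SU(2)$-connections on a three-holed sphere with prescribed boundary holonomy-conjugacy classes, and this space is nonempty precisely when the three conjugacy classes (angles $\pi l_i/k$) satisfy the spherical-triangle inequalities, which translate exactly into $|l_1 - l_2| \le l_3 \le l_1 + l_2$ and $l_1 + l_2 + l_3 \le 2k$. So the fiber $L$ over $\mathbf{t}$ is nonempty (hence can possibly be Bohr--Sommerfeld) only when each vertex triple satisfies these two inequalities. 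The remaining parity conditions --- $l_1 + l_2 + l_3 \in 2\bbZ$ at each vertex, and $l(e) \in 2\bbZ$ for $e$ separating --- come from the $w_2$/lifting obstruction: recall $M$ was defined with holonomy $-\Id$ around the marked point $p$, so the product of the ``signs'' $(-1)^{l_i}$ around each pair of pants must be compatible with that global constraint, and for a separating curve the two sides must individually carry the basepoint sign consistently, forcing $l(e)$ even. Carefully, this is where one uses that $\omega$ is normalized to generate $H^2(M;\bbZ)$ and that $\calL$ is the corresponding generator, so that the prequantum holonomies are computed on the nose rather than up to an overall constant.

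Finally, for the reverse inclusion I would argue that whenever $l \in L_k(P)$ the torus $L = h_P^{-1}(\mathbf{t})$ is a smooth Lagrangian torus (the strict triangle inequalities may degenerate, in which case $L$ is one of the singular fibers described in Section~\ref{sect3}, and one checks the Bohr--Sommerfeld condition there directly on the reduced space), and that the holonomy computation of the first paragraph, together with the constants fixed in the second, gives holonomy exactly $1$ around every generating cycle; hence $(\calL^k,\nabla)|_L$ is trivial and $L \in B_k(P)$. Combining the two inclusions yields $B_k(P) = L_k(P)$.

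\medskip

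\noindent The main obstacle is the bookkeeping of additive constants in the action variables: getting the holonomy to come out as $\exp(\pi i (t_e + \text{const}))$ with the \emph{correct} constant --- which is where the parity conditions and the $-\Id$ boundary condition at $p$ enter --- rather than merely showing that $t_e$ must be rational or half-integral. This requires a careful normalization of the Liouville form relative to the prequantum data and a precise description of the singular fibers over the boundary strata of $h_P(M)$, both of which I would import from \cite{JW} and \cite{A3}.
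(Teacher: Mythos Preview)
You correctly anticipated the situation: the paper does not prove this statement at all, but merely cites it as Theorem~8.1 of \cite{JW}. Your reconstruction of the Jeffrey--Weitsman argument---action variables from Goldman twist flows giving the integrality of the $l(e)$, the spherical triangle inequalities from nonemptiness of the three-holed sphere moduli, and the parity constraints from the $-\Id$ holonomy at $p$---is a faithful outline of how that proof actually proceeds, so there is nothing to compare on the level of argument; the paper simply defers to the reference, and your sketch supplies what the reference contains.
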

We recall that the Reshetikhin--Turaev TQFT assigns a Hermitian vector space to $\Sigma$, which given the pair of pants decomposition $P$ of $\Sigma$ is provided with a basis indexed exactly by $L_k(P)$ \cite{RT1} , \cite{RT2}, \cite{T}. We also refer to the skein theory model of Blanchet, Habegger, Masbaum and Vogel, \cite{BHMV1}, \cite{BHMV2}, \cite{B1}. We let the vector corresponding to $l \in L_k(P)$ be denoted by $v_l$. By Theorem  4.11 in \cite{BHMV1} we have that the basis is orthogonal and the norms are given by the following formula
\begin{align}
  \label{eq2}
	[v_l,v_l] = \eta^{1-g} \frac{\prod_{v \in V_{\Gamma_P}} \langle l(v) \rangle}{\prod_{e \in E_{\Gamma_P}} \langle l(e)\rangle},
\end{align}
where
\begin{align*}
	\eta = \sqrt{\frac{2}{r}} \sin(\pi /r)
\end{align*}
with $\langle j \rangle = (-1)^j [j+1]$ for any integer $j$, and for any triple of integers $(a,b,c)$,
\begin{align*}
	\langle a,b,c \rangle = (-1)^{\alpha+\beta+\gamma}\frac{[\alpha+\beta+\gamma+1]![\alpha]![\beta]![\gamma]!}{[a]![b]![c]!}
\end{align*}
with
\begin{align*}
	a = \beta+\gamma, \, b = \alpha + \gamma, \, c = \alpha + \beta, 
\end{align*}
and
\begin{align*}
[j] = \frac{\sin(j \pi/r)}{\sin(\pi/r)}.
\end{align*}
Furthermore, $r = k + 2$. We observe that (\ref{eq2}) is positive for all $l\in L_k(P)$. We now introduce an orthonormal basis $\tilde v_l$, $l\in L_k(P)$, given by

\begin{align*}
	\tilde v_l = \frac{v_l}{[v_l,v_l]^\frac12}.
\end{align*}
As will be demonstrated in this paper, the basis vector $\tilde v_l$ correspond to a covariant constant section of $\L^k$ of unit norm over the leaf of $F_P$ corresponding to $l$. We therefore define a Hermitian structure $(\cdot,\cdot)_P^{(k)}$ in $H_P^{(k)}$ as follows. Suppose $s_1,s_2 \in H_P^{(k)}$, then for each $L \in B_k(P)$ we have that $s_i$, $i = 1,2$, are covariant constant sections of $\calL^k|_L$. Hence we see that $\langle s_1, s_2 \rangle$ is constant along the leaves of $P$ in $B_k(P)$ and thus $\langle s_1, s_2 \rangle$ becomes a function on $B_k(P)$. Under the above identification of $B_k(P)$ with $L_k(P)$, we can thus interpret $\langle s_1, s_2 \rangle$ as a function defined on $L_k(P)$.
\begin{defn}
  For any $s_1,s_2 \in H_P^{(k)}$ we define
  \begin{align*}
		(s_1,s_2)_P^{(k)} = \sum_{l \in L_k(P)} \langle s_1, s_2 \rangle(l) .
  \end{align*}
\end{defn}
We observe that $(\cdot,\cdot)^{(k)}_P$ is positive definite. We proof the following theorem in Section \ref{sect7}.
\begin{thm}
\label{HS}
There is a unique projective Hermitian structure $(\cdot,\cdot)^{(k)}$ in $H^{(k)}$ which is projectively preserved by the Hitchin connection, projectively invariant under the mapping class group action and satisfies the following asymptotics: For any $\sigma_0 \in \calT$ and pair of pants decomposition $P$ of $\Sigma$, we have that
\begin{align*}
	P_\infty(\sigma_0,P) : (H_{\sigma_0}^{(k)},(\cdot,\cdot)_{\sigma_0}^{(k)}) \to (H_P^{(k)},(\cdot,\cdot)_P^{(k)})
\end{align*}
is a projective isometry.
\end{thm}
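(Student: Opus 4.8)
The plan is to produce a candidate structure by transport and then pin it down. Fix once and for all a point $\sigma_0 \in \calT$ and a pair of pants decomposition $P_0$ of $\Sigma$. By Theorem~\ref{thm3} the map $P_\infty(\sigma_0,P_0) \colon H_{\sigma_0}^{(k)} \to H_{P_0}^{(k)}$ is an isomorphism, so we may pull back the positive definite form $(\cdot,\cdot)_{P_0}^{(k)}$ to a positive definite Hermitian form on $H_{\sigma_0}^{(k)}$. Since $\bnabla$ is projectively flat (Theorem~\ref{MainGHCI}) and $\calT$ is contractible, the induced flat connection on $\bbP(H^{(k)})$ lets us transport this form, as a projective Hermitian structure, to all of $\calT$ in a path-independent way; call the result $(\cdot,\cdot)^{(k)}$. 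By construction it is projectively preserved by $\bnabla$, and using the compatibility $\bbP P_\infty(\sigma_0',P_0) = \bbP P_\infty(\sigma_0,P_0)\circ \bbP P_{\sigma_0'\to\sigma_0}$ of the limiting maps with Hitchin parallel transport (from the construction of $P_\infty$), it also has the prescribed asymptotics at $P_0$ from every base point. The real content of the existence statement is that $(\cdot,\cdot)^{(k)}$ has the prescribed asymptotics at \emph{every} pair of pants decomposition $P$. I would establish this by comparing $(\cdot,\cdot)^{(k)}$ with the Hermitian structure induced by the $L_2$ inner product on $C^\infty(M,\calL^k)$, which is manifestly natural in all of the data: a local analysis of the degeneration $\sigma_t \to F_P$ near the Bohr--Sommerfeld fibers shows that the limit of the $L_2$ structure along $P_t(\sigma_0,P)$ is, up to a projective factor, exactly $(\cdot,\cdot)_P^{(k)}$ — the Gaussian concentration transverse to a leaf $L \in B_k(P)$ produces precisely the transverse widths recorded by the factors $\langle l(v)\rangle$ and $\langle l(e)\rangle$ in \eqref{eq2}, and the factorization of these integrals over the trivalent graph $\Gamma_P$ reproduces the product form of \eqref{eq2}.

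For uniqueness, suppose $(\cdot,\cdot)^1$ and $(\cdot,\cdot)^2$ are projective Hermitian structures, each projectively preserved by $\bnabla$ and each satisfying the asymptotics at $(\sigma_0,P_0)$. Choosing representatives, write $(\cdot,\cdot)^1 = (A\,\cdot,\cdot)^2$ with $A$ a fiberwise positive self-adjoint endomorphism of $H^{(k)}$, well defined up to a positive function on $\calT$. Differentiating the two ``projectively preserved'' identities and subtracting gives $\hat\bnabla_V A = (\alpha_1-\alpha_2)(V)\,A$, where $\hat\bnabla$ is the connection on $\End(H^{(k)})$ induced by $\bnabla$; that is, $[A]$ is a parallel section of $\bbP\End(H^{(k)})$. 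Since the connection induced by a projectively flat connection on an endomorphism bundle is honestly flat and $\calT$ is simply connected, $[A]$ is determined by its value at $\sigma_0$. Now apply $P_\infty(\sigma_0,P_0)$: by hypothesis it carries both $(\cdot,\cdot)^1_{\sigma_0}$ and $(\cdot,\cdot)^2_{\sigma_0}$ to $(\cdot,\cdot)_{P_0}^{(k)}$ projectively, hence it conjugates $[A_{\sigma_0}]$ to $[\Id]$, and as it is an isomorphism (Theorem~\ref{thm3}) we conclude $[A_{\sigma_0}] = [\Id]$, so $(\cdot,\cdot)^1 = (\cdot,\cdot)^2$ as projective Hermitian structures. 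Note that this argument uses the asymptotics only at the single pair $(\sigma_0,P_0)$.

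Mapping class group invariance is then formal. For $\gamma \in \Gamma$, the structure $\gamma^*(\cdot,\cdot)^{(k)}$ is again projectively preserved by $\bnabla$, since $\bnabla$ is $\Gamma$-invariant by Theorem~\ref{MainGHCI}. It also satisfies the asymptotics at $(\sigma_0,P_0)$: the existence step gives $(\cdot,\cdot)^{(k)}$ the correct asymptotics at $(\gamma\sigma_0,\gamma P_0)$, and $\gamma$ carries $P_0$ to a pair of pants decomposition of the same combinatorial type (so \eqref{eq2} is unchanged) while intertwining $P_\infty(\gamma\sigma_0,\gamma P_0)$ with $P_\infty(\sigma_0,P_0)$ and $(\cdot,\cdot)_{\gamma P_0}^{(k)}$ with $(\cdot,\cdot)_{P_0}^{(k)}$; hence $\gamma^*(\cdot,\cdot)^{(k)}$ has the correct asymptotics at $(\sigma_0,P_0)$. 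By the uniqueness just proved, $\gamma^*(\cdot,\cdot)^{(k)} = (\cdot,\cdot)^{(k)}$ projectively, i.e. $(\cdot,\cdot)^{(k)}$ is projectively $\Gamma$-invariant.

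Thus, once the asymptotic normalization is in hand, the two remaining clauses of the theorem and the uniqueness are essentially formal consequences of projective flatness and Theorem~\ref{thm3}. The main obstacle is the asymptotic computation itself: identifying the $t\to\infty$ limit of the $L_2$ (equivalently, geometric) Hermitian structure under the pants degeneration with the explicit Reshetikhin--Turaev/skein-theoretic norm \eqref{eq2}, uniformly over all $P$. This requires the precise local model of $H^{(k)}$ near the singular and nonsingular Bohr--Sommerfeld fibers, control of the Gaussian transverse widths, and the gluing of these contributions along the pairs of pants; an alternative is to reduce the $P$-dependence to a compatibility check under the Moore--Seiberg moves between pants decompositions, verifying that \eqref{eq2} transforms consistently with the composition law for the limiting parallel transports $P_\infty(\sigma_0,\cdot)$.
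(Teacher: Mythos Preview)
Your construction of the candidate, the uniqueness argument, and the deduction of mapping class group invariance are all correct and match the paper's implicit logic: define $[\cdot,\cdot]^{(k)}_{P}$ by pullback via $P_\infty(\sigma_0,P)$, show it is projectively preserved by $\bnabla$ (the paper's Theorem~\ref{thm14}), and then everything reduces to showing that the structures coming from different pants decompositions $P$ agree projectively (the paper's Theorem~\ref{thm15}).

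The gap is in your primary proposal for this last step. You propose to compare with the $L_2$ structure and argue that its limit along the degeneration $\sigma_t\to F_P$ equals $(\cdot,\cdot)_P^{(k)}$ projectively. But even granting such an asymptotic identification, it would only tell you that $[\cdot,\cdot]^{(k)}_{P}$ coincides with a certain ``$L_2$ limit along the $P$-ray''; it says nothing about why the $L_2$ limits along the $P_0$-ray and the $P$-ray give the \emph{same} projective structure on $H^{(k)}_{\sigma_0}$. Since $\bnabla$ does not preserve $(\cdot,\cdot)_{L_2}$, these two limits are a priori unrelated, and the comparison with $L_2$ does not close the argument. (A secondary point: $(\cdot,\cdot)_P^{(k)}$ is the unweighted sum $\sum_{l}\langle s_1,s_2\rangle(l)$, not the form weighted by the quantities in \eqref{eq2}; the role of \eqref{eq2} is only that the RT-orthonormal basis corresponds to unit-norm covariant constant sections. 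So the Gaussian widths would have to come out \emph{independent} of the leaf up to an overall scale, which is far from obvious.) In the paper the $L_2$ comparison appears only in Section~\ref{sect8}, \emph{after} the projective structure is already shown to be well defined, and is used in the opposite direction: to analyze $G^{(k)}_\sigma$ once $(\cdot,\cdot)^{(k)}$ exists.

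What the paper actually does for the $P$-independence is exactly your ``alternative'': it reduces to elementary Moore--Seiberg moves and checks those one at a time. For the flip on a four-punctured sphere (Theorem~\ref{thm12}) the paper identifies the Hitchin connection on the relevant two-dimensional moduli space with the KZ/TUY connection and then invokes the results of \cite{AU1,AU2,AU3,AU4} that the corresponding Reshetikhin--Turaev $F$-move is an isometry; for the $S$-move on a once-punctured torus (Theorem~\ref{thm13}) it uses that the genus-zero data of a modular functor determines the $S$-matrix \cite{AU3}. So the substantive input is not a Gaussian computation but the identification of the degenerating Hitchin connection with conformal-blocks data and the known unitarity of the RT TQFT under elementary moves. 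Your proposal should promote the Moore--Seiberg reduction to the main argument and drop the $L_2$ route as the mechanism for $P$-independence.
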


We recall that we also have the $L^2$-Hermitian structure on $H^{(k)}$ given by
\begin{align*}
	(s_1,s_2)_{L^2}^{(k)} = \int_M \langle s_1, s_2 \rangle\frac{\omega^n}{n!},
\end{align*}
for any two sections $s_1$ and $s_2$ of $H^{(k)}$. We now wish to compare $(\cdot,\cdot)_{L^2}^{(k)}$ with $(\cdot,\cdot)^{(k)}$. Let $\{\cdot , \cdot\}_{L^2}^{(k)}$ respectively $\{\cdot,\cdot\}^{(k)}$ be the Hermitian structures induced on $\End(H^{(k)}) \isom H^{(k)} \otimes (H^{(k)})^*$ by $(\cdot,\cdot)^{(k)}_{L^2}$ respectively $(\cdot,\cdot)^{(k)}$.

We will show in Theorem~\ref{thm18} that the inner product $(s_1,s_2)^{(k)}$ has a representative of the following form:
\begin{thm}
  \label{thm6}
  There exist functions $G^{(k)} \in C^\infty(\calT,C^\infty(M))$, such that
  \begin{align*}
		(s_1,s_2)_\sigma^{(k)} = \int_M \langle s_1, s_2\rangle G_\sigma^{(k)} \frac{\omega^m}{m!}
  \end{align*}
  for $s_1 , s_2 \in H^0(M_\sigma, \calL^k)$, which has the asymptotic expansion
  \begin{align*}
		G_\sigma^{(k)} = \exp(-F_\sigma + O(1/k))
  \end{align*}
  for all $\sigma \in \calT$, where $F_\sigma \in C^\infty(M)$ is the Ricci potential for $(M_\sigma,\omega)$.
\end{thm}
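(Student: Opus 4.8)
The plan is to produce the Hermitian structure $(\cdot,\cdot)^{(k)}$ explicitly as an integral against a density $G^{(k)}_\sigma$ on $M$, and then to pin down $G^{(k)}_\sigma$ by forcing the projective flatness condition of the Hitchin connection to hold. Concretely, I would start from the $L^2$-structure, which is represented by the constant density $1$, and ask: for which positive $G_\sigma \in C^\infty(M)$ is the modified pairing $(s_1,s_2)_\sigma = \int_M \langle s_1,s_2\rangle G_\sigma \,\omega^m/m!$ projectively preserved by $\bnabla$? Using the explicit local form of the Hitchin connection recalled in Section~\ref{sect2} (of the type $\bnabla_V = V + u(V) + \text{(potential terms)}$, where $u(V)$ is a second-order operator built from the symplectic form, the family of complex structures and the Ricci potential $F_\sigma$), one differentiates $(s_1,s_2)_\sigma$ along a vector field $V$ on $\calT$ and integrates by parts. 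The requirement that $V[(s_1,s_2)_\sigma] - (\bnabla_V s_1,s_2)_\sigma - (s_1,\bnabla_V s_2)_\sigma$ be a pointwise multiple of $(s_1,s_2)_\sigma$ becomes a PDE for $G_\sigma$ as $\sigma$ varies over $\calT$; this is the heart of the matter.

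The key steps, in order, are as follows. First, record the precise formula for $\bnabla$ and its formal adjoint with respect to the $L^2$ structure; the discrepancy between $\bnabla$ and its $L^2$-adjoint is a first-order (in fact zeroth-plus-first order) operator whose symbol involves $\tilde{G}(V) = \partial_V(\text{something})$ — this is the classical computation of Axelrod--Della Pietra--Witten showing the $L^2$ structure is \emph{not} preserved but fails to be so in a controlled way. Second, make the ansatz $G_\sigma = e^{-f_\sigma}$ and translate the failure term into a transport equation $V[f_\sigma] = (\text{explicit expression in } F_\sigma, \text{the complex structure variation, and curvature})$ modulo functions pulled back from $\calT$; solve it order by order in $1/k$. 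The leading order of the solution is forced to be $f_\sigma = F_\sigma + (\text{lower order})$, because at leading order the obstruction to preserving $L^2$ is exactly the Laplacian of the Ricci potential — this is where the Ricci potential enters and gives the claimed leading term $\exp(-F_\sigma)$. Third, check that the order-by-order solution is consistent (the relevant cohomological obstruction on $\calT$ vanishes, using that $\calT$ is contractible and that the Hitchin connection is projectively flat, so the curvature obstruction is a scalar), assemble the $G^{(k)}_\sigma$ with its asymptotic expansion $\exp(-F_\sigma + O(1/k))$, and finally verify that the resulting structure agrees with the $(\cdot,\cdot)^{(k)}$ of Theorem~\ref{HS} by matching the boundary asymptotics at a pair-of-pants degeneration — since Theorem~\ref{HS} characterizes $(\cdot,\cdot)^{(k)}$ uniquely by that asymptotics, it suffices to show the integral representation is compatible with the $P_\infty(\sigma_0,P)$ being a projective isometry, which follows from the degeneration analysis of Sections~\ref{sect3}--\ref{sect4} together with the fact that near the real polarization the density $G_\sigma$ localizes at the Bohr--Sommerfeld fibers.

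I expect the main obstacle to be the second step: solving the transport equation for $f_\sigma$ globally on $\calT$ and controlling all the error terms uniformly in $k$. The equation for $f_\sigma$ is not simply an ODE along a path but a closed-form condition that $d_\calT f_\sigma$ equals a specific $1$-form on $\calT$ with values in $C^\infty(M)$ modulo constants; its integrability is equivalent to a curvature identity, and establishing that identity requires the detailed structure of the Hitchin connection's curvature (which by Theorem~\ref{MainGHCI} is a scalar $2$-form). The subtlety is that the naive integration-by-parts manipulation produces boundary-type terms and commutators of the second-order operator $u(V)$ with multiplication by $G_\sigma$; organizing these into a clean expansion in $1/k$, and showing the $O(1/k)$ remainder is genuinely smooth and uniformly bounded on compact subsets of $\calT$, is the technically demanding part. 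Once the expansion is in hand, matching with Theorem~\ref{HS} and hence deriving the stated form is comparatively formal, relying on the uniqueness clause of that theorem.
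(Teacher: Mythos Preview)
Your approach is broadly sound but differs in structure from the paper's argument. You propose to work from the interior of $\calT$ outward: make the ansatz $G_\sigma = e^{-f_\sigma}$, derive a transport equation on $\calT$ by differentiating the pairing and integrating by parts, solve order by order in $1/k$ to force the leading term to be $F_\sigma$, and finally match against the boundary asymptotics of Theorem~\ref{HS}. The paper instead works from the boundary inward. It first recasts the problem through the Toeplitz map $T^{(k)}_\sigma : C^\infty(M) \to \End(H^{(k)}_\sigma)$, forming the finite-rank quotient bundle $\calC^{(k)} = C^\infty(M)/\ker T^{(k)}$, so that Hermitian structures on $H^{(k)}$ are exactly (real) sections of $\calC^{(k)}$ and projectively preserved ones are exactly those projectively parallel for an induced connection $D^{(k)}$. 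It then sets the boundary value $G^{(k)}_P$ at the pair-of-pants limit to be the constant function $1$ and parallel transports it into $\calT$ along $D^{(k)}$; the asymptotic $\exp(-F_\sigma + O(1/k))$ then falls out of the structure of $D^{(k)}$ together with Claim~\ref{C2}, which says the Ricci potential $F_t$ tends to zero at the degeneration.

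What each approach buys: the paper's Toeplitz-quotient trick immediately reduces the problem to a finite-rank bundle, so existence and uniqueness of the parallel section are automatic and there is no integrability obstruction to check --- the connection $D^{(k)}$ is flat because the Hitchin connection is projectively flat. Your route, by contrast, keeps the problem infinite-dimensional (a PDE for $f_\sigma \in C^\infty(M)$) and must confront the integrability condition and the $1/k$ bookkeeping directly; you correctly identify this as the main obstacle. The paper also reverses the logic of the boundary matching: rather than solving in the interior and then verifying compatibility with $P_\infty(\sigma_0,P)$, it takes the boundary value as the initial condition, so Theorem~\ref{thm18} (that the parallel-transported density represents $[\cdot,\cdot]^{(k)}_P$) is immediate by construction. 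Your order-by-order analysis would give more explicit control over the subleading terms, but the paper's argument is shorter because it never unpacks the expansion beyond invoking the known asymptotics of Toeplitz operators and the vanishing of $F$ at the boundary.
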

From this theorem, we immediately get the following corollary.
\begin{cor}
  The Hermitian structures $\{\cdot,\cdot\}^{(k)}$ on $\End(H^{(k)})$ are uniformly equivalent to $\{\cdot,\cdot\}^{(k)}_{L^2}$, i.e. for each $\sigma \in \calT$ there is a constant $c_\sigma$ independent of $k$ such that
  \begin{align*}
		c_\sigma^{-1}\abs{A}_{L^2}^{(k)} \leq \abs{A}_\sigma^{(k)} \leq c_\sigma \abs{A}_{L^2}^{(k)}
  \end{align*}
  for all $A \in \End(H_\sigma^{(k)})$ and all $k$, where $\abs{\cdot}_{L^2}^{(k)}$ respectively $\abs{\cdot}^{(k)}$ are the norms associated to $\{\cdot,\cdot\}_{L^2}^{(k)}$ respectively to $\{\cdot,\cdot\}^{(k)}$.
\end{cor}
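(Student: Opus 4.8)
The plan is to deduce the corollary directly from Theorem~\ref{thm6}, using that the Ricci potential $F_\sigma$ is a fixed smooth function on the compact manifold $M$ for each $\sigma \in \calT$. First I would observe that, by Theorem~\ref{thm6}, the weight function $G_\sigma^{(k)}$ satisfies pointwise bounds on $M$ that are uniform in $k$: since $F_\sigma \in C^\infty(M)$ and $M$ is compact, $F_\sigma$ attains a finite maximum and minimum, and the error term $O(1/k)$ is uniformly bounded on $M$ for all $k \geq k_0$. Hence there is a constant $d_\sigma \geq 1$, independent of $k$, with $d_\sigma^{-1} \leq G_\sigma^{(k)}(x) \leq d_\sigma$ for all $x \in M$ and all $k$ (absorbing the finitely many small levels $k < k_0$ into $d_\sigma$, where positivity of $G_\sigma^{(k)}$ — which holds because $(\cdot,\cdot)^{(k)}$ is a positive Hermitian structure — gives uniform two-sided bounds on the compact set $M$ anyway).

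Next I would translate these pointwise bounds on the weight into comparison of the two inner products on $H^0(M_\sigma,\calL^k)$. For any section $s \in H_\sigma^{(k)}$ we have
\begin{align*}
	(s,s)_\sigma^{(k)} = \int_M \langle s,s\rangle\, G_\sigma^{(k)}\, \frac{\omega^m}{m!}, \qquad (s,s)_{L^2}^{(k)} = \int_M \langle s,s\rangle\, \frac{\omega^m}{m!},
\end{align*}
and since $\langle s,s\rangle \geq 0$ is an honest nonnegative density, the bound $d_\sigma^{-1} \leq G_\sigma^{(k)} \leq d_\sigma$ integrates to $d_\sigma^{-1} (s,s)_{L^2}^{(k)} \leq (s,s)_\sigma^{(k)} \leq d_\sigma (s,s)_{L^2}^{(k)}$. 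Thus the two Hermitian forms on $H_\sigma^{(k)}$ are comparable with a constant independent of $k$.

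Finally I would pass from $H^{(k)}$ to $\End(H^{(k)})$. Recall $\{\cdot,\cdot\}_\sigma^{(k)}$ and $\{\cdot,\cdot\}_{L^2,\sigma}^{(k)}$ are the induced Hermitian structures on $\End(H_\sigma^{(k)}) \isom H_\sigma^{(k)} \otimes (H_\sigma^{(k)})^*$. A standard linear-algebra fact is that if two inner products $h_1, h_2$ on a finite-dimensional space satisfy $\lambda^{-1} h_2 \leq h_1 \leq \lambda h_2$, then the induced inner products on $\End$ satisfy $\lambda^{-2} \widetilde{h_2} \leq \widetilde{h_1} \leq \lambda^2 \widetilde{h_2}$ (indeed, the relative eigenvalues of the pair on $\End$ are products $\mu_i \mu_j^{-1}$ of relative eigenvalues $\mu_i$ of the pair on the original space, which lie in $[\lambda^{-2},\lambda^2]$). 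Applying this with $\lambda = d_\sigma$ yields the claim with $c_\sigma = d_\sigma^2$. I do not anticipate a genuine obstacle here: the content is entirely in Theorem~\ref{thm6}, and what remains is the routine observation that a uniform-in-$k$ bound on a smooth weight function over a compact manifold gives a uniform-in-$k$ norm equivalence, together with the elementary stability of such equivalences under tensoring with the dual. The only point requiring a word of care is handling the finitely many low levels $k$ for which the asymptotic expansion is not yet in force, which is dispatched by positive-definiteness of each individual $(\cdot,\cdot)^{(k)}$.
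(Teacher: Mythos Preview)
Your argument is correct and is precisely the routine unpacking the paper has in mind when it says the corollary follows ``immediately'' from Theorem~\ref{thm6}: uniform-in-$k$ pointwise bounds on $G_\sigma^{(k)}$ from the asymptotic expansion, followed by the elementary passage from $H^{(k)}$ to $\End(H^{(k)})$. The paper gives no further details beyond this, so your write-up is in full agreement with its approach.
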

\textbf{Acknowledgements.} We thank Gregor Massbaum, Nicolai Reshetikhin, Bob Penner, Søren Fuglede Jørgensen, Jakob Lindblad Blaavand, Jens-Jakob Kratmann Nissen and Jens Kristian Egsgaard for helpful discussion.

\section{The Hitchin connection}\label{ghc}\label{sect2}

In this section, we review our construction of the Hitchin connection
using the global differential geometric setting of \cite{A9}. This
approach is close in spirit to Axelrod, Della Pietra and Witten's in
\cite{ADW}, however we do not use any infinite dimensional gauge
theory. In fact, the setting is more general than the gauge theory
setting in which Hitchin in \cite{H} constructed his original
connection. But when applied to the gauge theory situation, we get the
corollary that Hitchin's connection agrees with Axelrod, Della Pietra
and Witten's.

Hence, we start in the general setting and let $(M,\omega)$ be any
compact symplectic manifold.

\begin{definition}\label{prequantumb}
  A prequantum line bundle $(\L, (\cdot,\cdot), \nabla)$ over the
  symplectic manifold $(M,\omega)$ consist of a complex line bundle
  $\L$ with a Hermitian structure $(\cdot,\cdot)$ and a compatible
  connection $\nabla$ whose curvature is
  \begin{align*}
    F_\nabla(X,Y) = [\nabla_X, \nabla_Y] - \nabla_{[X,Y]} = -i \omega
    (X,Y).
  \end{align*}
  We say that the symplectic manifold $(M,\omega)$ is prequantizable
  if there exist a prequantum line bundle over it.
\end{definition}

Recall that the condition for the existence of a prequantum line
bundle is that 
$$\biggl[\frac{\omega}{2\pi}\biggr]\in \Im(H^2(M,\bZ) \ra
H^2(M,\bR)).$$
 Furthermore, the inequivalent choices of prequantum line
bundles (if they exist) are parametriced by $H^1(M,U(1))$ (see
e.g. \cite{Woodhouse}).

We shall assume that $(M,\omega)$ is prequantizable and fix a
prequantum line bundle $(\L, (\cdot,\cdot), \nabla)$.

Assume that $\T$ is a smooth manifold which smoothly parametrizes
K\"{a}hler structures on $(M,\omega)$. This means that we have a
smooth\footnote{Here a smooth map from $\T$ to $C^\infty(M,W)$, for
  any smooth vector bundle $W$ over $M$, means a smooth section of
  $\pi_M^*(W)$ over $\T\times M$, where $\pi_M$ is the projection onto
  $M$. Likewise, a smooth $p$-form on $\T$ with values in
  $C^\infty(M,W)$ is, by definition, a smooth section of
  $\pi_{\T}^*\Lambda^p(\T)\otimes \pi_M^*(W)$ over $\T\times M$. We
  will also encounter the situation where we have a bundle $\tW$ over
  $\T\times M$ and then we will talk about a smooth $p$-form on $\T$
  with values in $C^\infty(M,\tW_\s)$ and mean a smooth section of
  $\pi_{\T}^*\Lambda^p(\T)\otimes \tW$ over $\T\times M$.}  map $I :
\T \ra C^\infty(M,\End(TM))$ such that $(M,\omega, I_\s)$ is a
K\"{a}hler manifold for each $\s\in \T$.

We will use the notation $M_\sigma$ for the complex manifold $(M,
I_\s)$. For each $\s\in \T$, we use $I_\s$ to split the complexified
tangent bundle $TM_\bC$ into the holomorphic and the anti-holomorphic
parts. These we denote by
$$T_{\s} = E(I_\s,i) = \Im(\Id - iI_\s)$$
and
$$\bT_{\s}= E(I_\s,-i) = \Im(\Id + iI_\s)$$
respectively.

The real K\"{a}hler-metric $g_\s$ on $(M_\s,\omega)$, extended complex
linearly to $TM_\bC$, is by definition
\begin{align}
  \label{eq:3}
  g_\s(X,Y) = \omega(X,I_\s Y),
\end{align}
where $X,Y \in C^\infty(M,TM_\bC)$.

The divergence of a vector field $X$ is the unique function
$\delta(X)$ determined by
\begin{align}
  \label{eq:1}
  \mathcal{L}_X \omega^m = \delta(X) \omega^m,
\end{align}
with $m = \dim M$.
It can be calculated by the formula $\delta(X) = \Lambda d (i_X
\omega)$, where $\Lambda$ denotes contraction with the K\"ahler form.
Even though the divergence only depend on the volume, which is
independent of the of the particular K\"ahler structure, it can be
expressed in terms of the Levi-Civita connection on $M_\sigma$ by
$\delta(X) = \tr \nabla_\sigma X$.

Inspired by this expression, we define the divergence of a symmetric
bivector field $$B \in C^\infty(M, S^2(TM_{\bC}))$$ by
\begin{align*}
  \delta_\sigma(B) = \tr \nabla_\sigma B.
\end{align*}
Notice that the divergence of bivector fields does depend on the point
$\sigma \in \mathcal{T}$.

Suppose $V$ is a vector field on $\T$. Then we can differentiate $I$
along $V$ and we denote this derivative by $V[I] : \T \ra
C^\infty(M,\End(TM_\bC))$. Differentiating the equation $I^2 = -\Id$,
we see that $V[I]$ anti-commutes with $I$. Hence, we get that
\[V[I]_\s \in C^\infty(M, (\bT_\s^*\otimes T_\s)\oplus
(T_\s^*\otimes \bT_\s))\] for each $\s\in \T$. Let
\begin{align*}
  V[I]_\s = V[I]'_\s + V[I]''_\s
\end{align*}
be the corresponding decomposition such that $V[I]'_\s\in C^\infty(M,
\bT_\s^*\otimes T_\s)$ and $V[I]''_\s\in C^\infty(M, T_\s^*\otimes
\bT_\s)$.

Now we will further assume that $\T$ is a complex manifold and that
$I$ is a holomorphic map from $\T$ to the space of all complex
structures on $M$.  Concretely, this means that
\[V'[I]_\s = V[I]'_\s\] and
\[V''[I]_\s = V[I]''_\s\] for all $\s\in \T$, where $V'$ means the
$(1,0)$-part of $V$ and $V''$ means the $(0,1)$-part of $V$ over $\T$.

Let us define $\tG(V) \in C^\infty(M , TM_\bC\otimes TM_\bC)$ by
\[V[I] = \tG(V) \omega,\] and define $G(V) \in C^\infty(M, T_\s
\otimes T_\s)$ such that
\[\tG(V) = G(V) + {\overline G(V)} \]
for all real vector fields $V$ on $\T$. 

We see that $\tG$ and $G$ are
one-forms on $\T$ with values in $C^\infty(M , TM_\bC\otimes TM_\bC)$
and $C^\infty(M, T_\s \otimes T_\s)$, respectively.  We observe that
\[V'[I] = G(V)\omega,\] and $G(V) = G(V')$.

Using the relation \eqref{eq:3}, one checks that
\begin{align*}
  \tilde G(V) = - V[g^{-1}],
\end{align*}
where $g^{-1} \in C^\infty(M, S^2(TM))$ is the symmetric bivector
field obtained by raising both indices on the metric tensor.  Clearly,
this implies that $\tG$ takes values in $C^\infty(M , S^2(TM_\bC))$
and thus $G$ takes values in $C^\infty(M, S^2(T_\s))$.

On $\L^k$, we have the smooth family of $\bar\partial$-operators
$\nabla^{0,1}$ defined at $\s\in \T$ by
\[\nabla^{0,1}_\s = \frac12 (1+i I_\s)\nabla.\]
For every $\sigma\in \T$, we consider the finite-dimensional subspace
of $C^\infty(M,\L^k)$ given by
\[H_\sigma^{(k)} = H^0(M_\s, \L^k) = \{s\in C^\infty(M, \L^k)\,| \, 
\nabla^{0,1}_\s s =0 \}.\] Let $\Nablat$ denote the trivial connection
in the trivial bundle $\mathcal{H}^{(k)} = \T\times C^\infty(M,\L^k)$,
and let $\D(M,\L^k)$ denote the vector space of differential operators
on $C^\infty(M,\L^k)$. For any smooth one-form $u$ on $\T$ with values
in $\D(M,\L^k)$, we have a connection $\Nabla$ in $\cH^{(k)}$ given by
$$\Nabla_V = \Nablat_V - u(V)$$
for any vector field $V$ on $\T$.

\begin{lemma}
  The connection $\Nabla$ in $\cH^{(k)}$ preserves the subspaces
  $H^{(k)}_\sigma \subset C^\infty(M,\L^k)$, for all $\sigma \in \T$,
  if and only if
  \begin{equation}
    \frac{i}2  V[I] \nabla^{1,0} s + \nabla^{0,1}u(V)s = 0\label{eqcond}
  \end{equation}
  for all vector fields $V$ on $\T$ and all smooth sections $s$ of
  $H^{(k)}$.
\end{lemma}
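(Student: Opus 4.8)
The plan is to differentiate, along a vector field $V$ on $\T$, the equation $\nabla^{0,1}_\sigma s = 0$ that cuts out $H^{(k)}_\sigma$ inside $C^\infty(M,\L^k)$, and then to check precisely when the correction $-u(V)$ restores holomorphicity. Throughout I use that the spaces $H^{(k)}_\sigma$ assemble into a finite-rank subbundle $H^{(k)}\subset\cH^{(k)}$ (as is the case here), so that ``$\Nabla$ preserves the subspaces'' means exactly: for every local section $s$ of $\cH^{(k)}$ with $s(\sigma)\in H^{(k)}_\sigma$ for all $\sigma$, one has $\nabla^{0,1}_\sigma(\Nabla_V s)(\sigma)=0$ for all $\sigma$ and all $V$; and any vector in a fibre $H^{(k)}_{\sigma_0}$ occurs as $s(\sigma_0)$ for such an $s$.

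First I would fix such a section $s$ and differentiate. Since $\nabla$ is independent of $\sigma$ while $I$ is not, and since $\Nablat$ is the trivial connection (so that $\Nablat_V s = V[s]$, and after applying $\nabla$ this is a curve in the fixed space $\Omega^1(M,\L^k)$), the Leibniz rule gives
\[
0 \;=\; V\!\left[\tfrac12(1+iI_\sigma)\nabla s\right] \;=\; \tfrac{i}{2}\,V[I]\,\nabla s \;+\; \nabla^{0,1}_\sigma \Nablat_V s .
\]
Because $\nabla^{0,1}_\sigma s=0$ I may replace $\nabla s$ by $\nabla^{1,0}_\sigma s = \tfrac12(1-iI_\sigma)\nabla s$; and since $V[I]$ anticommutes with $I_\sigma$, the term $V[I]\nabla^{1,0}_\sigma s$ lies in the $(0,1)$-part, consistently with both sides being $\L^k$-valued $(0,1)$-forms. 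Hence $\nabla^{0,1}_\sigma \Nablat_V s = -\tfrac{i}{2}V[I]\nabla^{1,0}_\sigma s$, a relation valid automatically for every section $s$ of $H^{(k)}$.

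Then, using $\Nabla_V = \Nablat_V - u(V)$, I compute
\[
\nabla^{0,1}_\sigma(\Nabla_V s) \;=\; \nabla^{0,1}_\sigma \Nablat_V s \;-\; \nabla^{0,1}_\sigma u(V)s \;=\; -\Bigl(\tfrac{i}{2}V[I]\nabla^{1,0}_\sigma s + \nabla^{0,1}_\sigma u(V)s\Bigr).
\]
Thus $\Nabla_V s$ lies in $H^{(k)}$ for every section $s$ of $H^{(k)}$ if and only if the right-hand side vanishes for every such $s$; since both terms in the bracket depend only on the value $s(\sigma)$, not on derivatives of $s$, and since every fibre vector extends to a section, this is exactly condition~\eqref{eqcond} holding for all $V$ and all $s\in H^{(k)}$, which proves the lemma.

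The only real obstacle is the bookkeeping of the second paragraph: making precise how $V[I]\in C^\infty(M,\End(TM_\bC))$ acts on $\L^k$-valued one-forms and verifying via the anticommutation of $V[I]$ with $I_\sigma$ that $V[I]\nabla^{1,0}_\sigma s$ genuinely lands in $\Omega^{0,1}(M_\sigma,\L^k)$ (so that the two sides of \eqref{eqcond} are of the same type), together with recording the standing fact that the $H^{(k)}_\sigma$ form a subbundle, which is what lets the pointwise statement be phrased through global sections. Everything else is a single Leibniz differentiation.
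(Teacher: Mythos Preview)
Your proof is correct and is precisely the standard argument: differentiate the holomorphicity condition $\nabla^{0,1}_\sigma s = 0$ in $\sigma$, use that $\nabla$ is fixed while $I$ varies, and then subtract the $u(V)$ correction. The paper itself does not supply a proof but refers the reader to \cite{A9}; your write-up is exactly what that reference contains, so there is nothing to compare.
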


This result is not surprising. See \cite{A9} for a proof this
lemma. Observe that if this condition holds, we can conclude that the
collection of subspaces $H^{(k)}_\sigma \subset C^\infty(M,\L^k)$, for
all $\sigma \in \T$, form a subbundle $H^{(k)}$ of $\cH^{(k)}$.

We observe that $u(V'') = 0$ solves \eqref{eqcond} along the
anti-holomorphic directions on $\T$ since
\[V''[I] \nabla^{1,0} s = 0.\] In other words, the $(0,1)$-part of the
trivial connection $\Nablat$ induces a $\bar\partial$-operator on
$H^{(k)}$ and hence makes it a holomorphic vector bundle over $\T$.

This is of course not in general the situation in the $(1,0)$-direction. Let us now consider a particular $u$ and prove that it
solves \eqref{eqcond} under certain conditions.

On the K\"{a}hler manifold $(M_\s,\omega)$, we have the K\"{a}hler
metric and we have the Levi-Civita connection $\nabla$ in $T_\s$. We
also have the Ricci potential $F_\s\in C^\infty_0(M,\bR)$. here
\[C^\infty_0(M,\bR) = \left\{ f\in C^\infty(M,\bR) \mid \int_M f
  \omega^m = 0\right\}. \] The Ricci potential is the element of
$F_\s\in C^\infty_0(M,\bR)$ which satisfies
\[\Ric_\s = \Ric_\s^H + 2 i \partial_\s\dbar_\s F_\s,\]
where $\Ric_\s\in \Omega^{1,1}(M_\s)$ is the Ricci form and
$\Ric_\s^H$ is its harmonic part. In this way we get a
smooth function $F : \T \ra C^\infty_0(M,\bR)$.

For any symmetric bivector field $B\in C^\infty(M, S^2(TM))$ we get a
linear bundle map
\begin{align*}
  B \colon TM^* \ra TM
\end{align*}
given by contraction. In particular, for a smooth function $f$ on $M$,
we get a vector field 
$$B d f \in C^\infty(M,TM).$$

We define the operator
\begin{eqnarray*}
  \Delta_B &: &C^\infty(M,\L^k) \xrightarrow{\nabla} C^\infty(M,TM^*\otimes\L^k)
  \xrightarrow{B\otimes\Id}
  C^\infty(M,TM \otimes \L^k) \\
  && \qquad \xrightarrow{\nabla_\s\otimes \Id +
    \Id\otimes \nabla}
  C^\infty(M,TM^* \otimes TM \otimes\L^k)
  \xrightarrow{\tr} C^\infty(M,\L^k).
\end{eqnarray*}
Let's give a more concise formula for this operator.  Define the
operator
\begin{align*}
  \nabla^2_{X,Y} = \nabla_X \nabla_Y - \nabla_{\nabla_X Y},
\end{align*}
which is tensorial and symmetric in the vector fields $X$ and
$Y$. Thus, it can be evaluated on a symmetric bivector field and we
have
\begin{align*}
  \Delta_B = \nabla^2_B + \nabla_{\delta(B)}.
\end{align*}

Putting these constructions together, we consider, for some $n\in \bZ$
such that $2k+n \neq 0$, the following operator
\begin{equation}
  u(V) = \frac1{k+n/2}o(V) - V'[F],\label{equ}
\end{equation}
where
\begin{equation}
  o(V) = - \frac{1}{4} (\Delta_{G(V)} + 2\nabla_{G(V)dF} - 2n V'[F]).\label{eqo}
\end{equation}

The connection associated to this $u$ is denoted $\Nabla$, and we call
it the {\em Hitchin connection} in $\cH^{(k)}$. Following \cite{A9}, we now introduce the notion of a rigid family of K\"{a}hler structures.

\begin{definition}\label{Ridig}
  We say that the complex family $I$ of K\"{a}hler structures on
  $(M,\omega)$ is {\em rigid} if
  \[\dbar_\sigma (G(V)_\sigma) = 0 \]
  for all vector fields $V$ on $\T$ and all points $\sigma\in \T$.
\end{definition}

We will assume our holomorphic family $I$ is rigid. There are plenty
of examples of rigid holomorphic families of complex structures, see
e.g. \cite{AGL}.

\begin{theorem}\label{HCE}
  Suppose that $I$ is a rigid family of K\"{a}hler structures on the
  compact, prequantizable symplectic manifold $(M,\omega)$ which
  satisfies that there exists an $n\in \bZ$ such that the first Chern
  class of $(M,\omega)$ is $n [\frac{\omega}{2\pi}]\in H^2(M,\bZ)$ and
  $H^1(M,\bR) = 0$. Then $u$ given by \eqref{equ} and \eqref{eqo}
  satisfies \eqref{eqcond} for all $k$ such that $2k+n \neq 0$.
\end{theorem}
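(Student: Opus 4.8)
The plan is to verify condition \eqref{eqcond} directly by computing $\nabla^{0,1}_\sigma u(V)s$ for $s \in H^0(M_\sigma,\L^k)$ and showing it cancels $\frac{i}{2} V[I]\nabla^{1,0}s$. Since $u(V) = \frac{1}{k+n/2}o(V) - V'[F]$ and $o(V)$ is built from $\Delta_{G(V)}$, $\nabla_{G(V)dF}$ and a multiplication operator, the heart of the matter is to commute the $(0,1)$-part of the connection past these second-order operators acting on a holomorphic section. The key structural inputs are: (i) $s$ is holomorphic, so $\nabla^{0,1}_\sigma s = 0$ and all derivatives of $s$ reduce to $\nabla^{1,0}$ terms; (ii) rigidity, $\dbar_\sigma(G(V)_\sigma) = 0$, which ensures that when $\dbar_\sigma$ hits the symbol $G(V)$ of $\Delta_{G(V)}$ it produces nothing; (iii) the prequantum curvature identity $F_\nabla = -i\omega$, which is what converts commutators of $\nabla$ into algebraic multiplication by $\omega$ (and hence, after contraction with $G(V)$, into the operator $V[I]$ one wants to produce); (iv) the Ricci potential identity $\Ric_\sigma = \Ric_\sigma^H + 2i\partial_\sigma\dbar_\sigma F_\sigma$, which is needed to handle the terms where a Ricci curvature contribution appears from commuting Levi-Civita derivatives. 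The hypotheses $c_1(M,\omega) = n[\omega/2\pi]$ and $H^1(M,\bR)=0$ enter precisely through this last point: $H^1 = 0$ guarantees the Ricci potential $F_\sigma$ exists and depends smoothly on $\sigma$, and the proportionality of $c_1$ to $[\omega]$ is what makes the harmonic part $\Ric_\sigma^H$ equal to $n\omega$, so that the "anomaly" terms organize into the specific combination $\nabla_{G(V)dF}$ and $nV'[F]$ appearing in \eqref{eqo}.

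Concretely, I would proceed as follows. First, record the basic commutation formulas on $\L^k$: for a holomorphic section $s$, express $\nabla^{0,1}_\sigma \Delta_{G(V)} s$ by moving $\dbar_\sigma = \frac{1}{2}(1+iI_\sigma)\nabla$ through the two covariant derivatives and the contraction with $G(V)$. Each time $\dbar_\sigma$ passes a $\nabla$, one picks up curvature terms: the $\L^k$-curvature contributes $-ik\omega$ (times the relevant contraction), and the Levi-Civita curvature of $T_\sigma$ contributes a Ricci term when the trace is taken. By rigidity, $\dbar_\sigma$ annihilates the coefficient $G(V)$ itself, so no term involving derivatives of the symbol survives. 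Collecting the $\L^k$-curvature terms, using $V'[I] = G(V)\omega$, one finds they assemble into $-\frac{ik}{2}V[I]\nabla^{1,0}s$ up to the factor; dividing by $k+n/2$ and combining with the contributions from $-V'[F]$ and from the $nV'[F]$ and $\nabla_{G(V)dF}$ corrections inside $o(V)$, everything should cancel $\frac{i}{2}V[I]\nabla^{1,0}s$ exactly. The term $\nabla_{\delta(G(V))}$ inside $\Delta_{G(V)} = \nabla^2_{G(V)} + \nabla_{\delta(G(V))}$ needs separate bookkeeping, and its $\dbar_\sigma$-derivative interacts with $\dbar_\sigma$ of $\delta(G(V))$; here one uses that $\delta_\sigma(G(V))$ differs from a holomorphic-type quantity by something controlled again by rigidity and by the identity $\delta_\sigma(G(V)) = $ (something expressible via $V[I]$ and $dF$), which is where the precise coefficient $2\nabla_{G(V)dF}$ in \eqref{eqo} gets pinned down.

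The main obstacle, as usual in Hitchin-connection arguments, is the careful curvature bookkeeping in the middle step: one must correctly track every term arising from commuting $\dbar_\sigma$ past a second-order operator whose coefficients depend on $\sigma$, keeping straight which pieces are symbol-derivatives (killed by rigidity), which are $\L^k$-curvature (producing the desired $V[I]$ term with the right power of $k$), and which are base-manifold Ricci curvature (absorbed by the Ricci-potential terms and the $c_1 = n[\omega/2\pi]$ hypothesis). A secondary subtlety is making sure the computation is valid as an identity of differential operators applied to \emph{holomorphic} $s$ only — several intermediate expressions are not tensorial and only the final combination descends to $H^{(k)}$. I would organize the cancellation by splitting $u(V)$ into its second-order part ($\frac{1}{4(k+n/2)}\Delta_{G(V)}$), its first-order part (the $\nabla_{G(V)dF}$ piece), and its zeroth-order part (the multiples of $V'[F]$), verify \eqref{eqcond} order by order in the symbol, and cite \cite{A9} for the detailed identities on $\delta_\sigma(G(V))$ and for the variational formula $\tilde G(V) = -V[g^{-1}]$ already quoted above. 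This matches the strategy of \cite{ADW} and \cite{H} but in the coordinate-free packaging of \cite{A9}.
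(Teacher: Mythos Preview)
Your overall strategy---compute $\nabla^{0,1}_\sigma u(V)s$ for holomorphic $s$, commute $\dbar_\sigma$ past the second-order operator using rigidity to kill symbol-derivatives and the prequantum curvature to produce the $V[I]$ term, then absorb the remaining Ricci contributions via the $F$-terms---is exactly the approach the paper takes, which it packages into three lemmas cited from \cite{A9}: one computing $\nabla^{0,1}_\sigma(\Delta_{G(V)} + 2\nabla_{G(V)dF})s$ for holomorphic $G(V)$, and two giving the identity
\[
4i\,\dbar_\sigma(V'[F]_\sigma) \;=\; 2\,(G(V)dF)_\sigma\,\omega \;+\; \delta_\sigma(G(V))_\sigma\,\omega.
\]
Your proposal is missing precisely this identity, which is the non-obvious step: you treat the zeroth-order piece $V'[F]$ as something whose $\dbar$ will ``combine'' with the rest, but you never say what $\dbar_\sigma V'[F]$ actually equals. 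The paper obtains it by differentiating the defining relation $\Ric_\sigma = n\omega + 2i\partial_\sigma\dbar_\sigma F_\sigma$ along $V'$, relating $(V'[\Ric])^{1,1}$ to $\partial(\delta(G(V))\omega)$, and then arguing that the discrepancy one-form $2(G(V)dF)\omega + \delta(G(V))\omega - 4i\dbar V'[F]$ is both $\partial$- and $\dbar$-closed, hence closed, hence exact by $H^1(M,\bR)=0$, hence zero because it is of pure type $(0,1)$.

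This also corrects a misattribution in your write-up: $H^1(M,\bR)=0$ is \emph{not} what guarantees existence of the Ricci potential (that follows from Hodge theory on any compact K\"ahler manifold); it is used exactly in the vanishing argument just described. Once the boxed identity is in hand, substituting it together with the formula for $\nabla^{0,1}_\sigma(\Delta_{G(V)} + 2\nabla_{G(V)dF})s$ into $u(V) = -\frac{1}{4k+2n}(\Delta_{G(V)} + 2\nabla_{G(V)dF} + 4kV'[F])$ gives $\nabla^{0,1}_\sigma u(V)s = -\frac{i}{2}V[I]\nabla^{1,0}s$ on the nose, as required. So your plan works, but the step you left vaguest is the one that carries the two cohomological hypotheses.
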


Hence, the Hitchin connection $\Nabla$ preserves the subbundle
$H^{(k)}$ under the stated conditions. Theorem \ref{HCE} is
established in \cite{A9} through the following three lemmas.

\begin{lemma} \label{dbarl} Assume that the first Chern class of
  $(M,\omega)$ is $n [\frac{\omega}{2\pi}]\in H^2(M,\bZ)$. For any
  $\s\in \T$ and for any $G\in H^0(M_\s, S^2(T_\s))$, we have the
  following formula
  \begin{align*}
    \nabla^{0,1}_\s (\Delta_G(s) + 2 \nabla_{G d F_\s}(s)) = - i (2 k
    + n) \omega G \nabla (s) + 2 ik \omega (G dF_\sigma)s + ik \omega
    \delta_\sigma(G) s,
  \end{align*}
  for all $s\in H^0(M_\s, \L^k)$.
\end{lemma}

\begin{lemma}\label{Vriccipot}
  We have the following relation
  \begin{align*}
    4i \bar\partial_\s (V'[F]_\s) = 2 (G(V) dF)_\sigma \omega +
    \delta_\sigma (G(V))_\sigma \omega,
  \end{align*}
  provided that $H^1(M,\bR) = 0$.
\end{lemma}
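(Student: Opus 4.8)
The plan is to differentiate the defining equation of the Ricci potential,
\[
\Ric_\s = \Ric_\s^H + 2i\,\dd_\s\dbar_\s F_\s,
\]
along a vector field $V$ on $\T$ and to extract the $(0,1)$-component of the result along $M_\s$. On the left, $V[\Ric_\s]$ is the variation of the Ricci form under the variation $V[I]_\s$ of the complex structure; on the right, the harmonic part $\Ric^H$ is locally constant in $\s$ modulo exact forms (its cohomology class is $2\pi$ times the fixed first Chern class), so $V[\Ric^H_\s]$ is $\dbar_\s$-exact of the appropriate type and contributes only through a term that can be absorbed, while $V[2i\,\dd_\s\dbar_\s F_\s]$ splits into a term $2i\,\dd_\s\dbar_\s(V[F]_\s)$ plus terms coming from $V[\dd_\s]$ and $V[\dbar_\s]$, which are controlled by $V[I]_\s = \tG(V)\omega$, hence by $G(V)$ and $\overline{G(V)}$.

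The key computational step is a standard first-variation-of-Ricci formula: under a Kähler deformation with $V[I]_\s$ of type $(1,0)\otimes(0,1)$-plus-conjugate encoded by the symmetric bivector $G(V)$, the $(0,1)$-part (in the fixed-at-$\s$ complex structure) of the variation of the Ricci form is, up to the universal constants appearing in the statement, $\dbar_\s$ applied to the $1$-form $\bigl(G(V)\,dF\bigr)_\s \lrcorner\,\omega$ together with a divergence term $\delta_\s(G(V))_\s$. Concretely, I would compute $V'[I]\nabla^{1,0}$ acting on the local Kähler potential / on $\log\det g_\s$, use that the Ricci form is $-i\,\dd\dbar\log\det g_\s$ locally, and commute the $V$-derivative past $\dd_\s$, $\dbar_\s$; the extra terms produced are exactly $G(V)$-contractions of $dF$ (the $\dbar F$ piece being where $F$ re-enters) and the trace $\tr\nabla_\s G(V)=\delta_\s(G(V))$. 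Rigidity of the family, $\dbar_\s(G(V)_\s)=0$, is what guarantees that no further $\dbar_\s G(V)$ term survives and that the identity is clean. Finally, the hypothesis $H^1(M,\bR)=0$ is used precisely to conclude that a $\dbar_\s$-closed $(0,1)$-form which is the variation of something exact is itself $\dbar_\s$-exact with the global primitive $V'[F]_\s$, i.e.\ to rule out a harmonic ambiguity and pin down $4i\,\dbar_\s(V'[F]_\s)$ rather than just its class.

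The main obstacle I expect is bookkeeping the first variation of the Ricci form correctly: keeping track of types with respect to the $\s$-dependent splitting $TM_\bC = T_\s \oplus \bT_\s$ while $\s$ itself varies, and making sure the terms arising from $V[\det g_\s]$ versus $V[\dd_\s\dbar_\s]$ combine to give exactly the coefficients $2$ and $1$ in front of $(G(V)dF)_\s\omega$ and $\delta_\s(G(V))_\s\omega$. Once that variational formula is in hand, matching it against the defining equation of $F_\s$ and invoking $H^1(M,\bR)=0$ is routine. I would organize the write-up as: (i) record $V[\Ric^H_\s]$ is $\dbar_\s\dd_\s$-exact and harmless; (ii) prove the first-variation formula for $\Ric_\s$ in terms of $G(V)$; (iii) subtract to isolate $\dbar_\s(V'[F]_\s)$; (iv) use $H^1(M,\bR)=0$ to upgrade from cohomological to on-the-nose equality.
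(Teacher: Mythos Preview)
Your proposal is correct and follows essentially the same route as the paper: differentiate the defining equation $\Ric = \Ric^H + 2i\,\partial\bar\partial F$ along $V'$, feed in the first-variation-of-Ricci identity (which the paper isolates as Lemma~\ref{lemma4}, namely $2(V'[\Ric])^{1,1} = \partial(\delta(G(V))\omega)$), and then use $H^1(M,\bR)=0$ to kill the discrepancy. One small point of organization worth noting: the paper does not invoke rigidity directly to get $\bar\partial_\sigma$-closedness of the $(0,1)$-form $2(G(V)dF)\omega + \delta(G(V))\omega - 4i\,\bar\partial V'[F]$; instead it first obtains $\partial_\sigma$-closedness from Lemma~\ref{lemma4} and then appeals to Lemma~\ref{dbarl} for $\bar\partial_\sigma$-closedness, so that the form is $d$-closed, hence $d$-exact by $H^1(M,\bR)=0$, and therefore zero since an exact $(0,1)$-form on a compact manifold vanishes.
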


\begin{lemma}\label{lemma4}
  For any smooth vector field $V$ on $\T$, we have that
  \begin{equation}
    2(V'[\Ric])^{1,1} =  \partial (\delta(G(V)) \omega).
  \end{equation}
\end{lemma}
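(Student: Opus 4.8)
The plan is to reduce the identity $2(V'[\Ric])^{1,1} = \partial(\delta(G(V))\omega)$ to a local Kähler computation by differentiating the Ricci form along $\T$. First I would recall that on $M_\sigma$ the Ricci form is, in local holomorphic coordinates, $\Ric_\sigma = -i\partial\dbar \log\det(g_\sigma)$, where $g_\sigma$ is the Kähler metric \eqref{eq:3}. Differentiating along a $(1,0)$-vector field $V'$ on $\T$, and using $\tilde G(V) = -V[g^{-1}]$ together with $V'[I] = G(V)\omega$ (so that $V'$ moves the complex structure in the $\bT_\sigma^*\otimes T_\sigma$ direction only), I would compute $V'[\log\det g_\sigma]$. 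The key point is that the variation of the metric determinant in the direction that only changes the complex structure (not $\omega$) can be expressed purely in terms of $G(V)$; in local coordinates one gets something like $V'[\log\det g] = -g_{i\bar\jmath}\, V'[g^{i\bar\jmath}]$ up to the holomorphic/antiholomorphic bookkeeping, and this contracts $G(V)$ against the metric, which is exactly $\delta_\sigma(G(V))$ (recall $\delta_\sigma(B) = \tr\nabla_\sigma B$, and for the $(2,0)$-type tensor $G(V)$ this trace against the Kähler metric is the relevant divergence).

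The main steps, in order: (1) Fix $\sigma$ and work in normal holomorphic coordinates at a point, write $\Ric_\sigma = -i\partial\dbar\log\det g_\sigma$ and note $(V'[\Ric])^{1,1}$ is obtained by applying $V'[\cdot]$ and extracting the $(1,1)$-part with respect to $I_\sigma$ (the variation of the $(1,1)$-decomposition itself contributes the non-$(1,1)$ pieces, which we discard). (2) Compute $V'[g_{i\bar\jmath}]$ and $V'[g^{i\bar\jmath}]$ using \eqref{eq:3} and the relation $\tilde G(V) = -V[g^{-1}]$; isolate the contribution of $G(V)$ versus $\overline{G(V)}$ and observe that only $G(V)$ enters the holomorphic-type variation. (3) Show $V'[\log\det g_\sigma] = -\tfrac12\delta_\sigma(G(V))$ or the appropriate constant multiple, i.e. identify the metric-contraction of $G(V)$ with the divergence $\delta_\sigma(G(V))$ — here the rigidity assumption $\dbar_\sigma(G(V)_\sigma) = 0$, while perhaps not strictly needed for this lemma, guarantees $\delta_\sigma(G(V))$ behaves well. (4) Apply $-i\partial\dbar$ and match with $\partial(\delta(G(V))\omega)$, using that $\dbar(\delta(G(V))\omega) = \dbar\delta(G(V))\wedge\omega$ reorganizes into a $\partial$ of the same expression modulo exact terms, and track the factor of $2$.

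The hard part will be step (2)–(3): carefully disentangling which part of $V'[g]$ and $V'[\det g]$ is captured by $G(V)$ as opposed to its conjugate, since $V'[I]$ has components in $\bT_\sigma^*\otimes T_\sigma$ and the induced variation of the metric has both a $(2,0)$-ish and a $(0,2)$-ish flavor under the moving complex structure; getting the $(1,1)$-projection on the left-hand side to line up exactly with $\partial(\delta(G(V))\omega)$ requires bookkeeping of the type $V'[g^{i\bar\jmath}] = -g^{i\bar l}g^{k\bar\jmath}V'[g_{k\bar l}]$ combined with the identity $V'[g_{k\bar l}] = \omega(\,\cdot\,,G(V)\,\cdot\,)$-type expressions. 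I expect this to be a few lines of index computation in normal coordinates, and once the determinant variation is pinned down, applying $-i\partial\dbar$ and comparing with the right-hand side is straightforward. An alternative, more invariant route would be to use the formula $\delta(X) = \tr\nabla_\sigma X$ together with the fact that the Ricci form satisfies $\Ric_\sigma = -d(d^c\log\,\text{vol})$-type identities, deriving the variational statement from the standard formula for the first variation of the Ricci curvature under a deformation of Kähler structure with fixed symplectic form; this is essentially the content of \cite{A9} and I would cite it if the direct computation becomes unwieldy.
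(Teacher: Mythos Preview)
The paper does not actually prove this lemma; it is one of three lemmas quoted from \cite{A9} as input to Theorem~\ref{HCE}, so there is no detailed argument here to compare against beyond the citation. Your fallback plan --- cite \cite{A9} --- is exactly what the paper does.

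Your primary approach, however, has a genuine gap in step~(3). You propose to show $V'[\log\det g_\sigma] = -\tfrac12\delta_\sigma(G(V))$ up to a constant, describing $\delta_\sigma(G(V))$ as a ``metric contraction'' of $G(V)$. But as defined in the paper, $\delta_\sigma(B) = \tr\nabla_\sigma B$ is the natural $T^*M$--$TM$ trace of the covariant derivative, and for $G(V)\in C^\infty(M,S^2(T_\sigma))$ this yields a \emph{vector field}: in coordinates, $\delta_\sigma(G(V))^j = \nabla_i G(V)^{ij}$, a section of $T_\sigma$. The right-hand side $\delta(G(V))\omega$ means the contraction $i_{\delta(G(V))}\omega$, a $(0,1)$-form, and $\partial$ of that is the $(1,1)$-form to be matched. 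Your scalar $V'[\log\det g_\sigma]$ cannot equal this vector field, and applying $-i\partial\dbar$ to a scalar does not produce $\partial(i_X\omega)$ for a vector field $X$ in the way you outline. There is no ``trace against the K\"ahler metric'' of a $(2,0)$-bivector that gives the divergence; the divergence genuinely involves first derivatives of $G(V)$.

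A second subtlety you acknowledge but underweight: $\log\det(g_{i\bar\jmath})$ is tied to a choice of $I_\sigma$-holomorphic coordinates, which themselves move with $\sigma$, so its $V'$-derivative is not simply $-g_{i\bar\jmath}V'[g^{i\bar\jmath}]$; the $\partial,\dbar$ operators vary as well, and this variation is exactly what feeds the $(1,1)$-projection on the left. The workable route --- which you sketch at the end --- is to vary the Levi-Civita (or Chern) connection under $I\mapsto I + tV'[I]$ with $\omega$ fixed, trace to get $V'[\Ric]$, and observe that its $(1,1)$-part is $\partial$ of a $(0,1)$-form built from $\nabla_\sigma G(V)$, namely $i_{\delta(G(V))}\omega$.
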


Let us here recall how Lemma \ref{Vriccipot} is derived from Lemma
\ref{lemma4}. By the definition of the Ricci potential
\[\Ric = \Ric^H + 2 i \partial \bar \partial F,\]
where $\Ric^H = n\omega$ by the assumption $c_1(M, \omega) =
n[\frac{\omega}{2\pi}]$. Hence
\[V'[\Ric] = - d V'[I] d F + 2 i d \bar \partial V'[F],\] and
therefore
\[ 4i \partial \bar \partial V'[F] = 2(V'[\Ric])^{1,1} + 2\partial
V'[I] d F.\] From the above, we conclude that
$$ (2 (G(V) d F) \omega + \delta(G(V)) \omega -
4i \bar \partial V'[F])_\sigma \in \Omega^{0,1}_\s(M)$$ is a
$\partial_\sigma$-closed one-form on $M$. From Lemma \ref{dbarl}, it
follows that it is also $\bar \partial_\sigma$-closed, hence it must
be a closed one-form. Since we assume that $H^1(M,\bR) = 0$, we see
that it must be exact. But then it in fact vanishes since it is of
type $(0,1)$ on $M_\s$.

From the above we conclude that
$$
u(V) = \frac1{k+n/2}o(V) - V'[F] = - \frac1{4k+2n} \bigl(
  \Delta_{G(V)} + 2\nabla_{G(V)dF} + 4k V'[F]\bigr)
$$
solves \eqref{eqcond}. Thus we have established Theorem \ref{HCE} and
hence also provided an alternative proof of Theorem \ref{MainGHCI}.

In \cite{AGL} we use half-forms and the metaplectic correction to
prove the existence of a Hitchin connection in the context of
half-form quantization. The assumption that the first Chern class of
$(M,\omega)$ is $n [\frac{\omega}{2\pi}]\in H^2(M,\bZ)$ is then 
replaced by the vanishing of the second Stiefel-Whitney class of $M$
(see \cite{AGL} for more details).

Suppose $\Gamma$ is a group which acts by bundle automorphisms of $\L$
over $M$ preserving both the Hermitian structure and the connection in
$\L$. Then there is an induced action of $\Gamma$ on $(M,\omega)$. We
will further assume that $\Gamma$ acts on $\T$ and that $I$ is
$\Gamma$-equivariant. In this case we immediately get the following
invariance.

\begin{lemma}
  The natural induced action of $\Gamma$ on $\cH^{(k)}$ preserves the
  subbundle $H^{(k)}$ and the Hitchin connection.
\end{lemma}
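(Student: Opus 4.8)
The plan is to prove the final Lemma: if $\Gamma$ acts by bundle automorphisms of $\L\to M$ preserving $(\cdot,\cdot)$ and $\nabla$, and acts on $\T$ with $I$ being $\Gamma$-equivariant, then the induced action on $\cH^{(k)}=\T\times C^\infty(M,\L^k)$ preserves the subbundle $H^{(k)}$ and the Hitchin connection $\Nabla$.

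First I would set up the induced action explicitly. An automorphism $\gamma\in\Gamma$ of $\L$ covering a diffeomorphism $\phi_\gamma$ of $M$ induces a pullback/pushforward on sections of $\L^k$, and since $\gamma$ also acts on $\T$, we get an action on $\cH^{(k)}$ by $(\sigma,s)\mapsto(\gamma\cdot\sigma,\gamma\cdot s)$. Because $\gamma$ preserves $\nabla$, it intertwines $\nabla^{0,1}_\sigma$ with $\nabla^{0,1}_{\gamma\cdot\sigma}$ — here one uses $\Gamma$-equivariance of $I$, i.e. $\phi_\gamma^* I_{\gamma\cdot\sigma} = I_\sigma$ (suitably interpreted), so that $\nabla^{0,1}_\sigma = \tfrac12(1+iI_\sigma)\nabla$ transforms correctly. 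Hence $\gamma$ maps $H^0(M_\sigma,\L^k) = \Ker\nabla^{0,1}_\sigma$ isomorphically onto $H^0(M_{\gamma\cdot\sigma},\L^k)$, which shows $H^{(k)}$ is a $\Gamma$-invariant subbundle.

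Next, to see that $\Nabla$ is preserved, I would argue that the Hitchin connection is \emph{canonically} built from $\Gamma$-equivariant data. The connection is $\Nabla_V = \Nablat_V - u(V)$ with $u(V)$ given by \eqref{equ}, \eqref{eqo}, and $u(V)$ is assembled out of: the metric $g_\sigma$ (hence $G(V)$, via $\tilde G(V)=-V[g^{-1}]$), the Levi-Civita connection $\nabla_\sigma$, the divergence $\delta_\sigma$, and the Ricci potential $F_\sigma$. Each of these is natural with respect to diffeomorphisms and depends only on $(\omega, I_\sigma)$; since $\Gamma$ preserves $\omega$ (it acts by symplectomorphisms, being a lift of such) and $I$ is $\Gamma$-equivariant, each ingredient transforms equivariantly: $\phi_\gamma^* g_{\gamma\cdot\sigma} = g_\sigma$, $\phi_\gamma^* F_{\gamma\cdot\sigma} = F_\sigma$, and the Levi-Civita connection and divergence pull back accordingly. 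The trivial connection $\Nablat$ is visibly $\Gamma$-invariant. Therefore $\gamma\circ u(V)\circ\gamma^{-1} = u(\gamma_*V)$ as operators on $C^\infty(M,\L^k)$, and this identity is exactly the statement that $\Nabla$ is $\Gamma$-invariant.

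The main obstacle — really a bookkeeping point rather than a deep one — is verifying the equivariance of the family of $\bar\partial$-operators and of $u(V)$ with complete care about how $\gamma$ acts: one must track that the lift $\gamma$ on $\L$ is compatible with the $k$-th tensor power, that pullback of sections converts $\nabla^{0,1}_{\gamma\cdot\sigma}$ to $\nabla^{0,1}_\sigma$ (using preservation of $\nabla$ together with equivariance of $I$), and that the various contraction/trace operations defining $\Delta_B$ and $\nabla^2_{X,Y}$ commute with pullback. None of these steps is hard, but they must all be checked; the conceptual content is simply that every piece of the construction is diffeomorphism-natural and built from $\Gamma$-equivariant tensors. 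I would present this as: (i) define the action, (ii) prove $H^{(k)}$ invariant via the $\bar\partial$-operator intertwining, (iii) prove $u(V)$ equivariant term-by-term, concluding invariance of $\Nabla$.
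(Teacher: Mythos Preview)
Your proposal is correct and is precisely the elaboration one would expect; the paper itself gives no proof at all, merely asserting the lemma as an immediate consequence of the setup (``In this case we immediately get the following invariance''). Your term-by-term verification that $H^{(k)}$ and $u(V)$ are built naturally from $\Gamma$-equivariant data is exactly the content the paper is leaving implicit.
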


\begin{remark}
\label{rem2}
We remark that if $M$ is not compact, but we know there exist a family of 
functions $F : \T \to C^\infty(M)$ which solves
\[
 \Ric = n\omega + 2i\partial \dbar F,
\]
then all the rest of the proof of Theorem \ref{HCE} is local and thus it applies 
in the noncompact case as well, and the theorem remains valid in this more 
general case.  
We further observe from the above argument that if the family $I$ satisfies 
$F_\sigma = 0$ for all $\sigma\in \T$, then the above construction also gives 
a Hitchin connection, which in that case is simply given by 
\[
 u(V) = -\frac 1 {4k} \Delta_{G(V)}.
\]
An example of this is if $M$ is a torus and $I$ is a family of linear complex 
structures on $M$, see e.g. \cite{AB}.  
\end{remark}

\section{Non-negative polarizations on moduli spaces}
\label{sect3}
In this section we review the setting and results from \cite{A3} and we discuss the immediate generalizations to surfaces with marked points.

Let $\Sigma$ be a closed oriented surface and let $R$ be a finite set of points on $\Sigma$ and set $\tilde{\Sigma} = \Sigma - R$.
\begin{defn}
  A system $\tilde{P}$ of $q$ disjoint closed curves on $\tilde{\Sigma}$ is called \emph{admissible} if no two curves from the system are homotopic on $\tilde{\Sigma}$ and none of the curves are null-homotopic on $\tilde{\Sigma}$  nor homotopic on $\tilde{\Sigma}$ to a curve which is contained in a disc-neighborhood of one of the points in $R$.
\end{defn}
For an admissible system of curves $\tilde{P}$ on $\tilde{\Sigma}$, let $\bar{\Sigma}$ be the complement in $\tilde{\Sigma}$ of the curves in $\tilde{P}$. Suppose $c_0$ is any assignment of conjugacy classes of $\SU(2)$ to each of the points in $R$. Let $N$ be the moduli space of flat $\SU(2)$-connections on $\tilde{\Sigma}$ with holonomy around each of the points in $R$ contained in the conjugacy classes determined by $c_0$.

We let $h_{\tilde{P}} : N \to [-2,2]^{\tilde{P}}$ be the map which maps a connection to the trace of the holonomy around the curves in $\tilde{P}$. We let $N_c = h^{-1}_{\tilde{P}}(c)$ for all $c \in [-2,2]^{\tilde{P}}$. Consider the moduli space $\bar{N}$ consisting of flat connections on $\bar{\Sigma}$ with holonomy around each of the points in $R$ contained in the conjugacy class determined by $c_0$. Let $\bar{N}^c$ be the subspace of $\bar{N}$ consisting of the connections, which also has holonomy around each of the two boundary components corresponding to any curve $\gamma \in \tilde{P}$ given by $c(\gamma)$. The projection map
\begin{align*}
	\pi : N \to \bar{N}
\end{align*}
induces projection maps
\begin{align*}
	\pi_c : N_c \to \bar{N}^c.
\end{align*}
For each of the conjugacy classes $c(\gamma) \in [-2,2]$, $\gamma \in \tilde P$, we choose an element in the conjugacy class and let $Z_{c(\gamma)}$ be the centralizer of this element in $c(\gamma)$. Hence we see that for $c(\gamma) \in (-2,2)$, we have that $Z_{c(\gamma)} \isom \U(1)$, and for $c(\gamma) = \pm 2$, we have that $Z_{c(\gamma)} \isom \SU(2)$. Furthermore, if we have a flat connection $\bar{A}$ on $\bar{\Sigma}$, representing a point in $\bar{N}^c$, we define $Z_{\bar A}$ to be the automorphism group of $\bar{A}$. Now fix a flat connection $A$ on $\tilde{\Sigma}$ such that $[A] \in N_c$ and $\pi_c([A]) = [\bar{A}]$. If we fix parametrizations of each of the components of a tubular neighborhood of $\tilde{P}$ by $S^1 \times (-1,1)$, which for each $\gamma \in \tilde P$ maps $S^1 \times \{0\}$ to $\gamma$, we can assume that $A$ restricted to each component of this tubular neighborhood is of the form $A = \xi_\gamma \, d\theta$, where $\theta$ is a coordinate on $S^1$ and $\xi_\gamma \in \su(2)$ such that $\exp(\xi_\gamma) \in c(\gamma)$ is the chosen element in the conjugacy class for all $\gamma \in \tilde{P}$. We can now associate to any element in the Lie group
\begin{align*}
	z \in Z_c = \prod_{\gamma \in \tilde{P}} Z_{c(\gamma)}
\end{align*}
a broken gauge transformation $g_z$ with support in the chosen tubular neighborhood of $\tilde{P}$, such that the restriction of $g_z$ to the connected component around $\gamma$ is given by $g = \exp(\psi(t)\eta(\gamma))$, where $z(\gamma) = \exp(\eta(\gamma))$ and $\psi : (-1,1) \to [0,1]$ is identically zero on $(0,1)$ and near $-1$, and it is identically $1$ on $(-\eps,0]$, for some small positive $\eps$.

From this, it is clear that the Lie group $Z_{\bar{A}}$ acts on $Z_c$ and we have the following Lemma from \cite{A3}.
\begin{lem}
  We have a smooth $Z_{\bar{A}}$-invariant surjective map
  \begin{align*}
		\tilde{\Phi}_A : Z_c \to \pi_c^{-1}([\bar{A}]),
  \end{align*}
  given by mapping $g \in Z_c$ to $g^*A$. This map induces an isomorphism
  \begin{align*}
		\Phi_A : Z_c/Z_{\bar A} \to \pi_c^{-1}([\bar{A}]).
  \end{align*}
\end{lem}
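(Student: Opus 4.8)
The plan is to prove the statement by playing off the concrete differential‑geometric description of $\tilde\Phi_A$ in terms of broken gauge transformations against the algebraic description of the fibre $\pi_c^{-1}([\bar A])$ in terms of holonomy representations: the holonomy picture yields bijectivity cleanly, while the gauge‑theoretic picture yields smoothness and $Z_{\bar A}$‑invariance. The hardest point, which I discuss at the end, is matching these two descriptions.

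\emph{Well‑definedness and smoothness.} Since $g_z$ is supported in the chosen tubular neighbourhood of $\tilde P$ and is trivial near its outer ends, its restriction to $\bar\Sigma$ is an honest gauge transformation; hence $(g_z^*A)|_{\bar\Sigma}$ is gauge equivalent to $A|_{\bar\Sigma}$, which represents $[\bar A]$, and the holonomy of $g_z^*A$ around each point of $R$ is unchanged. The one point needing argument is that $g_z^*A$ extends to a \emph{smooth} flat connection on all of $\tilde\Sigma$: on the collar of $\gamma$ one has $g_z^*A = \Ad_{\exp(-\psi(t)\eta(\gamma))}(\xi_\gamma)\,d\theta + \psi'(t)\eta(\gamma)\,dt$, and the two limiting values of the $d\theta$‑coefficient from the two sides of $\gamma$ agree precisely because $z(\gamma)\in Z_{c(\gamma)}$ commutes with $\xi_\gamma$ — automatic when $c(\gamma)\in(-2,2)$, as then $Z_{c(\gamma)}$ is the maximal torus whose Lie algebra contains $\xi_\gamma$, and the cases $c(\gamma)=\pm2$ are handled similarly (using that the relevant holonomy is central, after a harmless local adjustment absorbing a transition function, which exists since $\SU(2)$‑bundles over surfaces are trivial). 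Because $z(\gamma)$ commutes with $\exp(\xi_\gamma)$ the holonomy of $g_z^*A$ around $\gamma$ stays in the class $c(\gamma)$, so $g_z^*A\in N_c$ with $\pi_c([g_z^*A])=[\bar A]$; on the other hand the $dt$‑term integrates to a factor $z(\gamma)$ along any curve crossing $\gamma$ once, which is exactly what makes $\tilde\Phi_A$ non‑constant. Smooth dependence on $z$ is immediate from the formula for $g_z$.

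\emph{Descent to $Z_c/Z_{\bar A}$.} An element $\phi\in Z_{\bar A}$ is a gauge transformation of $\bar\Sigma$ fixing $\bar A$, i.e.\ covariantly constant for $\bar A$; since $\bar A$ is in the standard form $\xi_\gamma\,d\theta$ near each boundary circle of $\bar\Sigma$, the restriction of $\phi$ there is determined by a single element of $Z_{c(\gamma)}$, and this is precisely the data of the $Z_{\bar A}$‑action on $Z_c$ referred to in the statement. I would then check $\tilde\Phi_A(\phi\cdot z)=\tilde\Phi_A(z)$ by assembling, out of $\phi$ and the two broken gauge transformations involved, an \emph{honest} gauge transformation of $\tilde\Sigma$ carrying $g_{\phi\cdot z}^*A$ to $g_z^*A$; this is a direct manipulation with the collar formulas. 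Thus $\tilde\Phi_A$ descends to $\Phi_A\colon Z_c/Z_{\bar A}\to\pi_c^{-1}([\bar A])$.

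\emph{Bijectivity, and conclusion.} Fixing base points, van Kampen's theorem presents $\pi_1(\tilde\Sigma)$ from $\pi_1(\bar\Sigma)$ by iterated amalgamated products and HNN extensions along the curves of $\tilde P$, and a flat connection on $\tilde\Sigma$ restricting to $\bar A$ on $\bar\Sigma$ is the same as a choice, for each $\gamma\in\tilde P$, of an element conjugating the two boundary holonomies of $\bar A$ along $\gamma$; once the choice coming from $A$ itself is fixed, every other differs from it by a unique element of $Z_{c(\gamma)}$, and by the $dt$‑term computation above the broken gauge transformation $g_z$ realizes exactly the choice labelled by $z$. Surjectivity is then clear: given $[A']\in\pi_c^{-1}([\bar A])$, conjugate $A'$ so that its restriction to $\bar\Sigma$ equals $\bar A$ exactly, and read off the collection $z'$ of conjugating elements with $\tilde\Phi_A(z')=[A']$. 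For injectivity, a gauge transformation of $\tilde\Sigma$ taking $g_z^*A$ to $g_{z'}^*A$ restricts on $\bar\Sigma$ to an automorphism of $\bar A$, hence to an element of $Z_{\bar A}$, whose action on the conjugating elements recovers $z'$ from $z$, giving $[z]=[z']$ in $Z_c/Z_{\bar A}$. Finally, $Z_c$ is compact and $\tilde\Phi_A$ continuous, so $\Phi_A$ is a continuous bijection with compact source, hence a homeomorphism; that it is a diffeomorphism (of smooth manifolds, resp.\ orbifolds, according to whether $c$ is a regular or a special value) follows from the corresponding infinitesimal gluing statement. The main obstacle is exactly the identification, in this last step, of the broken‑gauge‑transformation construction with the algebraic gluing parameter, together with the bookkeeping that makes the degenerate cases $c(\gamma)=\pm2$ fit the smooth, and then orbifold, picture cleanly.
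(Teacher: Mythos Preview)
The paper does not prove this lemma at all: it is introduced with the words ``we have the following Lemma from \cite{A3}'' and no argument is supplied. So there is no proof in the paper against which to compare your attempt.

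That said, your proposal is the standard argument one would expect to find in \cite{A3}: construct $\tilde\Phi_A$ explicitly via the broken gauge transformations $g_z$, check smoothness on the collar, and then establish bijectivity of $\Phi_A$ by passing to the holonomy/character-variety description and using van Kampen (amalgamated products and HNN extensions along the curves of $\tilde P$). The outline is sound and the identification of the gluing parameter with the $dt$-term in $g_z^*A$ is the right mechanism.

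One point deserves more care than you give it. In the degenerate cases $c(\gamma)=\pm 2$ you need $z(\gamma)$ to commute with $\xi_\gamma$ itself, not merely with $\exp(\xi_\gamma)$, for the two limiting values of the $d\theta$-coefficient to match across $\gamma$. When $c(\gamma)=2$ one can simply take $\xi_\gamma=0$, but when $c(\gamma)=-2$ the centraliser $Z_{c(\gamma)}=\SU(2)$ does not commute with a nonzero $\xi_\gamma$; your parenthetical ``harmless local adjustment absorbing a transition function'' is gesturing at the right fix (one can modify $A$ by an honest gauge transformation so that it is literally trivial on the collar, at the cost of a transition function across $\gamma$ that the broken gauge transformation then modifies), but as written this is not an argument. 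Similarly, the final upgrade from homeomorphism to ``diffeomorphism of orbifolds'' at singular values of $c$ is asserted rather than proved; the paper sidesteps this entirely by working with the pulled-back line bundle $\calL_{N,A}$ over $Z_c$ rather than over the possibly singular fibre, so the regularity of $\Phi_A$ at singular points is not actually needed downstream.
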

We observe that $Z_{\bar A}$ is isomorphic to a product of Lie groups. The product is index the components of $\bar{\Sigma}$ and the Lie groups are of sub-groups of $\SU(2)$ from the following list: $\SU(2)$, $Z_{\SU(2)} = \{ \pm \Id \}$, or a conjugate of $\U(1) \subset \SU(2)$.

We denote by $\calL_N$ the Chern--Simons line bundle over $N$
constructed in \cite{Fr}. $\calL_N$ is a topological complex line bundle over $N$. Moreover, there is a well-defined notion of parallel transport in this bundle along any curve in $N$ which can be lifted to a piecewise $C^1$-curve of connections. Over the dense smooth part $N'$ of $N$, $\calL_N$ is equipped with a preferred Chern--Simons connection, whose parallel transport induces this parallel transport.

Since $\calL_N$ is constructed in \cite{Fr} on the space of connections on $\tilde{\Sigma}$ with holonomy contained in $c_0$, we see in fact that we get a well-defined line bundle $\calL_{N,A}$ over $Z_cA \isom Z_c$, with an induced action of $Z_{\bar{A}}$, with the property that there is a natural $Z_{\bar A}$-equivariant isomorphism from $\calL_{N,A}$ to $\tilde{\Phi}^*_A(\calL_N)$. From this we see that the restriction of smooth sections of $\calL^k_N$ to $\pi_c^{-1}([\bar{A}])$ gets pulled back by $\tilde{\Phi}_A$ to $Z_{\bar{A}}$-invariant smooth sections of the smooth bundle $\calL^k_{N,A}$ over $Z_c$. This gives us a means to use differential geometric techniques to study these restrictions, even though these fibers sometimes are singular.
\begin{defn}
  The Bohr--Sommerfeld set $B_k(\tilde{P})$ associated to $\tilde{P}$ on $\Sigma'$ is by definition the subset of $c$'s in $h_{\tilde{P}}(N) \subset [-2,2]^{\tilde{P}}$, for which the holonomy in $\calL_N^k|_{N_c}$ along the fibers of $\pi_c$ is trivial.
\end{defn}
We remark that if $c \in B_k(\tilde{P})$, there is a unique complex line $\calL_{c,k}$ over $\bar{N}^c$ and a preferred isomorphism
\begin{align*}
		\pi_c^*(\calL_{c,k}) \isom \calL_N^k|_{N_c}.
\end{align*}
Let $\bar{\sigma}$ be a complex structure on $\bar{\Sigma}$ with the following property:
\begin{enumerate}
  \item The complex structure $\bar{\sigma}$ restricted to each of the components of a tubular neighborhood of the curves in $\tilde{P}$ are conformally equivalent to semi-infinite cylinders.
  \item The complex structure $\bar{\sigma}$ extends over the points in $R$.
\end{enumerate}
The following theorem is an immediate generalization of Theorem~5.1 in \cite{A3}.
\begin{thm}
  The structure $(\tilde{P},\bar{\sigma})$ induces a non-negative polarization $F_{\tilde{P},\bar{\sigma}}$ on $N$, with the following properties:
  \begin{itemize}
    \item The coisotropic leaves of $F_{\tilde{P},\bar{\sigma}}$ are given by the fibers $N_c$, $c \in [-2,2]^{\tilde{P}}$.
    \item The isotropic leaves of $F_{\tilde{P},\bar{\sigma}}$ in $N_c$ are fibers of $\pi_c : N_c \to \bar{N}^c$, for all $c \in [-2,2]^{\tilde{P}}$.
  \end{itemize}
\end{thm}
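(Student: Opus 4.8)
The plan is to reduce the statement to Theorem~5.1 of \cite{A3}: the construction and all the local differential-geometric verifications are identical, and the only genuinely new feature is the presence of the marked points $R$ carrying the prescribed conjugacy classes $c_0$, which are simply absorbed into the parabolic data. Throughout I abbreviate $F := F_{\tilde P,\bar\sigma}$.

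First I would use $\bar\sigma$ to equip each reduced moduli space $\bar N^c$ with a Kähler structure. By property~(1), cutting $\tilde\Sigma$ along $\tilde P$ and imposing $\bar\sigma$ exhibits $\bar\Sigma_{\bar\sigma}$ as a (possibly disconnected) Riemann surface with cylindrical ends, one pair of ends per curve $\gamma\in\tilde P$; by property~(2) the complex structure moreover completes across the points of $R$. Thus $\bar\Sigma_{\bar\sigma}$ is conformally a punctured Riemann surface, with punctures indexed by $R$ together with the two boundary circles of each $\gamma\in\tilde P$. The Mehta--Seshadri theorem then identifies $\bar N^c$ with a moduli space of parabolic semistable $\SU(2)$-bundles with fixed parabolic weights --- at the $R$-punctures determined by $c_0$, at the $\tilde P$-punctures determined by $c$ --- and the Goldman form makes $\bar N^c_{\bar\sigma}$ a Kähler manifold. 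Relative to \cite{A3} the only change is the extra parabolic points coming from $R$, which the parabolic theory treats on exactly the same footing.

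Next I would define the candidate. Along the fibre $N_c = h_{\tilde P}^{-1}(c)$ put
\[
 F\big|_{N_c} \;=\; (d\pi_c)^{-1}\bigl(\pi_c^{*}\,T^{1,0}_{\bar\sigma}\bar N^c\bigr)\ \subset\ TN_c\otimes\bC\ \subset\ TN\otimes\bC .
\]
Since the Goldman twist flows along the curves of $\tilde P$ are the Hamiltonian flows of the trace functions $[A]\mapsto\tr\Hol_\gamma([A])$, the fibres $N_c$ are coisotropic, $\ker(d\pi_c|_{N_c})$ is the symplectic null distribution of $\omega|_{N_c}$, and $\omega|_{N_c}$ descends to the Goldman form on $\bar N^c$. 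Hence $F$ is isotropic, and a dimension count ($\dim_{\bC}\bar N^c = \tfrac12\dim_{\bR}N - \lvert\tilde P\rvert$, to which the $\lvert\tilde P\rvert$ vertical directions of $\ker(d\pi_c)\otimes\bC$ are restored by $(d\pi_c)^{-1}$) shows $F$ is Lagrangian. Involutivity holds because $\ker(d\pi_c)$ is involutive (fibres of a submersion), $T^{1,0}_{\bar\sigma}\bar N^c$ is involutive (integrability of $\bar\sigma$), and the preimage under $d\pi_c$ of an involutive distribution containing the vertical is again involutive; and brackets of sections of the foliation $\{N_c\}_c$ stay tangent to it. Non-negativity is immediate, since for $v\in F$ one has
\[
 \tfrac1i\,\omega(\bar v,v) \;=\; \tfrac1i\,\omega_{\bar N^c_{\bar\sigma}}\bigl(\overline{d\pi_c v},\,d\pi_c v\bigr)\ \ge\ 0
\]
by the Kähler positivity on $\bar N^c_{\bar\sigma}$, with equality precisely when $d\pi_c v = 0$. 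The same computation shows $F\cap\bar F = \ker(d\pi_c)\otimes\bC$ and $F+\bar F = TN_c\otimes\bC$, which gives exactly the stated description: the coisotropic leaves are the $N_c$ and the isotropic leaves inside $N_c$ are the fibres of $\pi_c$.

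The step where I expect the real work to lie is verifying that $F$ is a genuine \emph{smooth} subbundle of $TN\otimes\bC$ over all of $N$, including the singular fibres $N_c$ lying over the boundary strata of $h_{\tilde P}(N)\subset[-2,2]^{\tilde P}$, where some $c(\gamma)=\pm2$, the centralizer $Z_{c(\gamma)}$ jumps from $\U(1)$ to $\SU(2)$, the twist torus degenerates, and $\pi_c$ fails to be a submersion. As in \cite{A3}, I would not argue on $N$ directly near such strata but instead pull everything back along the maps $\tilde\Phi_A\colon Z_c\to\pi_c^{-1}([\bar A])$ and use the smooth line bundle $\calL_{N,A}$ over $Z_cA\cong Z_c$ from the preceding Lemma, on which the relevant distributions are manifestly smooth and $Z_{\bar A}$-equivariant and descend to yield the polarization near the singular locus; one then checks that the two bullet points persist there. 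Everything else is a line-by-line transcription of the proof of Theorem~5.1 in \cite{A3}, with the marked-point data $(R,c_0)$ carried along unchanged.
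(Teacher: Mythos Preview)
Your proposal is correct and matches the paper's approach: the paper does not supply a separate proof but simply declares the result to be an immediate generalization of Theorem~5.1 in \cite{A3}, and your outline is precisely a careful fleshing-out of that generalization, with the marked points $(R,c_0)$ absorbed into the parabolic data via Mehta--Seshadri exactly as you describe. The handling of the singular strata through the equivariant maps $\tilde\Phi_A$ is also the mechanism the paper sets up in the preceding Lemma for just this purpose.
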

\begin{defn}
  Let $H^{(k)}_{\tilde{P},\bar{\sigma}}$ denote the vector space of distributional sections of $\calL_N^k$ over $N$, which are covariant constant along the directions of $F_{\tilde{P},\bar{\sigma}}$.
\end{defn}
We have the following factorization theorem, which is an analogue of the factorization theorem in \cite{A2}.
\begin{thm}
  We have the following natural isomorphism:
  \begin{align*}
		H_{\tilde P, \bar \sigma}^{(k)} \simeq \bigoplus_{c \in B_k(\tilde P)} H^0(\bar{N}^c_{\bar \sigma},\calL_{c,k}).
  \end{align*}
\end{thm}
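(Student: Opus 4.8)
The plan is to decompose the space of distributional sections along the Bohr--Sommerfeld set and then identify each summand with a holomorphic section space downstairs, using the differential-geometric description of the fibers $\pi_c^{-1}([\bar A])$ set up above via the broken gauge transformations. First I would recall that a distributional section $s \in H^{(k)}_{\tilde P,\bar\sigma}$ is, by definition, covariant constant along the leaves of $F_{\tilde P,\bar\sigma}$; since the isotropic leaves of $F_{\tilde P,\bar\sigma}$ inside $N_c$ are precisely the fibers of $\pi_c : N_c \to \bar N^c$, the restriction of $s$ to $N_c$ must be covariant constant along these fibers. A covariant constant section of $\calL_N^k|_{N_c}$ along the fibers of $\pi_c$ can exist (and be nonzero) only when the holonomy of $(\calL_N^k,\nabla)$ around these fibers is trivial, i.e. only when $c \in B_k(\tilde P)$. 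This forces $\supp s \subseteq \bigcup_{c \in B_k(\tilde P)} N_c$, and since $B_k(\tilde P)$ is discrete (it is cut out by integrality conditions, as in the closed-surface case), the support is a disjoint union and $s$ splits canonically as $s = \sum_{c \in B_k(\tilde P)} s_c$ with $s_c$ supported on $N_c$.

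Next, for fixed $c \in B_k(\tilde P)$, I would identify the space of admissible $s_c$ with $H^0(\bar N^c_{\bar\sigma}, \calL_{c,k})$. By the remark following the definition of $B_k(\tilde P)$, triviality of the holonomy gives a preferred line bundle $\calL_{c,k}$ over $\bar N^c$ together with an isomorphism $\pi_c^*(\calL_{c,k}) \cong \calL_N^k|_{N_c}$. Under this isomorphism, a section of $\calL_N^k|_{N_c}$ that is covariant constant along the fibers of $\pi_c$ is exactly the pullback $\pi_c^*(t)$ of a section $t$ of $\calL_{c,k}$ over $\bar N^c$. So as a distributional section of $\calL_N^k$ supported on $N_c$, the datum of $s_c$ is equivalent to the datum of $t$; what remains is to see that the \emph{remaining} covariant-constancy conditions on $s_c$ — those coming from the non-negative (complex) directions of $F_{\tilde P,\bar\sigma}$, which descend to the complex polarization on $\bar N^c$ induced by $\bar\sigma$ — translate into the condition $\dbar t = 0$, i.e. $t \in H^0(\bar N^c_{\bar\sigma},\calL_{c,k})$. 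Here I would invoke exactly the analogue of the argument in \cite{A2}: the non-negative polarization $F_{\tilde P,\bar\sigma}$ is built so that, transverse to the isotropic foliation, it restricts on each $\bar N^c$ to the Kähler polarization coming from the complex structure $\bar\sigma$ (which extends over the points of $R$ by property (2) and makes the ends cylindrical by property (1)), so covariant-constancy of the distributional section in those directions is precisely holomorphicity of $t$ downstairs.

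Finally I would assemble these fiberwise identifications into the claimed direct-sum isomorphism, checking naturality (compatibility with the $Z_{\bar A}$-equivariant structures and with the preferred Chern--Simons trivializations, so that the isomorphism does not depend on auxiliary choices such as the reference connection $A$ or the cutoff function $\psi$). The main obstacle I anticipate is the analytic part of the middle step: making rigorous sense of "covariant constant distributional section along a singular foliation" near the singular fibers $N_c$ with $c$ on the boundary of $h_{\tilde P}(N)$, where $\pi_c$ is not a submersion and $Z_{\bar A}$ can jump from a torus to a copy of $\SU(2)$. This is handled exactly as in \cite{A3}: one pulls everything back by $\tilde\Phi_A$ to the smooth space $Z_c A \cong Z_c$, where $\calL_{N,A}^k$ is an honest smooth bundle, works with $Z_{\bar A}$-invariant smooth sections there, and then shows the invariant push-forward to $\pi_c^{-1}([\bar A])$ gives the correct distributional object; the integrality built into $c \in B_k(\tilde P)$ is what guarantees the $Z_{\bar A}$-invariant covariant constant section exists on the possibly-singular fiber. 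The remaining bookkeeping — that distinct $c$'s contribute independently and span everything — is immediate from the discreteness of $B_k(\tilde P)$ and the Jeffrey--Weitsman-type dimension count.
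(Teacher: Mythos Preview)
Your proposal is correct and follows essentially the same route as the paper's proof: both argue that covariant constancy along the isotropic leaves forces the support into $h_{\tilde P}^{-1}(B_k(\tilde P))$, then restrict to each $N_c$ to obtain a section of $\calL_{c,k}$ on $\bar N^c$, and finally use the complex directions of $F_{\tilde P,\bar\sigma}$ to deduce holomorphicity downstairs. The paper's proof is a terse three-sentence sketch deferring to \cite{A2}; your version is more fleshed out, in particular your treatment of the singular fibers via the pullback $\tilde\Phi_A$ to $Z_c$ and the $Z_{\bar A}$-invariance (which the paper sets up beforehand but does not explicitly invoke in the proof), and your appeal to a dimension count for surjectivity, which the paper leaves implicit in the reference to \cite{A2}.
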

\begin{proof}
   The theorem follows directly from the arguments presented in \cite{A2}. First one observes that for any $c \in h_{\tilde P}(N)$, the holonomy is trivial along some generic fiber of $\pi_c$ if and only if it is trivial along all the generic fibers of $\pi_c$. This follows since the symplectic annihilator of $TN_c$ is $\ker(\pi_c)_*$ at a generic point of $N_c$. From this one concludes that the support of any distribution in $H^{(k)}_{\tilde P,\bar \sigma}$ must be contained in $h^{-1}_{\tilde P}(B_k(\tilde{P}))$. For each $c$ in $B_k(\tilde{P})$ one then observes that a distribution in $H^{(k)}_{\tilde P, \bar \sigma}$ can be restricted to $N_c$, and here it must be covariant constant along the fibers of $\pi_c$ and hence induces a section in $\calL_{c,k}$ over $\bar{N}^c$. By analyzing the distributional section restricted to $N_c$ in the transverse directions to the fibers of $\pi_c$, one finds that the induced section of $\calL_{c,k}$ over $\bar{N}^c$ must be holomorphic with respect to the complex structure induced on $\bar{N}^c$ by $\bar{\sigma}$.
\end{proof}
Suppose we now have a complex structure $\tilde{\sigma}_0$ on $\tilde{\Sigma}$, which extends over $\Sigma$. We now construct a family of complex structures $\tilde{\sigma}_t$ on $\tilde{\Sigma}$, obtained from $\tilde{\sigma}_0$ by cutting $\tilde{\Sigma}$ along each of the curves in $\tilde{P}$ and gluing in flat cylinders of length $t$ to each of the two copies of each curve in $\tilde{P}$, for all non-negative $t$. The complex structures $\tilde{\sigma}_t$ on $\tilde{\Sigma}$ induce complex structures on $N$. When identifying the surface we obtain by cutting $\Sigma$ along $\tilde{P}$ and then attaching semi-infinite flat cylinders to all boundary components, with $\bar \Sigma$, we obtain a complex structure on $\bar{\Sigma}$, which we denote $\bar{\sigma}$.

The following theorem is an immediate generalization of Theorem~6.2 of \cite{A2}.

\begin{thm}
  \label{thm10}
  The complex structures on $N$ induced from the complex structures $\tilde{\sigma}_t$ converge to the non-negative polarization $F_{\tilde P, \bar \sigma}$ as $t$ goes to infinity.
\end{thm}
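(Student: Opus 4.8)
The plan is to reduce the convergence statement for the induced complex structures on $N$ to a purely local analysis near the curves in $\tilde P$, following the strategy of Theorem~6.2 in \cite{A2}. First I would recall from Section~\ref{sect2} the description of a complex structure on $N$ in terms of the Hodge star operator: the complex structure $I_{\tilde\sigma_t}$ on $N$ at a flat connection $[A]$ acts on the tangent space $H^1_A(\tilde\Sigma,\ad)$ via the Hodge star determined by $\tilde\sigma_t$, and similarly a non-negative polarization is described by the limiting splitting of the complexified tangent bundle into a coisotropic part, its symplectic-orthogonal isotropic part, and a transverse complex part. So the statement to prove is that, as $t\to\infty$, the $+i$-eigenbundle $T_{\tilde\sigma_t} \subset TN_{\bC}$ converges (in the appropriate sense of convergence of Lagrangian-type subbundles, as made precise in \cite{A3,A2}) to the subbundle defining $F_{\tilde P,\bar\sigma}$, whose coisotropic leaves are the fibers $N_c$ and whose isotropic leaves inside $N_c$ are the fibers of $\pi_c$.

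The key steps, in order, would be: (1) Use a Mayer--Vietoris / gluing decomposition of $H^1_A(\tilde\Sigma,\ad)$ adapted to the decomposition of $\tilde\Sigma$ into $\bar\Sigma$ and the cylindrical neighborhoods of the curves in $\tilde P$; this exhibits the tangent space as built from the cohomology of $\bar\Sigma$ (with boundary conditions) together with contributions localized on the cylinders, matching the structure $N \to \bar N$ with fibers modeled on $Z_c$ via the map $\tilde\Phi_A$ of the Lemma from \cite{A3}. (2) On each inserted flat cylinder of length $t$, analyze harmonic $\ad$-valued $1$-forms with the conformal structure of a long flat cylinder: the exponentially-decaying modes and the non-decaying (constant-in-the-cylinder-direction) modes separate, and as $t\to\infty$ the Hodge star restricted to the relevant finite-dimensional space degenerates so that the holomorphic subspace aligns with the span of $d\theta$-type modes, which are exactly the tangent directions to the fibers of $\pi_c$ and the normal directions to $N_c$. (3) Assemble these local computations: the directions transverse to $N_c$ become ``infinitely stretched'' and their contribution to $T_{\tilde\sigma_t}$ converges to give the coisotropic condition $TN_c \subset T_{\tilde\sigma_t} + \bT_{\tilde\sigma_t}$ in the limit, while inside $N_c$ the complex structure converges to one whose $+i$-eigenspace contains $\ker(\pi_c)_*$, giving the isotropic leaves; on $\bar N^c$ the complex structure converges to the one induced by $\bar\sigma$, because the inserted cylinders become the semi-infinite cylinders of condition (1) in the definition of $\bar\sigma$. (4) Check non-negativity of the limiting polarization and that the limit is independent of the auxiliary choices (parametrizations of tubular neighborhoods, the cutoff $\psi$), which follows since different choices differ by a broken gauge transformation acting within the already-degenerating directions.

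The main obstacle I expect is step (2) together with the precise sense of convergence in step (3): one must control the Hodge star on the long cylinder uniformly, showing that the exponentially small off-diagonal terms coupling the decaying and non-decaying modes genuinely vanish in the limit, and one must phrase ``convergence of complex polarizations to a non-negative polarization'' carefully enough (as in \cite{A3}) that the singular fibers $N_c$ over the boundary of $h_{\tilde P}(N)$ — where $Z_{c(\gamma)}\cong\SU(2)$ rather than $\U(1)$ — are handled correctly; there the fiber of $\pi_c$ is not a torus and one must use the equivariant description via $\calL_{N,A}$ over $Z_c$ to make the differential-geometric estimates despite the singularity. Once the cylinder estimate is in place, the rest is an application of the gluing formula and a repetition of the argument of \cite{A2}, now with marked points, whose presence only affects $\bar\Sigma$ through condition (2) in the definition of $\bar\sigma$ and does not interact with the degeneration along $\tilde P$.
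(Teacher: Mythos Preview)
The paper does not actually prove Theorem~\ref{thm10}; it is stated without argument, prefaced only by the sentence ``The following theorem is an immediate generalization of Theorem~6.2 of \cite{A2}.'' (Later, in the proof of Theorem~\ref{prop1}, the paper also refers to ``examining the proofs of Theorem~10 in \cite{A3}'' for the $C^\infty$-convergence.) So there is nothing in the present paper to compare your sketch against beyond the citation itself.

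That said, your outline is precisely the shape of the argument in \cite{A3} (and its adaptation in \cite{A2}) to which the paper defers: the complex structure on $N$ is induced by the Hodge star on $\ad$-valued harmonic $1$-forms, one decomposes relative to the cylinders around $\tilde P$, and the degeneration comes from the explicit behavior of harmonic forms on a flat cylinder of growing length, with the non-decaying modes giving the real directions of $F_{\tilde P,\bar\sigma}$ and the remaining directions picking up the complex structure of $\bar N^c$ from $\bar\sigma$. Your identification of the main difficulty---uniform estimates on the long cylinder and the handling of the singular fibers where $Z_{c(\gamma)}\cong\SU(2)$---matches exactly what those references address. The only additional point the paper uses, and which you should make sure your write-up provides, is the $C^\infty$ (not merely $C^0$) convergence of the polarizations, since this is invoked later to obtain the expansion $Z_t = Z_\infty + Z_\infty' t^{-1} + R(t)$ in the proof of Theorem~\ref{prop1}; this follows from the exponential (rather than merely polynomial) decay of the non-constant harmonic modes on the cylinder, so your step~(2) already contains the needed ingredient.
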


\section{The asymptotics of the Hitchin connection under degenerations}
\label{sect4}
In this section we prove Theorem~\ref{thm3}. We consider the more
general setting discussed in Theorem~\ref{thm10} from the previous
section. However, we only need the following special cases:
\begin{enumerate}
  \item The surface $\Sigma$ is of genus $g > 1$ and $R$ consists of one point,
  \item The surface $\Sigma$ is a torus and $R$ consists of one point,
  \item The surface $\Sigma$ is a sphere and $R$ consists of four points.
\end{enumerate}
We recall that the moduli space $N$ of interest is the moduli space of flat connections on $\tilde \Sigma$ with holonomy around each of the points in $R$ determined by $c_0$.

In the case (1) we will only be interested in the moduli space $N=M$
of flat connections on $\tilde \Sigma$ with holonomy $c_0=\{-\Id\}$
around the one point $p$ in $R$. Consider a point $\sigma \in \T$. 

A holomorphic vector bundle $E \to \tilde\Sigma_{\tilde\s}$ is
semi-stable if for every proper holomorphic subbundle $F \subset E$ we
have the following conditions on the \emph{slope} $\mu$ of $E$, and $F$
\[
\frac{\deg(F)}{\rk(F)} = \mu(F) \leq \mu(E) = \frac{\deg(E)}{\rk(E)}. 
\] A holomorphic vector bundle is called stable if the inequality is
strict.

To each semi-stable vector bundle there exists a unique (up to
isomorphism) filtration called the Jordan-H\"{o}lder filtration
\[
 0 = E_0 \subset \dots \subset E_m = E,
\]
with the property that the slopes of each of the quotients is the same
as the slope of $E$, i.e.
\[
\mu(E_{i+1}/E_i) = \mu(E),
\]
and each quotient $E_{i+1}/E_i$ is a stable vector bundle. We then
define the associated grated vector bundle
\[
\Gr(E) = \bigoplus_i (E_{i+1}/E_i).
\]
Two holomorphic vector bundles $E$, $E'$ are S-equivalent if and only
if their associated grated vector bundles are isomorphic, i.e.
\[
E \sim_S E' \quad \text{if and only if} \quad \Gr(E) \simeq \Gr(E').
\]

\begin{thm}[Narasimhan \& Seshadri]
  The moduli space of S-equivalence classes of semi-stable bundles of
  rank $n$ and determinant $\mathcal{O}_\s([p])$ is a smooth complex
  algebraic projective variety isomorphic as a K\"{a}hler manifold to $M_\s$
\end{thm}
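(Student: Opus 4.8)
The plan is to follow the standard route through non-abelian Hodge theory on a curve, in the twisted rank-$n$ situation at hand. Write $\tilde\Sigma = \Sigma \setminus \{p\}$ and recall that $M_\s$ is, by construction, the moduli space of flat $\SU(n)$-connections on $\tilde\Sigma$ whose holonomy around $p$ lies in the central conjugacy class $\{\zeta \Id\}$, $\zeta = e^{2\pi i/n}$ (for $n=2$ this is $\{-\Id\}$). First I would realise the moduli space of S-equivalence classes of semistable bundles with determinant $\calO_\s([p])$ as a projective variety via Mumford's geometric invariant theory: such bundles, after a sufficiently positive twist, are parametrised by a locally closed subscheme of a Quot-scheme carrying an $\SL$-action, the GIT quotient is projective, and a Harder--Narasimhan-type argument matches the GIT-(semi)stable points with the (semi)stable bundles. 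Because $\deg \calO_\s([p]) = 1$ is coprime to $n$ (in particular for $n=2$), every semistable bundle is stable, so there are no strictly semistable points, the automorphism group of each bundle is just the centre $\bbZ/n$, and the quotient is a smooth projective variety; smoothness is also visible infinitesimally, the obstruction space $H^2(\Sigma_\s, \End_0 E)$ vanishing on a curve, with holomorphic tangent space at $[E]$ equal to $H^1(\Sigma_\s, \End_0 E)$.

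The analytic heart is the Narasimhan--Seshadri correspondence itself, which I would obtain from the existence and uniqueness of Hermitian--Einstein metrics: on a stable bundle $E$ of slope $1/n$ there is a Hermitian metric, unique up to scale, whose Chern connection has constant central curvature, and fixing the determinant removes the central part so that the induced connection on the associated $\mathrm{PU}(n)$-bundle over $\tilde\Sigma$ is flat with holonomy $\zeta\Id$ around $p$ (equivalently, a representation of the central extension of $\pi_1(\Sigma)$ in which the generator of the centre acts by $\zeta\Id$). On a curve this is the original theorem of Narasimhan and Seshadri for bundles of arbitrary degree; Donaldson's variational proof, and in the relevant parabolic form the work of Mehta--Seshadri and Biquard, yield the same conclusion. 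Conversely, the $(0,1)$-part of such a flat unitary connection is a $\bar\partial$-operator defining a holomorphic structure on $E$, and the coprimality of $\deg$ and $n$ forces the representation to be irreducible and hence $E$ to be stable. This produces a bijection between the closed points of the moduli space of stable bundles and $M_\s$, which in the coprime case consists entirely of irreducible connections.

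It then remains to promote this bijection to an isomorphism of K\"ahler manifolds. For the smooth structure I would run an inverse-function-theorem argument: the map is real-analytic in both directions and its differential at $[E]$ is an isomorphism because Hodge theory on the curve (with the prescribed behaviour at $p$) identifies the harmonic representatives of $H^1(\Sigma_\s, \End_0 E)$, the deformations of the holomorphic bundle, with the harmonic representatives of the space of deformations of the corresponding flat connection. For the complex structure, the holomorphic structure on the moduli of bundles is the one induced by the family of $\bar\partial$-operators, so once one checks that the Hermitian--Einstein metric varies real-analytically with $\s$ and that the splitting into $(1,0)$ and $(0,1)$ parts is the evident one, the complex structure transported to $M$ is exactly $I_\s$. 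For the symplectic form, the Atiyah--Bott computation identifies the Goldman form $\omega$ on $M$, built from the chosen invariant pairing on $\su(n)$, with the natural K\"ahler form on the moduli of bundles coming from the $L^2$-pairing of harmonic $\End_0 E$-valued $(0,1)$-forms; compatibility of $\omega$ with $I_\s$ and positivity of $g_\s$ then follow from the Hermitian--Einstein equation, which gives the K\"ahler identification.

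The main obstacle is the analytic input of the second paragraph: producing, from an abstract stable bundle, a Hermitian--Einstein metric, and in the twisted setting arranging the holonomy around $p$ to be the prescribed central element. This is exactly the content of the rank-$n$ (parabolic) Narasimhan--Seshadri theorem and admits no soft proof; everything else is bookkeeping in geometric invariant theory, deformation theory, and Hodge theory on the curve.
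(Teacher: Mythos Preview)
Your outline is a faithful sketch of the classical proof of the Narasimhan--Seshadri theorem in the coprime case, combining the GIT construction of the moduli space, the analytic correspondence via Hermitian--Einstein metrics (in the spirit of Donaldson's proof), and the Atiyah--Bott identification of the K\"ahler structure. There is nothing mathematically wrong with it as a high-level plan.

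However, the paper does not actually prove this theorem. It is stated as a classical result due to Narasimhan and Seshadri, with the single remark that ``This theorem is proven by using Mumford's Geometric Invariant Theory,'' and is used as a black box. So there is no ``paper's own proof'' to compare against: your proposal is not an alternative argument but rather an expansion of what the paper deliberately leaves to the references \cite{NS1}, \cite{NS2}. If anything, your sketch goes beyond the paper's one-line attribution by also indicating the analytic side (Hermitian--Einstein metrics, Donaldson) and the symplectic identification (Atiyah--Bott), none of which the paper spells out here.
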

This theorem is proven by using Mumford's Geometric
Invariant Theory. 

Hence we see that $\T$ parametrizes complex structures which are all K\"{a}hler with respect to the symplectic structure $\omega$ on $M$. To get uniform notation we will in this case (1) also use the notation $\tilde \T$ for $\T$.

In the cases (2) and (3) we are interested in arbitrary rational holonomies around the points in $R$, hence we need on the algebraic side to consider moduli space of parabolic vector
bundles on $\Sigma$ with the parabolic structures located at the points $R$ with respect to some point $\tilde \sigma$ in the Teichm\"{u}ller space $\tilde \T$ of $\tilde \Sigma$.

\begin{defn}
  Let $\tilde\Sigma_{\tilde\s}$ be a compact Riemann surface with distinct marked
  points $R \subset \tilde\Sigma_{\tilde\s}$, and $E \to \tilde\Sigma_{\tilde\s}$ a holomorphic vector
  bundle of rank $r$. A \emph{parabolic structure} on
  $E \to \tilde\Sigma_{\tilde\s}$ at $p \in S$ is a choice of partial flag
  \[
  E_{p} = E^1_{p} \supset E^2_{p} \supset \dots \supset
  E_{p}^{r(p)} \supset 0
  \]
  with a set of parabolic weights
  \[
  w_1(p) < \dots < w_{r(p)}(p), \quad \text{with} \quad w_{r(p)}(p)
  - w_1(p) < 1.
  \]
  Multiplicities are denoted by $m_j(p) = \dim E_{p}^j - \dim
  E_{p}^{j+1}$.

  A \emph{parabolic vector bundle} on $\tilde\Sigma_{\tilde\s}$ is a holomorphic rank
  $r$ vector bundle $E \to \tilde\Sigma_{\tilde\s}$ with a choice of parabolic
  structure at each marked point.
\end{defn}

In order for the moduli space of parabolic vector bundles to have nice
geometric structure we need to impose stability conditions on the
parabolic vector bundles -- just as in the case of ordinary vector
bundles.

  The \emph{parabolic degree} of a parabolic vector bundle $E \to
  \tilde\Sigma_{\tilde\s}$ is defined by
  \[
  \pdeg(E) = \deg(E) + \sum_{p \in R} \sum_{i} m_i(p)w_i(p).
  \]
The parabolic slope of $E$ is
\[
\mu(E) = \pdeg(E)/\rk(E).
\]
Every holomorphic subbundle $F$ of $E$ naturally has a parabolic
structure at each of the marked points $p \in R$ by defining
\[
F_p\cap E^1_p \supset F_p \cap E^2_p \supset \dots \supset F_p \cap
E_p^{r(p)} \supset 0,
\]
and removing repeated terms. The weights are the largest of the
corresponding parabolic weights from $E$, i.e $w_i^F(p) =
\max_{j}\{w_j \, | \, F_p \cap E_{p}^j = F_p^j\}$.

As with vector bundles we now define \emph{stable} parabolic vector
bundles as those where for each proper subbundle $F \subset E$ we have
\[
\mu(F) = \frac{\pdeg(F)}{\rk(F)} < \frac{\pdeg(E)}{\rk(E)} = \mu(E).
\]

The weights give the connection between the moduli space of parabolic
vector bundles to the moduli space of flat
unitary connections with holonomy around the punctured marked points
being these weights. This is the Mehta--Seshadri theorem \cite{MeSe}.

\begin{thm}[Mehta--Seshadri]
  Let $\tilde\Sigma_{\tilde\s}$ be a surface as above and $R \subset \tilde\Sigma_{\tilde\s}$ a set of marked points of $\tilde\Sigma_{\tilde\s}$. Then there
  is a one-to-one correspondence between the moduli space of
  irreducible unitary connections on $\tilde\Sigma_{\tilde\s} - R$ with
  holonomy around $p \in R$ having eigenvalues
  \[ \{ e^{2\pi i w_1(p)}, e^{2 \pi i w_2(p)},
    \dots, e^{2\pi i w_{r(p)}(p)}\},
    \]
  each $e^{2\pi i w_i(p)}$ with multiplicity $m_i(p)$, and the moduli space
  of parabolic vector bundles with parabolic degree zero on $\tilde\Sigma_{\tilde\s}$
  with weights and multiplicities specified by the above data.
\end{thm}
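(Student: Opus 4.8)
The plan is to prove the correspondence in both directions and then verify the two constructions are mutually inverse, following Mehta and Seshadri's strategy of reducing everything, by passing to a ramified cover, to the Narasimhan--Seshadri theorem recalled above.

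\textbf{From connections to parabolic bundles.} Given an irreducible unitary connection $A$ on $\tilde\Sigma_{\tilde\s}-R$ with the prescribed holonomy eigenvalues, let $E^\circ$ be the associated flat holomorphic bundle on $\tilde\Sigma_{\tilde\s}-R$. Near a marked point $p$ pick a local coordinate $z$ with $p=\{z=0\}$; since the monodromy around $p$ is conjugate to $\diag(e^{2\pi i w_1(p)},\dots,e^{2\pi i w_{r(p)}(p)})$ with multiplicities $m_i(p)$, the bundle $E^\circ$ has, near $p$, a multivalued flat frame of the form $z^{w_i(p)}$ times a single-valued holomorphic frame. Declaring the latter frame to be holomorphic at $z=0$ produces Deligne's canonical extension $E$ of $E^\circ$ across $R$, and the decreasing filtration of each fibre $E_p$ by the spans of the subframes with weights $\ge w_i(p)$ makes $E$ a parabolic bundle with exactly the weights $w_i(p)$ and multiplicities $m_i(p)$. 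A Chern--Weil/residue computation for the flat unitary metric, which has the expected logarithmic singularity along $R$, gives $\deg E = -\sum_{p\in R}\sum_i m_i(p) w_i(p)$, so $\pdeg(E)=0$. Finally, a proper sub-parabolic-bundle $F\subset E$ with $\mu(F)\ge 0$ would be preserved by the flat connection on the complement of $R$, contradicting irreducibility of $A$; hence $E$ is stable.

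\textbf{From parabolic bundles to connections.} This is the substantive direction. Choose a Galois cover $q\colon Y\to\tilde\Sigma_{\tilde\s}$ ramified precisely over $R$, with Galois group $\Gamma_0$ and ramification order $N$ chosen so that $N w_i(p)\in\bbZ$ for all $p$ and $i$. Under the Mehta--Seshadri dictionary, a parabolic bundle $E$ on $\tilde\Sigma_{\tilde\s}$ with these weights corresponds to a $\Gamma_0$-equivariant holomorphic bundle $\tilde E$ on $Y$, and $\deg\tilde E = \abs{\Gamma_0}\,\pdeg(E)=0$; moreover parabolic stability of $E$ is equivalent to $\Gamma_0$-stability of $\tilde E$. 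One then applies the $\Gamma_0$-equivariant version of the Narasimhan--Seshadri theorem on $Y$: the stable bundle $\tilde E$ carries an essentially unique $\Gamma_0$-invariant flat unitary connection, i.e. a $\Gamma_0$-equivariant irreducible unitary representation of $\pi_1(Y)$, and $\Gamma_0$-stability forces this to extend to an irreducible unitary representation of the orbifold fundamental group of $(\tilde\Sigma_{\tilde\s},R)$, namely the quotient of $\pi_1(\tilde\Sigma_{\tilde\s}-R)$ in which the loop around each $p$ has order dividing $N$. Unwinding the identification along the branch locus exhibits this as an irreducible unitary connection on $\tilde\Sigma_{\tilde\s}-R$ whose holonomy around $p$ has eigenvalues $e^{2\pi i w_i(p)}$ with multiplicities $m_i(p)$.

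\textbf{Mutual inverseness and the main obstacle.} Applying the first construction to the connection produced in the second recovers the canonical extension of the underlying flat bundle, which by the normalization of the weights is the original parabolic bundle; conversely, the holonomy of the connection built from the canonical extension of a flat bundle is its original monodromy. Since both constructions work in families by real-analytic means, they identify the two moduli spaces. The main obstacle is the existence assertion in the second direction. Either one pushes the equivariant Narasimhan--Seshadri theorem through the cover, which requires setting up carefully the parabolic-to-equivariant dictionary, checking that stability is matched under it, and bookkeeping the weights against the ramification; or, alternatively, one works directly on $\tilde\Sigma_{\tilde\s}-R$ with a complete metric of Poincar\'e type near the punctures and solves there the singular Hermitian--Einstein equation associated to the parabolic structure, controlling the asymptotics of the solution near $R$ so that the resulting unitary connection has exactly the prescribed eigenvalues --- an analytic problem demanding weighted estimates and a continuity-method argument driven by parabolic stability.
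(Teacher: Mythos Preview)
The paper does not prove this theorem at all: it is stated as the Mehta--Seshadri theorem with a citation to \cite{MeSe} and then used as a black box, so there is nothing to compare your argument against on the paper's side.

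That said, your outline is the standard Mehta--Seshadri strategy and is essentially correct. A couple of points worth tightening. First, your covering argument in the second direction tacitly assumes the weights are rational, since you need $N$ with $N w_i(p)\in\bZ$; this is fine in the context of this paper, which only ever uses rational holonomies, but is worth saying explicitly. Second, in going from a $\Gamma_0$-invariant irreducible unitary representation of $\pi_1(Y)$ to a representation of $\pi_1(\tilde\Sigma_{\tilde\s}-R)$ with the prescribed local monodromy, the passage is not just ``unwinding along the branch locus'': one must check that the isotropy action of the cyclic stabilizer at a ramification point on the fibre of $\tilde E$ has the correct character, which is precisely what the parabolic-to-equivariant dictionary encodes; you allude to this bookkeeping but it is the crux of matching the eigenvalues. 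Finally, your remark that the alternative is to solve a singular Hermitian--Einstein equation with Poincar\'e-type asymptotics is the Biquard/Simpson approach and is indeed the route one takes for irrational weights; mentioning it is appropriate, but it is a separate proof rather than a step in the one you are sketching.
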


We will in the following only be interested in the case of
$\SU(2)$-connections corresponding to rank-$2$ degree $0$ parabolic vector
bundles. Furthermore we will only be interested in the cases where the
Riemann surface is a torus with a single marked point and the sphere
with four marked points.

At the marked points for a rank-$2$ parabolic vector bundle there is
only a two step filtration,
\[
E = E^1_p  \supset E^2_p \supset 0,
\]
for $p \in R$. The weights must satisfy $w_2(p) - w_1(p)<1$ and
$w_1(p) < w_2(p)$. If the parabolic vector bundle should correspond to
a flat unitary connection the parabolic degree of $E$ must be zero, so
\[
0 = \pdeg(E) = \deg(E) + \sum_{p \in S} w_1(p) + w_2(p).
\] 
At each marked point $p \in R$ the holonomy of the connection around
that point is conjugate to $\diag(e^{2 \pi i w_1(p)}, e^{2 \pi i
  w_2(p)})$. Since this matrix must be an $\SU(2)$ matrix
$w_1(p)+w_2(p)$ must be an integer. Since $\deg(E) = 0$ we all in all
have $w_i(p) \in(-\frac{1}{2},\frac{1}{2})$. The consequence is that $w_1(p) + w_2(p) = 0$ and finally that $w_1(p)
= -w_2(p)$. Since $w_1(p) < w_2(p)$ we get that $w_2(p) = s_p \in
[0,\frac{1}{2})$ and $w_1(p) = -s_p \in (-\frac{1}{2},0]$. 

Let $L$ be a proper line subbundle of $E \to \tilde\Sigma_{\tilde\s}$. If we assume $E$
to be parabolically stable then $\pdeg(L) < 0$. For a marked point $p$ the
filtration of $L_p$ has only one step, and is
\[
L_p = L_p \cap E_p^1 \supset L_p \cap E^2_p = \begin{cases} 0 & L \neq
  E^2_p \\ L_p & L_p = E^2_p \end{cases}
\]
In the case $L_p \neq E^2_p$ the weight is $w_1(p) = -s_p$ while if
$L_p = E^2_p$ the weight jumps to $w_1(p) = s_p$. 

In all of the three cases (1) - (3) above, we get a family of complex structures $I$ on $N$ parametrized by $\tilde{\calT}$. We denote $N$ with the complex structure $I(\tilde \sigma)$ by $N_{\tilde \sigma}$ for $\tilde \sigma \in \tilde \T$. We let $H^{(k)}$ denote the vector bundle over $\tilde \calT$, whose fiber over $\tilde \sigma \in \tilde \calT$ is $H^0(N_{\tilde \sigma},\calL^k_N)$.

\begin{lem}
  In the cases (1)---(3) above, we have that $N$ and $I$ satisfy either the assumptions of Theorem~\ref{HCE} or those of Remark~\ref{rem2}, hence in all cases we have a Hitchin connection which is projectively flat.
\end{lem}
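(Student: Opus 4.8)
The strategy is to verify, case by case, the hypotheses of Theorem~\ref{HCE}: that $(N,\omega)$ is a compact prequantizable symplectic manifold carrying a rigid holomorphic family $I$ of K\"ahler structures, that $H^1(N,\bR) = 0$, and that the first Chern class is a multiple $n[\omega/2\pi]$ of the symplectic class --- or else to invoke Remark~\ref{rem2} when $N$ is noncompact or when the relevant Ricci potential family vanishes. In case (1), $N = M$ is the familiar moduli space of rank-$2$ stable bundles of fixed odd determinant on a genus $g > 1$ surface; here everything is classical: $M$ is a smooth compact projective variety with $H^1(M,\bZ) = 0$, $\Pic(M) \cong \bZ$ generated by $\calL$, and $c_1(M) = 2[\omega/2\pi]$ (the anticanonical class is twice the generator), so $n = 2$. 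The family $I$ is the Narasimhan--Seshadri family parametrized by $\tilde\calT = \calT$, and its rigidity is established in \cite{AGL} (indeed this is the motivating example there). Thus Theorem~\ref{HCE} applies directly and yields a projectively flat Hitchin connection.

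For cases (2) and (3), $N$ is a moduli space of rank-$2$ parabolic bundles of parabolic degree zero with the weight data described above --- a once-punctured torus, respectively a four-punctured sphere. First I would note that for generic (rational, non-central) holonomy parameters $s_p$ the moduli space is a smooth compact projective variety: in case (3) it is well known to be $\bbP^1$, and in case (2) it is likewise a smooth rational surface (or point), with $H^1(N,\bR) = 0$ in each instance since these are rational varieties with no odd cohomology. The Mehta--Seshadri identification equips $N$ with the family $I$ of complex structures coming from $\tilde\calT$, and one must check that $c_1(N)$ is a multiple of the symplectic (= determinant-of-cohomology) class; for $\bbP^1$ and for the rational surfaces in question this is automatic since $H^2$ is one-dimensional (or the class is computed explicitly from the parabolic data). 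Rigidity of the parabolic family again follows from the methods of \cite{AGL}, applied to the parabolic setting. Where a boundary case makes $N$ fail compactness or smoothness, or where $c_1$ fails to be a multiple of $[\omega]$ but a global Ricci potential family still exists, one falls back on Remark~\ref{rem2}.

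The main obstacle I anticipate is the bookkeeping for the parabolic cases (2) and (3): one must pin down precisely for which choices of $c_0$ (equivalently, which $s_p$) the moduli space $N$ is smooth and compact, compute $c_1(N)$ in terms of $[\omega]$ in those cases, and --- where that proportionality fails --- produce the global Ricci potential family $F : \tilde\calT \to C^\infty(N)$ demanded by Remark~\ref{rem2}, or verify $F_\sigma \equiv 0$. The rigidity verification, while in principle covered by \cite{AGL}, also requires checking that the parabolic Narasimhan--Seshadri/Mehta--Seshadri family satisfies the $\dbar$-closedness of $G(V)$; this is the one genuinely technical input and I would either cite it from \cite{AGL} or sketch the Cech-cocycle computation showing the symbol $G(V)$ is holomorphic. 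Everything else --- prequantizability (the line bundle $\calL_N$ is given), $H^1 = 0$ (rationality), compactness in the generic range --- is either standard or already recorded in the references.
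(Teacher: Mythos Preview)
Your treatment of case (1) matches the paper's: both simply invoke Hitchin's original result \cite{H}, which establishes exactly the hypotheses of Theorem~\ref{HCE} for the odd-determinant moduli space.

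For cases (2) and (3), however, your route diverges from the paper's. The paper's proof is a forward reference: it defers entirely to the explicit analyses in Sections~\ref{4p} and~\ref{elliptic}, where the moduli spaces are computed by hand as character varieties, shown to be topological $2$-spheres (hence $\bbP^1$) for generic holonomy data, and where the Hitchin connection is then identified directly with the KZ/TUY connection via symplectic reduction from products of coadjoint orbits. Your proposal instead attempts to verify the abstract hypotheses of Theorem~\ref{HCE} (compactness, $H^1=0$, $c_1 = n[\omega/2\pi]$, rigidity) for the parabolic moduli spaces. In principle this could work, and once one knows the moduli space is $\bbP^1$ the cohomological conditions are trivial; but the real content---rigidity of the family---you only cite rather than check, and it is not obvious that \cite{AGL} covers the parabolic situation without further argument.

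There is also a factual slip: you describe the case (2) moduli space (once-punctured torus) as ``a smooth rational surface.'' A dimension count (two generators in $\SU(2)$, one trace constraint, quotient by conjugation) gives real dimension $2$, i.e.\ complex dimension $1$; the paper computes it explicitly as a $2$-sphere cut out by the trace identity \eqref{eq:trace-identity}. So both cases (2) and (3) yield $\bbP^1$, not a surface, and your remark that ``$H^2$ is one-dimensional'' is correct for this reason---but not for the reason you give.
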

\proof
In the case (1) this was demonstrated by Htichin in \cite{H}. The cases (2) and (3) follow from the special considerations in Sections \ref{4p} and \ref{elliptic}.
\eproof
Consider the family $\tilde{\sigma}_t$ constructed in the previous section from the starting data $(\tilde \sigma_0, \tilde P)$. Let
\begin{align*}
	P_t(\tilde \sigma_0, \tilde P) : H_{\tilde \sigma_0}^{(k)} \to H_{\tilde \sigma_t}^{(k)}
\end{align*}
be the parallel transport with respect to the Hitchin connection in $H^{(k)}$ over $\tilde \calT$ along the curve $(\tilde \sigma_s)$, $s \in [0,t]$.

Let $c \in [-2,2]^{\tilde P}$ and consider the subspace $N_c \subset N$. Consider a point $x$ in $N'$ ($N'$ being the manifold of smooth points of N), which is also a smooth point of $N_c$. For each $t$, let $I_t$ be the corresponding complex structure on $N$. A covariant constant section $s_t \in H_{\tilde \sigma_t}^{(k)}$, $t \in [0,\infty)$, of the Hitchin connection along the curve $\tilde \sigma_t$ satisfies the following equations:
\begin{align*}
	s_t' = u(\tilde \sigma_t')(s_t),
\end{align*}
and
\begin{align*}
	\nabla_X s_t = -i\nabla_{I_t X}s_t
\end{align*}
for all vector fields $X$ and all $t$. Since the curves in $\tilde P$ are non-intersecting, the corresponding holonomy functions Poisson commute, hence we have that $TN_c$ is coisotropic, thus $TN_c^0 \subset TN_c$, where $(\cdot )^0$ refers to the symplectic complement. We observe that $TN_c^{\perp_t} = I_t(TN_c^0)$, where $(\cdot)^{\perp_t}$ refers to the orthogonal complement with respect to the metric induced by $\omega$ and $I_t$. From this we get the following decomposition:
\begin{align}
  \label{eq9}
	TN|_{N_c} = TN_c \oplus I_t(TN_c^0).
\end{align}
For any section $X$ of $TN|_{N_c}$, we define $X'$ a section of $TN_c^0$ and $X''$ a section of $I_t(TN_c^0)$ such that $X = X' + X''$.
\begin{thm}
  \label{prop1}
  Suppose $s_0 \in H_{\sigma_0}^{(k)}$. Then $s_t|_{N_c}$ only depends on $s_0|_{N_c}$ and we have that
  \begin{align}
    \label{eq10}
		(s_t|_{N_c})' = \tilde{u}_c(\tilde \sigma_t')(s_t|_{N_c}),
  \end{align}
  where $\tilde u_c(\tilde \sigma_t')$ is a second order differential operator acting on $C^\infty(N_c, \L^k_N|_{N_c})$ depending linearly on $\tilde \sigma_t'$. Moreover, the limit
  \begin{align}
		\label{eq11}
		\tilde u_{c,\infty} = \lim_{t \to \infty} \tilde u_c(\tilde \sigma_t')
  \end{align}
  exists, and the operator $\tilde u_{c,\infty}$ is a second order differential operator acting on sections of $\calL^k_N|_{N_c}$, whose kernel consists of sections of $\calL^k_N|_{N_c}$ that are covariant constant along the directions of $F_{\tilde P, \bar \sigma} \cap \bar F_{\tilde P, \bar \sigma}$.
\end{thm}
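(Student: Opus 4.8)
The plan is to derive the restricted equation \eqref{eq10} by analyzing each term in $u(\tilde\sigma_t')$ as given by \eqref{equ} and \eqref{eqo}, and then to track the dependence of all the coefficient functions on the cylinder-insertion parameter $t$ as $t\to\infty$.

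First I would observe that the second covariant-constancy equation $\nabla_X s_t = -i\nabla_{I_t X} s_t$ allows one to express derivatives of $s_t$ in the $I_t(TN_c^0)$-directions (the ``$\prime\prime$''-directions of the decomposition \eqref{eq9}) in terms of derivatives along $TN_c$. Concretely, for $X$ a section of $TN_c^0$, the vector $I_tX$ is a section of $TN_c^{\perp_t}$, so $\nabla_{I_tX}s_t$ is the ``normal'' derivative while $\nabla_X s_t$ is tangential to $N_c$; the holomorphicity equation ties these together up to a factor of $i$. Using this repeatedly, I would show that when one restricts the operator $\Delta_{G(V)} + 2\nabla_{G(V)dF}$ to $N_c$ and then projects onto the $TN_c^0$-component (which is what the decomposition $X = X'+X''$ extracts after applying the output section's covariant derivative), all second-order normal derivatives get re-expressed as operators involving only data along $N_c$. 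This is precisely the mechanism by which $s_t|_{N_c}$ ends up depending only on $s_0|_{N_c}$: the evolution equation \eqref{eq10} closes up on $C^\infty(N_c,\L^k_N|_{N_c})$. The ``second order'' claim for $\tilde u_c$ is then immediate since $\Delta_{G(V)}$ is second order and the substitution does not raise the order.

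Next, for the limit \eqref{eq11}, I would invoke Theorem~\ref{thm10}: the complex structures $I_t$ converge to the non-negative polarization $F_{\tilde P,\bar\sigma}$ as $t\to\infty$, and hence $G(\tilde\sigma_t')$, the Ricci potential $F_{\tilde\sigma_t}$, and their various contractions appearing in $o(V)$ all converge (this convergence should be extracted from the analytic estimates in \cite{A2}, \cite{A3}, which describe the geometry of the inserted flat cylinders). The factor $\tfrac{1}{k+n/2}$ is $t$-independent, so the limit $\tilde u_{c,\infty}$ exists and is again a second-order differential operator on sections of $\calL^k_N|_{N_c}$. To identify its kernel, I would argue that in the limit the $\prime\prime$-directions become exactly the directions of $F_{\tilde P,\bar\sigma}\cap\bar F_{\tilde P,\bar\sigma}$ (the intersection of the limiting polarization with its conjugate, i.e.\ the ``complex'' directions transverse to the isotropic leaves of $\pi_c$), and that a section lies in $\ker\tilde u_{c,\infty}$ iff its normal derivative in those directions vanishes --- i.e.\ iff it is covariant constant there. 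This is the hallmark of the defining equation \eqref{eqcond} degenerating, in the limit, to the covariant-constancy condition defining $H^{(k)}_{\tilde P,\bar\sigma}$.

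The main obstacle I expect is the uniform control of the limit: showing that $\tilde u_c(\tilde\sigma_t')$ converges not just pointwise but in a strong enough sense (on the relevant dense smooth locus, with control near the singular fibers) that the limiting operator genuinely acts on $C^\infty(N_c,\L^k_N|_{N_c})$ and that its kernel is correctly characterized. The singular nature of some fibers $N_c$ forces one to work, as in Section~\ref{sect3}, with the broken-gauge-transformation parametrization $\tilde\Phi_A : Z_c \to \pi_c^{-1}([\bar A])$ and the pulled-back bundle $\calL^k_{N,A}$ over $Z_c$, reducing everything to $Z_{\bar A}$-invariant differential operators on a smooth space; checking that the convergence is compatible with this reduction, and that the flat-cylinder asymptotics of $G$ and $F$ behave well under it, is where the real work lies. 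The algebraic degeneration picture (S-equivalence / Jordan--H\"older, parabolic bundles) supplies the needed local models near the singular strata.
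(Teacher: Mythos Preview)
Your outline for the first part is correct and matches the paper: the holomorphicity equation lets you trade normal derivatives for tangential ones, so $u(\tilde\sigma_t')|_{N_c}$ rewrites as an operator $\tilde u_c(\tilde\sigma_t')$ intrinsic to $N_c$, and the evolution closes up.

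However, there is a genuine conceptual error in your treatment of the limit and the kernel. You describe $F_{\tilde P,\bar\sigma}\cap\bar F_{\tilde P,\bar\sigma}$ as ``the `complex' directions transverse to the isotropic leaves of $\pi_c$''. This is backwards: for a non-negative polarization, $F\cap\bar F$ is the \emph{real} part of the polarization, and here it is exactly the tangent space \emph{to} the fibers of $\pi_c$, not transverse to them. The paper makes this explicit by choosing a symplectic frame $(p,q)$ with $\mathrm{Span}\,p' = F_{\tilde P,\bar\sigma}\cap\bar F_{\tilde P,\bar\sigma}\cap TN'$ and writing the holomorphic tangent space as $\mathrm{Span}(p + Z_t q)$ with $Z_t\to 0$. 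The limiting symbol $G_\infty$ then lands in $S^2(F_{\tilde P,\bar\sigma}\cap\bar F_{\tilde P,\bar\sigma})$, i.e.\ it is a bivector \emph{along the isotropic leaves}.

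This misidentification feeds into a second gap: you assert that $s\in\ker\tilde u_{c,\infty}$ iff ``its normal derivative in those directions vanishes'', but $\tilde u_{c,\infty}$ is second order, so vanishing of a first-order condition does not follow for free. The paper's argument is that the Ricci-potential terms vanish in the limit (specifically $F_t\to 0$, $\dot F_t\to 0$, and $dF_t\to 0$ along the real directions), so $\tilde u_{c,\infty}=\Delta_{G_\infty}$; and then $G_\infty$ defines a Hermitian structure on the leaves of $F\cap\bar F$, making $\Delta_{G_\infty}$ the leafwise Laplace--Beltrami operator for the restricted connection. It is this ellipticity/positivity along the (compact) leaves that forces $\ker\Delta_{G_\infty}$ to consist of covariant constant sections. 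You need this step.

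Finally, your claim that ``$G(\tilde\sigma_t')$ \ldots\ converges'' hides a nontrivial cancellation. In the frame above one finds $Z_t = Z_\infty t^{-1} + R(t)$, so $\dot Z_t\to 0$ but $Z_t^{-1}$ and $Y_t^{-1}$ blow up like $t$; the point is that the combination $\bar Z^{-1}\dot{\bar Z}\,\bar Z^{-1}$ appearing in the rewritten $\Delta_G$ has the finite limit $-\bar Z_\infty^{-1}$, yielding $\lim_{t\to\infty}\Delta_G s = 2i\sum_{i,j}(\bar Z^\infty)^{-1}_{ij}\nabla_{p_i}\nabla_{p_j}s$. Without tracking these rates you cannot conclude that the limit exists and is nonzero. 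The obstacles you anticipate (singular fibers, parabolic local models) are not where the work is; the proof is a local symplectic-frame computation on $N_c'$.
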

We will use the following notation
$$\tilde{u}_{c,t} = \tilde{u}_c(\tilde \sigma_t').$$
\begin{proof}
  For $X$ a smooth section of $TN|_{N_c'}$, we have that
  \begin{align*}
		\nabla_X = \nabla_{X'} - i\nabla_{I(X'')},
  \end{align*}
  and if $Y$ is a further smooth section of $TN|_{N_c'}$, then
  \begin{align*}
		\nabla_X \nabla_Y =&\, \nabla_{X'}\nabla_{Y'-iI_t(Y'')}-i\nabla_{Y'}\nabla_{I_t(X'')}+i\nabla_{I_t(Y'')}\nabla_{I_t(X'')} \\
		 &\, + \nabla_{[X'',Y']-iI_t([X'',I_t(Y'')]'')} \\
		 &\, - \nabla_{[X'',I_t(Y'')]'-iI_t([X'',I_t(Y'')]'')} \\
		 &\, -k(i\omega(X'',Y') - \omega(X'',I_t(Y''))).
  \end{align*}
  From these formulae we immediate get the first part of the proposition, since we can use the above two formulae to rewrite $u(\tilde \sigma')|_{N_c}$ to obtain an operator $\tilde u_c(\tilde \sigma_t')$, such that the evolution of $s_t|_{N_c}$ is determined by \eqref{eq10}.
  
  Let us now use the notation $G_t = G(\tilde \sigma_t')$.
  
  \begin{claim}\label{C1}
    There exists a unique section $G_\infty \in C^\infty(N'_c,S^2(F_{\tilde P, \bar \sigma} \cap \bar F_{\tilde P, \bar \sigma}))$ such that
    \begin{align*}
		  \lim_{t \to \infty} G_t = G_\infty.
    \end{align*}
  \end{claim}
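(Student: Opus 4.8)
The plan is to analyze the asymptotic behavior of $G_t = G(\tilde\sigma_t')$ as $t \to \infty$ directly from the construction in Section~\ref{sect2}, using the explicit description of the degenerating family $\tilde\sigma_t$ from Section~\ref{sect3}. Recall that $\tilde G(V) = -V[g^{-1}]$, so $G_t$ is essentially the $(2,0)$-part of the derivative of the inverse K\"ahler metric along the path $\tilde\sigma_t$. The starting observation is that inserting a flat cylinder of length $t$ along each curve in $\tilde P$ deforms the complex structure $I_t$ on $N$ in a way that, near the relevant coisotropic locus $N_c$, looks like the standard degeneration of a Gaussian: the metric $g_t$ degenerates along the directions transverse to $F_{\tilde P,\bar\sigma}$ and becomes asymptotically a product. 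First I would use the model computation from \cite{A2}, \cite{A3} (the analogous one-cylinder or local model, which reduces to the case of a family of linear complex structures on a cylinder or annulus as in Remark~\ref{rem2} and \cite{AB}), to write $I_t$ in suitable local coordinates adapted to the splitting \eqref{eq9}, with the transverse directions controlled by a parameter like $e^{-t}$.

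The key steps, in order: (1) By Theorem~\ref{thm10}, $I_t \to F_{\tilde P,\bar\sigma}$, so $\tilde G(\tilde\sigma_t')$ --- which measures the infinitesimal variation of $I_t$ --- must have its ``singular'' part concentrated in the directions that are becoming polarized. Since $\tilde G(V)\omega = V[I]$ and $V[I]$ anti-commutes with $I$, the tensor $\tilde G_t$ naturally decomposes with respect to the splitting into $F_{\tilde P,\bar\sigma}$, $\bar F_{\tilde P,\bar\sigma}$ and their ``intersection'' directions; I would show that the components of $G_t$ that pair with anything outside $F_{\tilde P,\bar\sigma}\cap \bar F_{\tilde P,\bar\sigma}$ decay exponentially in $t$, while the remaining component converges. (2) For the convergent component: the family $\tilde\sigma_t$ is, for large $t$, essentially a fixed complex structure on $\bar\Sigma$ with longer and longer cylinders, and the $\tilde\sigma_t'$-derivative along this family stabilizes — the ``rate of change'' of the complex structure is supported on the cylinders and in the limit gives a well-defined tensor $G_\infty$ on $N'_c$. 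Concretely, writing the cylinder coordinates and differentiating the explicit formula for $I_t$ in $t$, one gets an expression whose limit exists and is supported on $S^2(F_{\tilde P,\bar\sigma}\cap\bar F_{\tilde P,\bar\sigma})$; rigidity (Definition~\ref{Ridig}) ensures $G_t$ is holomorphic, which passes to the limit. (3) Uniqueness of $G_\infty$ is automatic once we know the limit exists pointwise on the dense smooth set $N'_c$.

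The main obstacle I expect is controlling the limit \emph{uniformly} and \emph{smoothly} on $N'_c$, i.e. upgrading a naive pointwise/formal computation in the local cylinder model to a genuine $C^\infty$ statement about a global tensor field on the (possibly singular) moduli space $N_c$. This requires knowing that the local model for the degeneration of $I_t$ — which on the gauge-theory side comes from the explicit description of $N_c$ via broken gauge transformations and the maps $\tilde\Phi_A : Z_c \to \pi_c^{-1}([\bar A])$ from Section~\ref{sect3} — glues correctly and that the estimates on the transverse decay ($O(e^{-t})$ or similar) hold with all derivatives, locally uniformly. I would handle this by pulling everything back via $\tilde\Phi_A$ to the smooth space $Z_c$, where the connection and the family $I_t$ have explicit descriptions, carrying out the estimate there, and then checking $Z_{\bar A}$-equivariance so the result descends. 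The identification of $\Ker(\tilde u_{c,\infty})$ with sections covariant constant along $F_{\tilde P,\bar\sigma}\cap\bar F_{\tilde P,\bar\sigma}$ then follows by inspecting the leading-order operator built from $G_\infty$ together with the surviving connection terms, exactly as in the rewriting of $u(\tilde\sigma')|_{N_c}$ performed at the start of the proof.
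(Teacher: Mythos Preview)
Your outline follows the same broad strategy as the paper---use the known asymptotics of $I_t$ from Theorem~\ref{thm10} to analyze $G_t$ in a frame adapted to the limiting polarization---but it misses the computational heart of the argument, and your guessed decay rate would actually make the claim fail.

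The paper's proof is an explicit matrix calculation. One chooses a local symplectic frame $(p,q)$ with $\mathrm{Span}\,p = O$ containing $F_{\tilde P,\bar\sigma}\cap\bar F_{\tilde P,\bar\sigma}$, and writes the holomorphic tangent space $P_t$ as the span of $w = p + Z_t q$ for a complex symmetric matrix $Z_t = X_t + iY_t$. A direct computation then yields
\[
G_t \;=\; -\tfrac{i}{2}\sum Y_{ik}^{-1}\,\dot{\bar Z}_{kl}\,Y_{lj}^{-1}\, w_i\otimes w_j,
\]
so the existence of $\lim_{t\to\infty} G_t$ comes down to the asymptotics of $Y_t^{-1}\dot{\bar Z}_t Y_t^{-1}$ together with $w_i \to p_i$. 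The crucial input from \cite{A3} is that (in the fully real case, the others being analogous) $Z_t = Z_\infty\, t^{-1} + R(t)$ with $tR(t)\to 0$ and $t^2 R'(t)\to 0$. Then $Y_t^{-1}\sim t\,Y_\infty^{-1}$ while $\dot{\bar Z}_t\sim -\bar Z_\infty\, t^{-2}$, and these balance \emph{exactly}: $Y_t^{-1}\dot{\bar Z}_t Y_t^{-1}\to -Y_\infty^{-1}\bar Z_\infty Y_\infty^{-1}$, a finite nonzero limit. Since $w_i\to p_i\in F_{\tilde P,\bar\sigma}\cap\bar F_{\tilde P,\bar\sigma}$, this gives $G_\infty$ in $S^2(F_{\tilde P,\bar\sigma}\cap\bar F_{\tilde P,\bar\sigma})$.

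Your proposal does not isolate this cancellation, and your guess that the transverse directions decay like $e^{-t}$ would break it: if $Z_t\sim e^{-t}Z_\infty$ then $Y_t^{-1}\dot{\bar Z}_t Y_t^{-1}\sim e^{t}$, which diverges. The $1/t$ rate is not incidental---it is precisely what makes $G_t$ converge under the specific parametrization by cylinder length. So the local model you invoke must be worked out carefully enough to see this rate, and the abstract splitting argument (``components outside $F\cap\bar F$ decay, the rest stabilizes'') is not sufficient without it. The appeal to rigidity and the pullback via $\tilde\Phi_A$ are not used here; the paper works directly on $N_c'$ with the symplectic frame.
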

  In order to establish the claim, we consider a point $x_0 \in N_c'$ and a local symplectic frame $(w,v)$ of $TN'$ around $x_0$ with the following properties: The bundles $O = \mathrm{Span} \ p$ and $Q = \mathrm{Span} \ q$ are complementary Lagrangian subbundles of $TN'$ and further that $p = (p',p'')$, such that
  \begin{align*}
		\mathrm{Span} \ p' = F_{\tilde P, \bar \sigma} \cap \bar F_{\tilde P, \bar \sigma} \cap TN'.
  \end{align*}
  We now observe that there is a unique complex symmetric matrix $Z_t(x)$ depending smoothly on $x$ near $x_0$, such that
  \begin{align*}
		w^{(t)}(x) = p(x) + Z_t(x) q(x)
  \end{align*}
  spans $P_t(x)$, the fiber of the holomorphic tangent bundle of $N'$ at $x$ with respect to the complex structure induced from $\tilde \sigma_t$. If we write $Z_t = X_t + iY_t$, where $X_t$ and $Y_t$ are real, then from its definition we conclude that $X_t$ and $Y_t$ are symmetric and $Y_t > 0$. The decomposition $p = (p',p'')$ gives a corresponding decomposition of $q = (q',q'')$. This decomposition gives the following block-decomposition of $Z_t$:
  \begin{align*}
		Z_t = \begin{pmatrix} Z_t^{(11)} & Z_t^{(12)} \\ Z_t^{(21)} & Z_t^{(22)} \end{pmatrix}.
  \end{align*}
  By Theorem~\ref{thm10}, we have the following asymptotics:
  \begin{align*}
		Z_t \to \begin{pmatrix} 0 & 0 \\ 0 & Z_\infty \end{pmatrix}
  \end{align*}
  as $t$ goes to infinity, where $Z_\infty = X_\infty + iY_\infty$ and $Y_\infty > 0$. By examining the proofs of Theorem~10 in \cite{A3}, one sees immediately that the convergence of $P_t$ to $F_{\tilde P , \bar \sigma}$ is a convergence in the $C^\infty$-topology on $N'$. In particular, we have that
  \begin{align*}
		Z_t = Z_\infty + Z_\infty' t^{-1} + R(t).
  \end{align*}
  
 Let us now analyse the case where 
   \begin{align*}
		O = F_{\tilde P, \bar \sigma}  = F_{\tilde P, \bar \sigma} \cap \bar F_{\tilde P, \bar \sigma} \cap TN'.
  \end{align*}
  The other cases are treated completely analogously.
  
Let $L_t$ be a symplectic local bundle 
transformations of $TN'\otimes {\bC}$ such that  $L_t(O)=P_t$ and $L_\infty = \Id$.   
In this basis we have:
\[L_t = \begin{pmatrix}A & B \\ 
C & D\end{pmatrix} \to \begin{pmatrix}\Id & 0 \\
0 & \Id \end{pmatrix}
\]
as $t\to \infty$.
Since $A \to \Id$ as $t\to \infty$, we may assume that $A$ is invertible.   
The symplectic transform
\[
\begin{pmatrix}
	A^{-1} & 0 \\ 
	-C^t & A^t
\end{pmatrix}
\]
preserves $O$ so we consider
\[
\begin{pmatrix}
	A^{-1} & 0 \\
	-C^t & A^t
\end{pmatrix}
\begin{pmatrix}
	A & B \\ 
	C & D
\end{pmatrix}
= 
\begin{pmatrix}
	\Id & A^{-1}B \\
	A^tC - C^tA & A^tD-C^tB
\end{pmatrix}
= 
\begin{pmatrix}
	\Id & A^{-1}B \\
	0 & \Id
\end{pmatrix}
\]
which must map $O$ onto $P_t$. Hence, $Z=A^{-1}B$ and 
\[
	w_i = p_i \sum Z_{ij} q_j = p_i + \sum X_{ij}q_j + i\sum Y_{ij}q_j
\]
and
\[
	\bar w_i = p_i \sum \bar Z_{ij} q_j = p_i + \sum X_{ij}q_j - i\sum y_{ij}q_j
\]
is a basis of $\bar P_t$ (we have here suppressed the $t$-dependence of the $w_i$'s).
Since $P_t \cap \bar P_t = \{0\}$ we that $O\cap P_t=\{0\}
	$. This follows since $P_t$ corresponds to  $I_t$  and $I_t(O)\cap O = \{0\}$ since $O$ is Lagrangian.  
	
	\begin{claim}
	$P\cap	P_t=\{0\} \Leftrightarrow \det(Z)\neq 0$.    
	
\end{claim}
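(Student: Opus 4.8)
The plan is to deduce the claim from elementary linear algebra in the single fibre $V=T_{x_0}N'\otimes\bC$, with $t$ and $x_0$ now fixed. Here $P$ denotes $O_{\bC}=\mathrm{Span}(p_1,\dots)$, the complexification of the Lagrangian $O$ (equivalently $P_\infty$, since in the case under analysis $O=F_{\tilde P,\bar\sigma}\cap TN'$, so $Z_\infty=0$ and $P_\infty=O_{\bC}$); $Q_{\bC}=\mathrm{Span}(q_1,\dots)$ is the complementary Lagrangian, $(p,q)$ is a symplectic basis (so $\omega(p_i,q_j)=\delta_{ij}$ and $\omega(p_i,p_j)=\omega(q_i,q_j)=0$), and $P_t=\mathrm{Span}(w_1,\dots)$ with $w_i=p_i+\sum_j Z_{ij}q_j$, where $Z=Z_t$ is the symmetric matrix already introduced — symmetry of $Z$ being exactly the condition $\omega(w_i,w_j)=Z_{ji}-Z_{ij}=0$ that makes $P_t$ Lagrangian.

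First I would write an arbitrary element of $P_t$ as $v=\sum_i a_i w_i$ and expand it in the basis $(p,q)$:
\[
v=\sum_i a_i p_i+\sum_j\Big(\sum_i a_i Z_{ij}\Big)q_j=\sum_i a_i p_i+\sum_j (Za)_j\, q_j ,
\]
the last equality using $Z_{ij}=Z_{ji}$. Since $(p_1,\dots,q_1,\dots)$ is a basis of $V$, such a $v$ lies in $P=O_{\bC}$ precisely when all $q_j$-coefficients vanish, i.e. when $Za=0$. Hence $a\mapsto\sum_i a_i p_i$ restricts to an isomorphism $\ker Z\cong P\cap P_t$ (injective because the $p_i$ are linearly independent), so that $P\cap P_t=\{0\}$ if and only if $\ker Z=\{0\}$, which for the square matrix $Z$ is $\det Z\neq 0$. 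This is the asserted equivalence.

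Finally I would combine this with the fact recorded just above the claim, that $O\cap P_t=\{0\}$: if $0\neq v\in I_t(O)\cap O$, write $v=I_t u$ with $u\in O$, so $I_t v=-u\in O$ and $\omega(v,I_tv)=0$, contradicting $\omega(v,I_tv)=g_t(v,v)>0$; thus $I_t(O)\cap O=\{0\}$, and writing $v=x+iy$ with $x,y\in O$ the same computation gives $O_{\bC}\cap P_t=\{0\}$. Together with the equivalence this yields $\det Z_t\neq 0$ for every finite $t$, which is what is used afterwards. I expect no real obstacle; the only points deserving care are reading $P$ as $\mathrm{Span}(p)$ rather than $\mathrm{Span}(q)$ — the latter meets $P_t$ in $\{0\}$ for every $Z$ and so carries no information about $\det Z$ — and, as a boundary consistency check, noting that here $P_\infty=F_{\tilde P,\bar\sigma}\cap TN'_{\bC}=O_{\bC}$, so $Z_\infty=0$ and both sides of the equivalence correctly fail at $t=\infty$.
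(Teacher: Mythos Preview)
Your proof is correct and follows essentially the same route as the paper: write $\sum_i a_i w_i$ in the $(p,q)$ basis and observe that membership in $\mathrm{Span}(p)$ is equivalent to $Za=0$. Your reading of $P$ as $O_{\bC}=\mathrm{Span}(p)$ is the intended one, and your third paragraph (deducing $\det Z_t\neq 0$ for finite $t$ from positivity of $g_t$) spells out cleanly what the paper records just before the claim as ``$I_t(O)\cap O=\{0\}$ since $O$ is Lagrangian.''
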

\proof
	Assume 
	$\det{Z}\neq 0$. Then there exists a non zero vector $c$ such that 
	$$\sum_i c_iZ_{ij}=0, \qquad j=1, \dots, n.$$  Hence $\sum 
	c_iw_i = \sum c_i p_i$ thus  $O \cap P_t \neq \{0\}$.  
	Conversely, if 
	$O\cap P_t\neq \{0\}$, let $c$ be such that 
	$$\sum c_iw_i \in O\cap P_t-\{0\}.$$   But then 
	$$\sum c_iw_i = \sum c_ip_i + \sum_j (\sum_i c_iZ_{ij})q_j \in P\cap P_t-\{0\}$$
	which implies that $\sum_i c_iZ_{ij} = 0$ for  $j=1, \dots, n$., thus  $\det Z = 0$.  
	\eproof

\begin{claim}
	$P_t\cap \bar{P_t}=\{0\}$ if and only if $\det{Y}\neq 0$.
\end{claim}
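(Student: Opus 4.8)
The plan is to mimic, essentially verbatim, the linear-algebra argument of the previous claim, now using the explicit bases $w_i = p_i + \sum_j Z_{ij} q_j$ of $P_t$ and $\bar w_i = p_i + \sum_j \bar Z_{ij} q_j$ of $\bar P_t$, together with the fact that $(p,q)$ is a frame so that the coordinates of any vector in this frame are uniquely determined. First I would note that $w_1,\dots,w_n$ is a basis of the $n$-dimensional space $P_t$ and likewise $\bar w_1,\dots,\bar w_n$ of $\bar P_t$, so any $v \in P_t \cap \bar P_t$ can be written as $v = \sum_i c_i w_i = \sum_i d_i \bar w_i$ for complex coefficients $c_i,d_i$, with $v \neq 0$ if and only if $c \neq 0$.

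The key step is then to expand both expressions in the $(p,q)$-frame,
\begin{align*}
 \sum_i c_i p_i + \sum_j \Bigl( \sum_i c_i Z_{ij} \Bigr) q_j = \sum_i d_i p_i + \sum_j \Bigl( \sum_i d_i \bar Z_{ij} \Bigr) q_j,
\end{align*}
and match coefficients: the $p$-components give $c = d$, and the $q$-components then give $\sum_i c_i (Z_{ij} - \bar Z_{ij}) = 0$ for all $j$. Since $Z - \bar Z = 2iY$ with $Y$ real and symmetric, this is precisely $Yc = 0$. Hence a nonzero $v \in P_t \cap \bar P_t$ produces a nonzero vector $c \in \ker Y$, so $\det Y = 0$. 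Conversely, if $\det Y = 0$ choose a nonzero real $c$ with $Yc = 0$; then the difference $\sum_i c_i w_i - \sum_i c_i \bar w_i = 2i \sum_j (Yc)_j q_j$ vanishes, so $\sum_i c_i w_i = \sum_i c_i \bar w_i \in P_t \cap \bar P_t$, and it is nonzero since its $p$-component $\sum_i c_i p_i$ is nonzero. This gives the equivalence $P_t \cap \bar P_t = \{0\} \iff \det Y \neq 0$, which is the claim.

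I do not expect any real obstacle here, since the statement is pure finite-dimensional linear algebra. The only point requiring a line of care is that the coefficients $c_i$ are a priori complex: in the converse direction one should observe that $\det Y = 0$ already yields a real solution of $Yc = 0$ (equivalently, the real and imaginary parts of any complex solution both lie in $\ker Y$), which is harmless. It is also worth remarking in passing that this is consistent with $Y_t > 0$ established earlier, which is exactly what guarantees $P_t \cap \bar P_t = \{0\}$ for all finite $t$.
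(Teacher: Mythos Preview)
Your proof is correct and follows essentially the same approach as the paper: express elements of $P_t$ and $\bar P_t$ in the bases $w_i$ and $\bar w_i$, expand in the symplectic frame $(p,q)$, and match coefficients to reduce the question to the (non)triviality of $\ker Y$. Your version is slightly more streamlined than the paper's, which in one direction restricts to a real vector in $P_t \cap \bar P_t \cap TM$ and separates real and imaginary parts, whereas you work directly over $\bC$ and observe at the end that a real $Y$ with nontrivial complex kernel already has $\det Y = 0$.
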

\begin{proof}
	Assume $\det{Y}=0$ then there exist $(x_1,\ldots,x_n)\in \bR^n
        -\{0\}$ s.t.
	$\sum x_i Y_{ij}=0$. Now, $\sum x_i w_i= \sum x_i \bar w_i \neq 0$ and hence $P_t\cap \bar P_t \neq \{0\}$.
	
	Conversely, assume $P_t\cap \bar P_t \neq \{0\}$. Let $(c_1,\ldots,c_n)\in \bC^n - \{0\},$ such that 
	$\sum c_i w_i \in P_t \cap \bar P_t \cap TM$. But then 
	
	$$\sum(c_iw_i-\bar c_i \bar w_i)=0,$$
	if and only if
	$$\sum(c_i'w_i'-c_i''w_i''+i(c_i'w_i''+c_i''w_i')-c_i'w_i'+c_i''w_i''+i(c_i'w_i''+c_i''w_i'))=0$$
	which is equivalent to 
	$$ 2\sum \left( c_i'Y_{ij}q_j + c_i''p_i + c_i''X_{ij}q_j \right) = 0,$$
	happening if and only if
	$$ c_i'' = 0 \text{ and } c_i'=0$$
	 which is the case if and only if $\det{Y}=0.$
\end{proof}

Notice that $TM/O \simeq Q$ and so
	$$ \dot{L}_{\infty} \in C^{\infty}(O,TM/O)$$
	is represented by $\dot{Z}_{\infty}$ in the basis $(p_1,\ldots,p_n)$ of $O$ and $(q_1,\ldots,q_n)$ of $Q\simeq TM/O$.
	By Proposition 2.3 p.118 in \cite{GS}, we can identify the space of lagrangian subspaces transverse to a given one $O$ as an affine space associated to the vector space $S^2(TM/P)$, which we can identify with $S^2(Q)$.
The quadratic form associated with $P_t$ becomes 
	$$ H_t(q_1,y_z) = (\pi_t y_1,y_z), $$
	where $\pi_t$ is the projection from $TN'\otimes \bC$ onto $P_t$ along $O$.
	Now $w_i = \pi_t (w_i) = \sum Z_{ij}^{-1}w_j$, and so 
	$$ H_t(q_i,q_j)=Z_{ij}^{-1}.$$
	Now $I_t$ is determined from $P_t$ by the condition that 
	$$ P_t = E(I_t,i), $$
	and 
	$$ \bar P_t = E(I_t,-i).$$
	Hence
	$$ I(p_i)+\sum X_{ij}I(q_j)+i\sum Y_{ij}I(q_j) = i p_i + i\sum X_{ij}q_j - \sum Y_{ij}q_j $$
	and 
	$$ I(p_i) + \sum X_{ij}I(q_j) - i\sum Y_{ij} I(q_j) = -iq_i - i\sum X_{ij} q_j - \sum Y_{ij}q_j,$$
	which implies 
	$$ I(q_j) = \sum_{k} Y_{jk}^{-1}\left( p_k + \sum X_{ki}q_i \right) = \sum_k Y_{jk}^{-1}p_k + \sum_{k,i} Y_{jk}^{-1}X_{ki}q_i,$$
	and
\begin{align*}	
	 I(p_i) 	&= - \sum Y_{ij}q_j - \sum X_{ij} I(q_j) \\
	 			&= - \sum Y_{ij}q_j - \sum X_{ij}Y_{jk}^{-1}p_k - \sum X_{ij}Y_{jk}^{-1}X_{kl}q_l \\
				&= - \sum X_{ij}Y_{jk}^{-1}p_k - \sum \left( Y_il + \sum X_{ij}Y_{jk}^{-1}X_{kl} \right)q
\end{align*}
This gives us the following matrix presentation
\[
	\begin{pmatrix}
		I(p) \\
		I(q)
	\end{pmatrix}
= 
	\begin{pmatrix}
		-XY^{-1}	& - (Y+XY^{-1}X) \\
		Y^{-1}		& Y^{-1X}
	\end{pmatrix}
	\begin{pmatrix}
		p \\
		q
	\end{pmatrix}.
\]

A simple computational check shows that this this matrix indeed squares to $-\Id$. Let us now compute the derivative of $I_t$.
\[
	\begin{pmatrix}
	\dot I (p) \\
	\dot I (q) 
	\end{pmatrix}
=
	\begin{pmatrix}
	-\dot{X} Y^{-1} - X\dot{Y}^{-1}		& -(\dot{Y}-\dot{(XY^{-1}X)}) \\
	\dot{Y}^{-1} 						& \dot{Y}^{-1}X+Y^{-1}\dot{X}
	\end{pmatrix}
	\begin{pmatrix}
	p \\
	q
	\end{pmatrix}
\]
Using that $YY^{-1}=\id$ we find that
\[
 \dot{Y}^{-1}=Y^{-1}\dot{Y}Y^{-1}.
\]
		
On the other hand, using
\[
	I(w_i)=iw_i
\]
we compute that
\[
	\dot I(w_i) = i \dot w_i - I(\dot w_i) = (i\Id-I)\dot w_i
\]
and that
\[
	\dot w_i = \dot X_{ij}q_j +i \dot Y_{ij} q_j = (\dot X_{ij} +i \dot Y_{ij})q_j = (\dot X + i \dot Y) Y^{-1} Y q_j
\]
which gives 
\begin{align*}
	I(\dot w_i) 	&= (\dot X_{ij}+i\dot Y_{ij})I(q_j) \\
					&= \dot X_{ij}Y_{jk}^{-1} p_k + i \dot Y_{ij}Y_{jk}^{-1}p_k \\
					&= \dot X_{ij}Y_{jk}^{-1} X_{kl} q_l + i \dot Y_{ij}Y_{jk}^{-1}X_{kl}q_l .
\end{align*}
This gives us the following formular
\begin{align*}
	\dot I(w) 	&= -(\dot X + i\dot Y) Y^{-1} p +(\dot X + i \dot Y)Y^{-1}(iY-X)q \\
					&= -(\dot X + i\dot Y) Y^{-1}(p+Xq-Yq)\\
					&= -(\dot X+i \dot Y) Y^{-1} \bar w	 \\
					&= -\dot Z Y^{-1}\bar w.
\end{align*}
But $I(w_i)=i w_i$, so $\dot I(w) = i \dot w -I(\dot w).$ Now
\[
	\dot w = \dot Z q = \dot ZY^{-1}Y q, 
\]
so we conclude
\[
	I(\dot w) = \dot Z Y^{-1}(p+Xq).
\]
Which implies that
\[
	\dot I(w) = -\dot Z Y^{-1} (p+Xq-iYq) = -\dot Z Y^{-1} \bar{w}.
\]
Hence, with respect to the local frames we have the local matrix presentations
\[
	\dot I = -\dot Z Y^{-1} \in C^{\infty}(P_t^* \otimes \bar P_t) \simeq C^{\infty}(\Hom(P_t,\bar P_t)) 
\]
and 
\[
	\dot I = -\dot{\bar Z} Y^{-1} \in C^{\infty}(\bar P^*_t \otimes P_t) \simeq C^{\infty}(\Hom(\bar P_t,P_t)).
\]
So we have the following formula for the derivative of the complex structure
\[
	\dot I = -\sum_{i,j,k} \dot{\bar Z}_{jk}Y^{-1}_{ki}w_i \otimes \bar{w_j}^* = \sum_{i,j} a_{ij}w_i \otimes \bar{w}_j^*.
\]
So if $w=\sum \omega_{ij} w_i^* \wedge \bar{w}_j^*$ then
\[
	a_{ij}=\sum G_{ik}w_{kj}
\]
so
\[
	\sum_k a_{ik} w^{-1}_{kj}=-\sum_{l,k}Y^{-1}_{ik}\dot{\bar Z}_{kl} \omega_{lj}^{-1}.
\]
Define
\[
	Z^{-1}=V+iW.
\]
Then
\[
	XV-YW=\id \quad YV+XW=0
\]
and hence
\[
	V=-Y^{-1}XW=-WXY^{-1}.
\]
\[
	XY^{-1}XW+YW=(XY^{-1}X+Y)W=\id.
\]
Now 
\[
	(XY^{-1}X+Y)v=0 
\]
will imply that $Yv=-XY^{-1}Xv $ which gives
\[0\leq (Yv,v)=-(XY^{-1}Xv,v)=-(Y^{-1}Xv,Xv)\leq 0.
\]
Thus $(Yv,v)=0$ and therefore $v=0$.
\[
	W=-(XY^{-1}X+Y)^{-1}.
\]
\[
	V=Y^{-1}X(XY^{-1}X+Y)^{-1}.
\]

Let $(p_i^*,q_i^*)$ be a basis of $T^*N'$ dual to the basis $(p_i,q_i)$ of $TN'$. So $\omega=\sum p_i^*\wedge q_i^*$. Let $(w_i^*,\bar w_i^*)$ be a basis of $T^*N'$ dual to the basis $(w_i,\bar w_i)$ of TN'. Then
\[
	O^*= span\{ p_1^*,\ldots,p_n^* \}
\]
and
\[
	\bar P_t^*= span\{ \bar w_1^*,\ldots, \bar w_n^* \}.
\]
A short computation gives that
\[	\begin{pmatrix}
		w^* \\
		\bar w^*
	\end{pmatrix}
	=	\frac{i}{2}
	\begin{pmatrix}
		Y^{-1}	& 0 \\
		0 		& Y^{-1}
	\end{pmatrix}
	\begin{pmatrix}
		\bar Z	& -\id \\
		-Z 		& \id
	\end{pmatrix}
	\begin{pmatrix}
	p^* \\
	q^*
	\end{pmatrix}.
\]
Let us now compute the symplectic form on the $(w^*,\bar w^*)$ basis.
\begin{align*}
	\omega 	&= \sum_{i=1}^n p_i^* \wedge q_i^* \\
			&= \sum_{\substack{i=1\\ j=1}}^n (w_i^* \bar w_i^* ) \wedge (Z_{ij}w_j^* + \bar Z_{ij} \bar w_j^*)\\
			&= \sum_{\substack{i=1\\ j=1}}^n w_i^* \wedge Z_{ij} w_j^* + \sum_{\substack{i=1\\ j=1}}^n \bar w_i^* \wedge  \bar Z_{ij} \bar w_j^*+ \sum_{\substack{i=1\\ j=1}}^n w_i^* \wedge  \bar Z_{ij} \bar w_j^* + \sum_{\substack{i=1\\ j=1}}^n \bar w_i^* \wedge Z_{ij} w_j^* \\
			&= \sum_{i<j}^n (Z_{ij}(w_i^* \wedge w_j^*+w_j^* \wedge w_i^*)+ \bar Z_{ij}(\bar w_i^* \wedge \bar w_j^*+\bar w_j^* \wedge \bar w_i^*)) -2i \sum_{i,j} w_i^* \wedge Y_{ij}\bar w_j^* \\
			&= -2i \sum_{i,j=1} w_i^* \wedge Y_{ij} \bar w_j^* = -2i w^* \wedge Y \bar w^* \\
			&= -2i \sum_{i,j} Y_{ij} w_i^* \wedge \bar w_j^*,
\end{align*}
hence $\omega_{ij}= -2i Y_{ij}.$
From this we see that 
\[
	G= \sum_{i,j} G_{ij} w_i \otimes w_j = -\frac{i}{2} \sum_{i,j,k,l} Y^{-1}_{ik} \dot{\bar Z} Y^{-1}_{lj} w_i \otimes w_j
\]
Let $\pi_t: T^*M \to P_t^*$ be the projection onto $P_t^*$, whose kernel is $\bar P_t^*$, i.e. compatible with $T^*M=P_t^* \oplus \bar P_t^*$, and let 
$\pi'_t: T^*M\to O^*$ be the projection onto $O^*$, whose kernel is $\bar P^*_t$, i.e. compatible with $T^*M=O^*\oplus \bar P^*_t$.
Since $\Im(1-\pi_t)=\Ker \pi_t  = \Ker \pi_t' =\Im(1-\pi_t')$
we see that 
\[
	\pi_t \circ \pi_t' = \pi_t(\pi_t'+(1-\pi_t'))=\pi_t
\]
and 
\[
	\pi_t'\circ\pi_t = \pi_t'(\pi_t+(1-\pi_t))= \pi_t'.
\]
Let us now compute $\pi_t$ and $\pi_t'$ in the respective bases. We have that
\[
	-2i Y(w^*+\bar w^*)=(\bar Z -Z)p^* = -2i Y p^*
\]
which implies 
\begin{equation}\label{star}
	p^* = w^*+\bar w^*
\end{equation}
and further that
\[
	-2i Y (w^* - \bar w^*)=-2 q^* +2Xp^*.
\]
This implies
\[
	q^*= i Y (w^* - \bar w^*)+X(w^* + \bar w^*) = Zw^*+\bar Z \bar w^*.
\]
So 
\[
	\pi_t(p^*)=w^* =\frac{i}{2}Y^{-1}(-q^*+\bar Z p*), \quad \pi_t(q^*)=Z w^* = \frac{i}{2}ZY^{-1}(-q^* + \bar Z p^*).
\]
Because of \eqref{star} we see that
\[
	\pi_t'(w^*)=p^*.
\]
and 
\[
	\pi_t'(\bar w^*) =0
\]
so
\[
	\pi_t'(q^*)=\pi_t'(Zw^*)+\pi_t'(\bar Z \bar w^*) = Zp^*.
\]
Let us define the following operators
$$D' = \pi'_t \circ \nabla^{1,0} : C^\infty(\L^k) \ra C^\infty(O^*\otimes \L^k)$$
$$ G' = \pi'_t \circ G \circ \pi_t : C^\infty(O^*\otimes \L^k) \ra C^\infty(O\otimes \L^k)$$
$$ D'' = \pi'_t \circ (\nabla^{1,0}\otimes \Id \oplus \Id \otimes \nabla^{1,0})  \circ \pi_t : C^\infty(O\otimes \L^k) \ra C^\infty(O^*\otimes O\otimes \L^k).$$
On $\ker \nabla^{0,1}$, we shall now compute $\tr(D''GD')$. Hence if we have a section $s$ of $\L^k$ over $N'$, which is holomorphic, we have that
\[
	 \nabla_{p_i}s=-\nabla_{\bar Z_{ij} q_j}s,
\]
which we will use a number of times below. From the above we have that
\[
	G(w_i^*)=\frac{i}{2}\sum Y_{ik}^{-1}\dot{ \bar Z}_{kl} Y^{-1}_{lj}w_j
\]
Now
\[
	w_i+\bar w_i = 2 \left(p_i \sum X_{ij}q_j \right)
\]
and
\[
	w_i-\bar w_i = 2 i \sum Y_{ij}q_j
\]
so 
\[
	q_i=-\frac{i}{2}\sum Y_{ij}^{-1} (w_j-\bar w_j)
\]
and therefore we have that
\[
	w_i+\bar w_i = 2p_i - i\sum X_{ij}Y^{-1}_{jk}(w_k-\bar w_k)
\]
which implies 
\[
	\sum_{k}\left( \delta_{ik}+ i\sum X_{ij}Y^{-1}_{jk} \right)w_k = 2 p_i + \sum_{k} \left( i\sum X_{ij}Y^{-1}_{jk} - \delta_{ik}\right).
\]
Now
\[
	i \bar Z Y^{-1}w= 2p + iZY^{-1} \bar {w}
\] which gives
\[
 w=-2iY\bar Z^{-1}p+Y\bar Z^{-1}ZY^{-1}\bar w
\]
hence
\[
	\pi_t'(w)=-2i Y\bar Z^{-1}p, \quad \pi_t(p)= \frac{i}{2}\bar Z Y^{-1} w.
\]
So
\[
	\nabla^{1,0}s = \sum_{i=1}^n w_i^* \otimes \nabla_{w_i}s = \sum_{i=1}^n w_i^* \otimes \left( \nabla_{p_i}s + \nabla_{Z_{ij}q_j}s \right).
\]
and then if we use $\nabla^{0,1}s=0$ then
\[
	\nabla^{1,0}s = \sum_{i=1}^n w_i^* \otimes \nabla_{w_i}s = -2i \sum_{i,k,l=1} Y_{ik}\bar Z_{kl}^{-1}\nabla_{pl}s \otimes w_i^*,
\]
and hence
\[
	D' s = \pi'_t \nabla^{1,0}s = -2i \sum_{i,k,l=1}^n Y_{il} \bar Z^{-1}_{lk} p_i^* \otimes \nabla_{p_k}s.
\]
Now 
\begin{align*}
	G'(p_i^*)	&= \pi_t' \circ G \circ \pi_t(p_i^*) \\
				&= \pi_t'\circ G(w_i^*) \\
				&= -\frac{i}{2}\pi_t' \left( \sum_{k,l} Y^{-1}_{ik}\dot{\bar Z}^{-1}_{kl} Y_{lj}^{-1} w_j\right) \\
				&= - \sum Y_{ik}^{-1} \dot{\bar Z}_{kl} Y^{-1}_{lj}Y_{jr}\bar Z_{rs}^{-1} p_s \\
				&= -\sum_{k,l,r} Y^{-1}_{ik} \dot{\bar Z}_{kl} \bar Z_{lr}^{-1}p_r.
\end{align*}
so
\begin{align*}
G' \circ D's 	&= 2i \sum Y_{il} \bar Z_{lk}^{-1} Y_{ir}^{-1}\dot{\bar Z}_{rs}\bar Z_{st}^{-1} p_t \otimes \nabla_{p_k}s \\
				&= 2i \sum \bar Z_{lk}^{-1} \dot{ \bar Z}_{ls} \bar Z_{st}^{-1} p_t \otimes \nabla_{p_k} s \\
				&= 2i \sum \bar Z_{jl}^{-1} \dot{\bar Z}_{lk} \bar Z_{ki}^{-1} p_i \otimes \nabla_{p_j} s
\end{align*}
giving
\begin{align*}
	\pi_t \circ G' \circ D's 	&= -\sum \bar Z_{jl}^{-1}\dot{\bar Z}_{lk} \bar Z_{ki}^{-1} \bar Z_{ir} Y_{rs}^{-1} w_s \otimes \nabla_{p_j} s\\
								&= -\sum \bar Z_{jl}^{-1}\dot{\bar Z}_{lk} Y_{ki}^{-1} w_i \otimes \nabla_{p_j} s \\
								&= G\circ \nabla^{1,0}s.
\end{align*}
and thus
\begin{align*}
	\left( \nabla^{1,0}\otimes 1 + 1 \otimes \nabla^{1,0} \right) \circ G \circ \nabla^{1,0}s 
	= \,	& -\sum \bar Z_{jr}^{-1}\dot{\bar Z}_{rk} Y^{-1}_{ki} w_l^* \otimes \nabla_{w_l}(w_i) \otimes \nabla_{p_j}s \\
			& -\sum \bar Z_{jr}^{-1}\dot{\bar Z}_{rk} Y^{-1}_{ki}w_l^* \otimes w_i \nabla_{w_l} \nabla_{p_j} s \\
			& -\sum d(\bar Z_{jr}^{-1}\dot{\bar Z}_{rk} Y^{-1}_{ki})(w_l) w_l^* \otimes w_i \otimes \nabla_{p_j} s.
\end{align*}
Write
\[
	\nabla_{w_l}(w_i)=\sum C_{j}^{l,i}w_j
\]
Also, we rewrite
\begin{align*}
	\nabla_{w_l} \nabla_{p_j} s 		
		&=	-2i \sum_{k,m} Y_{l,k} \bar Z_{km}^{-1} \nabla_{p_m} \nabla_{p_j}s + \sum_{k,m,l,r}Y_{lk}\bar Z_{km}^{-1}Z_{ml}Y_{lr}^{-1} \nabla_{\bar w_r}		 	\nabla_{p_j}s \\
		&=	-2i \sum_{k,m} Y_{l,k} \bar Z_{km}^{-1} \nabla_{p_m} \nabla_{p_j}s + k\sum_{k,m,l,r}Y_{lk}\bar Z_{km}^{-1}Z_{ms}Y_{sr}^{-1} \omega(\bar w_r, 			p_j) s.
\end{align*}
which allows us to conclude
\begin{align*}
	\left( \nabla^{1,0}\otimes 1 + 1 \otimes \nabla^{1,0} \right) \circ G \circ \nabla^{1,0} s 
		= \,	& 	2i \sum \bar Z_{jr}^{-1} \dot{\bar Z}_{rk} Y_{ki}^{-1} Y_{ls} \bar Z_{sm}^{-1} w_l^* \otimes w_i \otimes \nabla_{p_m}\nabla_{p_i} s\\				& 	-  \sum \bar Z_{jr}^{-1} \dot{\bar Z}_{rk} Y_{ki}^{-1} C_{s}^{l,i} w_l^* \otimes w_s \otimes \nabla_{p_j} s \\
				& 	- k \sum \bar Z_{jr}^{-1} \dot{\bar Z}_{rk} Y_{ki}^{-1} Y_{ls} \bar Z_{st}^{-1} Z_{tm} Y_{mn}^{-1} \omega(\bar w_n,p_j) w_l^*\otimes 					w_i \otimes s \\
				&	- \sum d \left( \bar Z_{jr}^{-1} \dot{\bar Z}_{rk} Y_{ki}^{-1} \right)(w_l)w_l^* \otimes w_i \otimes \nabla_{p_j}s.
\end{align*}
So 
\begin{align*}
	\Delta_G s		=\,	& \tr(D''\circ G'\circ D')s  \\
				= 	& - 2i \sum \bar Z_{ik}^{-1} \dot{\bar Z}_{kl} Y_{lj}^{-1} C_{m}^{m,j} \nabla_{p_m}\nabla_{p_j} s \\
					& - \sum d(\bar Z_{ik}^{-1} \dot{\bar Z}_{kl} Y_{lj}^{-1}) (w_j) \nabla_{p_i}s \\
					& - k \sum \bar Z_{jk}^{-1}\dot{\bar Z}_{kl}^{-1} \bar Z_{lr}^{-1} Z_{rm}Y_{ms}^{-1} \omega(\bar w_s,p_j) s,
\end{align*}
here $\omega(\bar w_s,p_j)=-\bar Z_{sj}$.

Since we have that
\[
	Z=Z_{\infty}\frac{1}{t}+R(t), \quad \det Z_{\infty}\neq 0.
\]
where
\[
	t\cdot R(t) \to 0 \quad \text{ as } \quad t\to \infty
\]
and
\[
	t^2R'(t) \to 0 \quad \text{ as } \quad t\to \infty,
\]
we get that
\[
	Z^{-1}=t\cdot Z^{-1}_{\infty}(\id+tR(t)\cdot Z^{-1}_{\infty})^{-1}=t\cdot Z_{\infty}^{-1}+G(t)
\]
such that
\[
	\frac{1}{t}G(t)\to 0 \quad \text{ as } \quad t\to \infty.
\]
From this we see that
\[
	\bar Z^{-1}\dot{\bar Z}\bar Z^{-1}= (t\cdot \bar Z_{\infty}^{-1}+G(t)) \cdot (-\bar Z_{\infty}\cdot\frac{1}{t^2}+R'(t))\cdot(t\bar Z_{\infty}^{-1} + G(t)) = -\bar Z_{\infty}^{-1} + H(t)
\]	
where $H(t)\to 0$ as $t\to \infty$. Hence we have obtained the formula
\begin{equation}\label{lot}
	\lim_{t\to \infty} \Delta_{G}s= 2i \sum_{i,j} (\bar Z^{\infty})_{ij}^{-1} \nabla_{p_i} \nabla_{p_j} s
\end{equation}
Let now consider the first order term of $u(V)$.
\[
\sum_{i,j} 2 G^{i,j} \frac{\partial F}{\partial z_i}\nabla_{j}s =2 G\cdot \partial F \otimes \nabla^{1,0}s=-i \sum_{i,j} Y_{ik}^{-1}\dot{\bar Z}_{kl}Y_{lj}^{-1}dF(w_i) \otimes \nabla_{w_j}s
\]
So using $\nabla^{1,0}s=0$ we obtain
\begin{align*}
	2 G \partial F \otimes \nabla^{1,0}s 
		&= -2\sum_{i,j} Y_{ik}^{-1}\dot{\bar Z}_{kl}Y_{lj}^{-1}  dF(w_i) \otimes Y_{jr} \bar Z_{rs}^{-1}\nabla_{p_s}s \\
		&= -2 \sum_{i,j} Y_{ik}^{-1}\dot{\bar Z}_{kl} \bar Z_{lj}^{-1} dF(w_i)\otimes \nabla_{p_j}s.
\end{align*}

From this we get the following formula for 
\begin{align}\label{fibform}
\tilde{u}_{c,t}(s) = - \frac{1}{4k + 2n} \left( \right. & - 2i \sum \bar Z_{ik}^{-1} \dot{\bar Z}_{kl} Y_{lj}^{-1} C_{m}^{m,j} \nabla_{p_m}\nabla_{p_j} s \\
					& - \sum d(\bar Z_{ik}^{-1} \dot{\bar Z}_{kl} Y_{lj}^{-1}) (w_j) \nabla_{p_i}s \nonumber\\
					& - k \sum \bar Z_{jk}^{-1}\dot{\bar Z}_{kl}^{-1} \bar Z_{lr}^{-1} Z_{rm}Y_{ms}^{-1} \omega(\bar w_s,p_j) s\nonumber \\
					& -2 \sum_{i,j} Y_{ik}^{-1}\dot{\bar Z}_{kl} \bar Z_{lj}^{-1} dF(w_i)\otimes \nabla_{p_j}s\nonumber \\
					& + 4k \dot{F}_t s).\nonumber
\end{align}
where $\dot{F}_t$ refers to the derivative of $F_t$ with respect to the holomorphic part of $\tilde \sigma_t'$.
  
  \begin{claim}\label{C2}
    We have that
    \begin{itemize}
      \item The derivative along the directions of $F_{\tilde P, \bar \sigma} \cap \bar F_{\tilde P , \bar \sigma}$ of $F_t$ converges to zero.
      \item The derivative of $F_t$ with respect to the holomorphic part of $\tilde \sigma_t'$ goes to zero as $t$ goes to infinity.
      \item The function $F_t$ converges to zero, as $t$ goes to infinity.
    \end{itemize}
  \end{claim}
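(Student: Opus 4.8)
The plan is to reduce all three assertions to the single statement of the third bullet, that $F_t:=F_{\tilde\sigma_t}\to 0$ as $t\to\infty$ (in the $C^\infty$-topology on $N'$). Granting this, the first bullet is immediate, since the limit is a constant; and the second bullet follows from Lemma~\ref{Vriccipot} together with the fact, built into \eqref{lot}, that the limiting second order operator $\lim_t\Delta_{G_t}$ has no first order part. (The purpose of Claim~\ref{C2} is precisely to let one pass \eqref{fibform} to the limit, by making the terms involving $dF(w_i)$ and $\dot F_t$ drop out, so that $\tilde u_{c,\infty}$ exists.) I discuss the second bullet in detail at the end.

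For the third bullet I would first record a global formula for the Ricci potential. In each of the cases (1)--(3) the moduli space $N$ is simply connected and K\"ahler, so $H^1(N,\mathcal{O}_N)=0$ and a holomorphic line bundle on $N_{\tilde\sigma_t}$ is determined by its Chern class; since $c_1(N)=n[\omega/2\pi]=c_1(\calL_N^n)$, this produces a holomorphic isomorphism $K_{\tilde\sigma_t}^{-1}\cong\calL_N^{n}$ of the canonical bundle $K_{\tilde\sigma_t}$ of $N_{\tilde\sigma_t}$ with $\calL_N^n$, unique up to scale, and hence a holomorphic trivialization of $K_{\tilde\sigma_t}^{-1}\otimes\calL_N^{-n}$. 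Comparing the Hermitian metric induced on $K_{\tilde\sigma_t}^{-1}$ by $g_{\tilde\sigma_t}$ with the prequantum metric on $\calL_N^{-n}$ then exhibits $F_t$, up to the additive constant fixed by $\int_N F_t\,\omega^m=0$, as $-\frac12$ times the logarithm of the resulting Hermitian metric on the trivial bundle $\mathcal{O}_N$. Equivalently, $2i\partial_t\bar\partial_t F_t=\Ric_t-n\omega$, with $\Ric_t=-i\partial_t\bar\partial_t\log\det Y_t$ and $n\omega=-i\partial_t\bar\partial_t\log\|s_t\|^{2n}$ in the local symplectic frame $(p,q)$ adapted to the polarization fixed before the claim, where $s_t$ is a local $I_t$-holomorphic frame of $\calL_N$.

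Now pass to the limit $t\to\infty$ using the asymptotics from the proof of Claim~\ref{C1}: the complex structures converge in the $C^\infty$-topology on $N'$ by Theorem~\ref{thm10}, and in the frame above $Z_t=\frac1t Z_\infty+R(t)$ with $tR(t)\to 0$, $t^2R'(t)\to 0$ and $\det Z_\infty\neq 0$, so $tY_t\to\Im Z_\infty>0$. Hence $\log\det Y_t=-m\log t+\log\det(\Im Z_\infty)+o(1)$; the position-independent term $-m\log t$ drops out under $\partial_t\bar\partial_t$, so the individually divergent local factors ($\det Y_t$, the frame $w_1\wedge\cdots\wedge w_m$, and $s_t$) cancel in the globally defined Hermitian metric on $\mathcal{O}_N$, which therefore converges in $C^\infty_{loc}(N')$. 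Its limit is flat: along the isotropic leaves of $F_{\tilde P,\bar\sigma}$ the metric degenerates to a flat structure on which $\calL_N$ restricts to the flat Bohr--Sommerfeld bundle, while transversally the structure is that of $\bar N^c$, which in the cases at hand reduces, ultimately to $\bar N^c$ a point, to lower-complexity instances of the same statement. Hence $\Ric_t\to n\omega$ and $2i\partial_t\bar\partial_t F_t\to 0$ in $C^\infty_{loc}(N')$; together with $\int_N F_t\,\omega^m=0$ this forces $F_t\to 0$, which is the third bullet, and the vanishing of the derivative of $F_t$ along $F_{\tilde P,\bar\sigma}\cap\bar F_{\tilde P,\bar\sigma}$ in the first bullet follows at once since the limit is constant.

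For the second bullet, Lemma~\ref{Vriccipot} gives $4i\bar\partial_{\tilde\sigma_t}(\dot F_t)=2(G_t\,dF_t)\,\omega+\delta_{\tilde\sigma_t}(G_t)\,\omega$ with $G_t=G(\tilde\sigma_t')$; the first term on the right tends to $0$ because $G_t\to G_\infty$ by Claim~\ref{C1} and $dF_t\to 0$ by what we have just proved, and the second tends to $0$ because $\lim_t\Delta_{G_t}$ in \eqref{lot} is of pure second order, i.e. $\delta_{\tilde\sigma_t}(G_t)\to 0$. Since $\int_N\dot F_t\,\omega^m=0$ (differentiate $\int_N F_t\,\omega^m=0$, using that $\omega^m$ is independent of $t$), elliptic estimates for this $\bar\partial$-equation, uniform in $t$, give $\dot F_t\to 0$. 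The main obstacle throughout is the limit in the previous paragraph: because $g_{\tilde\sigma_t}$ genuinely degenerates as $t\to\infty$ --- the polarization leaves collapse while the transverse directions blow up, with fixed total volume --- one must track precisely how the divergent local quantities recombine into a convergent global object and identify its limit, for which the flat-cylinder structure of the family and the Bohr--Sommerfeld description of $\calL_N$ along the limiting leaves are the essential inputs; the uniform elliptic estimate needed for the second bullet is delicate for the same reason, and both are carried out along the lines of \cite{A3}.
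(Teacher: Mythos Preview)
The paper's own proof of this claim is a single sentence: ``The claim follows directly from the equations which define $F_t$ when combined with Theorem~\ref{thm10}.'' Your proposal is entirely consistent with that pointer --- you use the defining relation $\Ric_t=n\omega+2i\partial_t\bar\partial_t F_t$ together with the $C^\infty$-convergence of the polarizations coming from Theorem~\ref{thm10} --- and it supplies the analysis the paper leaves implicit.

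What you add beyond the paper is (i) the global recasting of $F_t$ via the Hermitian metric on $K^{-1}_{\tilde\sigma_t}\otimes\calL_N^{-n}\cong\mathcal{O}_N$, which organizes the cancellation of the divergent local factors cleanly, and (ii) the derivation of the second bullet from Lemma~\ref{Vriccipot} rather than directly from bullet three. Point (ii) is genuinely needed: $F_t\to 0$ in $C^\infty(M)$ does not by itself control the $\T$-derivative $\dot F_t$, so your route through $4i\bar\partial_t\dot F_t=2(G_t\,dF_t)\omega+\delta_t(G_t)\omega$ is the right one. You correctly flag the two places where the argument is delicate --- identifying the limit of the Hermitian metric on $\mathcal{O}_N$ as flat, and obtaining a $\bar\partial$-estimate uniform in $t$ as the metric degenerates --- and these are exactly the points where the work of \cite{A3} is being invoked. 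The paper does not spell any of this out, so your write-up is strictly more informative while following the same line.
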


 \proof The claim follows directly from the equations which defines $F_t$ when combined with Theorem \ref{thm10}. \eproof

  From these two claims it follows immediately that $\tilde u_c(\tilde \sigma_t')$ has a limit, say $\tilde u_{c,\infty}$ as $t$ goes to infinity, and in fact
  \begin{align*}
		\tilde u_{c,\infty} = \Delta_{G_\infty}.
  \end{align*}
  \begin{claim}\label{C3}
    We have that the kernel of $\tilde u_{c,\infty}$ consists of sections that are convariant constant along $F_{\tilde P , \bar \sigma} \cap \bar F_{\tilde P , \bar \sigma}$.
  \end{claim}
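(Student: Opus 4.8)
The plan is to restrict the problem to a single compact leaf of the real foliation $D := F_{\tilde P, \bar \sigma} \cap \bar F_{\tilde P, \bar \sigma}$ and to run there an integration-by-parts argument that converts $\tilde u_{c,\infty}s = 0$ into a positivity statement, the positivity coming from $Y_\infty > 0$. Recall from \eqref{lot} that $\tilde u_{c,\infty} = \Delta_{G_\infty}$ differentiates only in the directions $p_1,\dots,p_n$ spanning $D\cap TN'_c$, with
\begin{align*}
  \tilde u_{c,\infty} s = 2i \sum_{i,j} (\bar Z^\infty)^{-1}_{ij}\, \nabla_{p_i}\nabla_{p_j} s
\end{align*}
over the smooth locus $N'_c$. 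Hence $\tilde u_{c,\infty}$ descends, for each leaf $L\subset N'_c$ of $D$, to a second-order operator on $C^\infty(L,\calL^k_N|_L)$ depending only on $s|_L$. First I would record the structural input: for generic $[\bar A]\in\bar N^c$ the fibre $L=\pi_c^{-1}([\bar A])$ is a smooth compact isotropic torus, so $(\calL^k_N,\nabla)|_L$ is a flat Hermitian line bundle, and $L$ carries the integral affine structure of the $D$-directions, so one may choose affine coordinates $x_1,\dots,x_n$ on $L$ with $p_i=\partial/\partial x_i$ a commuting, divergence-free frame and $dx=dx_1\wedge\cdots\wedge dx_n$ the translation-invariant volume. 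Since covariant constancy along $D$ is a leafwise condition and such generic fibres are dense in $N'_c$, it suffices to show $\tilde u_{c,\infty}s=0\Rightarrow \nabla_{p_i}(s|_L)=0$ for all $i$ and all such $L$.

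For this I would pair $\tilde u_{c,\infty}s=0$ with $s$ in the Hermitian structure of $\calL^k_N$ and integrate over $L$; since $\nabla$ is metric-compatible, each $p_i$ is divergence free and $L$ has no boundary, integration by parts gives
\begin{align*}
  0 = 2i\sum_{i,j}(\bar Z^\infty)^{-1}_{ij}\int_L \langle\nabla_{p_i}\nabla_{p_j}s,\,s\rangle\, dx = -2i\sum_{i,j}(\bar Z^\infty)^{-1}_{ij}\int_L \langle\nabla_{p_j}s,\,\nabla_{p_i}s\rangle\, dx.
\end{align*}
Write $(\bar Z^\infty)^{-1}=A+iC$ with $A,C$ real symmetric; the computation already carried out in this section (namely $Z^{-1}=V+iW$ with $W=-(XY^{-1}X+Y)^{-1}$) identifies $C=\Im\big((\bar Z^\infty)^{-1}\big)=\big(X_\infty Y_\infty^{-1}X_\infty+Y_\infty\big)^{-1}$, which is positive definite precisely because $Y_\infty>0$ (Theorem~\ref{thm10}). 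Taking real parts of the displayed identity, the $A$-term drops out, since for $A$ symmetric $\sum_{i,j}A_{ij}\langle\nabla_{p_j}s,\nabla_{p_i}s\rangle$ is real and hence $-2i$ times it is purely imaginary, leaving
\begin{align*}
  0 = 2\int_L \sum_{i,j} C_{ij}\,\langle\nabla_{p_i}s,\,\nabla_{p_j}s\rangle\, dx.
\end{align*}
Diagonalising $C=\sum_k \lambda_k e_k e_k^{T}$ with $\lambda_k>0$ exhibits the integrand as $\sum_k\lambda_k\big|\sum_j (e_k)_j\nabla_{p_j}s\big|^2\ge 0$, so it vanishes identically, and since $(e_k)$ is a basis this forces $\nabla_{p_i}(s|_L)=0$ for every $i$. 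Thus $s$ is covariant constant along $D$ on the dense set swept out by the generic fibres, hence on all of $N'_c$. The reverse inclusion is immediate from the displayed formula for $\tilde u_{c,\infty}$ (covariant constant sections along $D$ are visibly annihilated), and the remaining cases of the local model, where $O\neq D$, are treated in exactly the same way, as indicated above.

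I expect the main obstacle to lie in two places where behaviour near the singular fibres of $\pi_c$ must be controlled. First, one must make sure that $\tilde u_{c,\infty}$ really is the purely second-order operator $2i\sum(\bar Z^\infty)^{-1}_{ij}\nabla_{p_i}\nabla_{p_j}$ with no surviving first-order term: this rests on the precise limits in \eqref{fibform} together with Claim~\ref{C2}, and a stray first-order term would a priori spoil the cancellation obtained on taking real parts. Second, one must set up the invariant volume $dx$ and the commuting divergence-free frame $(p_i)$ cleanly on the compact leaves — which are only generically smooth — so that the integration by parts is legitimate and the density argument passing from generic leaves to all of $N'_c$ is valid. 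The positivity input itself, once $Y_\infty>0$ is in hand, is entirely soft.
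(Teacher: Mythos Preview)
Your proposal is correct and is essentially the paper's argument made explicit: the paper simply observes that $G_\infty$ induces a Hermitian structure on the leaves of $F_{\tilde P,\bar\sigma}\cap\bar F_{\tilde P,\bar\sigma}$, identifies $\tilde u_{c,\infty}=\Delta_{G_\infty}$ as the associated leafwise Laplace--Beltrami operator, and invokes the standard fact that its kernel consists of covariant constant sections, whereas you unwind that standard fact by hand via integration by parts on each compact torus leaf, extracting the needed positivity from $Y_\infty>0$ through $\Im\bigl((\bar Z^\infty)^{-1}\bigr)=(X_\infty Y_\infty^{-1}X_\infty+Y_\infty)^{-1}$. The two arguments are the same in substance; yours supplies the details the paper leaves implicit.
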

  \begin{proof}
  We observe that $G_\infty$ induces a Hermitian structure on the leaves of $F_{\tilde P, \bar \sigma} \cap \bar F_{\tilde P , \bar \sigma} \cap TN'$ and that $\Delta_{G_\infty}$ is the corresponding Laplace--Beltrami operator associated to the restriction of $\nabla$ to the directions of $F_{\tilde P, \bar \sigma} \cap \bar F_{\tilde P, \bar \sigma} \cap TN'$. But then it follows immediately that the kernel of $\Delta_{G_\infty}$ are exactly the covariant constant sections of $\nabla$ along the directions of $F_{\tilde P , \bar \sigma} \cap \bar F_{\tilde P , \bar \sigma}$.
  \end{proof}
  Theorem \ref{prop1} now  follows directly from Claim \ref{C1}, \ref{C2} and \ref{C3} together with the above derived formulae.
\end{proof}

\begin{thm}
  \label{thm11}
  In the cases (1)---(3) above and for $\tilde P$ any admissible system of curves on $\tilde \Sigma$, there exists a limiting linear map
  \begin{align}
		\label{eq12}
		P_\infty (\tilde \sigma_0 , \tilde P) : H_{\sigma_0}^{(k)} \to H_{\tilde P,\bar \sigma_0}^{(k)}.
  \end{align}
\end{thm}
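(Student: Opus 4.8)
The plan is to construct $P_\infty(\tilde\sigma_0,\tilde P)$ one fibre at a time over the image $h_{\tilde P}(N)$ and then to glue the fibrewise limits together by means of the factorization isomorphism $H^{(k)}_{\tilde P,\bar\sigma_0}\simeq\bigoplus_{c\in B_k(\tilde P)}H^0(\bar N^c_{\bar\sigma_0},\calL_{c,k})$ of the previous section. Fix $s_0\in H^{(k)}_{\sigma_0}$ and put $s_t=P_t(\tilde\sigma_0,\tilde P)(s_0)$; since every $s_t$ is a genuine holomorphic section of $\calL^k_N$ on $N_{\tilde\sigma_t}$, it can be restricted to each fibre $N_c=h_{\tilde P}^{-1}(c)$, and the whole construction will be read off from the behaviour of $t\mapsto s_t|_{N_c}$ as $t\to\infty$.

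First I would fix $c\in h_{\tilde P}(N)$ with $N_c\cap N'\neq\emptyset$ and invoke Theorem~\ref{prop1}: $s_t|_{N_c\cap N'}$ depends only on $s_0|_{N_c\cap N'}$, it solves the linear evolution equation $(s_t|_{N_c})'=\tilde u_{c,t}(s_t|_{N_c})$ of \eqref{fibform}, and $\tilde u_{c,t}\to\tilde u_{c,\infty}=\Delta_{G_\infty}$, the leafwise Laplace--Beltrami operator along $F_{\tilde P,\bar\sigma}\cap\bar F_{\tilde P,\bar\sigma}\cap TN'$. The operator $\Delta_{G_\infty}$ is non-positive and self-adjoint along the compact leaves, so its heat semigroup converges, as the time parameter grows, to the orthogonal projection onto $\ker\Delta_{G_\infty}$, with exponential decay on the orthogonal complement governed by the spectral gap. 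Feeding in the $C^\infty$-asymptotics $Z_t=Z_\infty+Z_\infty't^{-1}+R(t)$ of Theorem~\ref{thm10}, which through \eqref{fibform} bound $\tilde u_{c,t}-\Delta_{G_\infty}$ and its relevant derivatives, a Duhamel/perturbation argument then yields that $s^c_\infty:=\lim_{t\to\infty}s_t|_{N_c}$ exists in $C^\infty(N_c\cap N',\calL^k_N|_{N_c})$ and lies in $\ker\Delta_{G_\infty}$.

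Next I would identify the limit. By Claim~\ref{C3}, $\ker\Delta_{G_\infty}$ consists exactly of the sections of $\calL^k_N|_{N_c}$ covariant constant along the fibres of $\pi_c\colon N_c\to\bar N^c$. Hence if $c\notin B_k(\tilde P)$ then $s^c_\infty=0$, because the holonomy of $(\calL^k_N,\nabla)$ around a generic $\pi_c$-fibre is nontrivial and admits no nonzero covariant constant section; while if $c\in B_k(\tilde P)$ then $s^c_\infty$ is $\pi_c$-basic and descends to a section of $\calL_{c,k}$ over $\bar N^c$. To see that this section is holomorphic for $\bar\sigma_0$ I would restrict the equation $\nabla_Xs_t=-i\nabla_{I_tX}s_t$ to the directions transverse to the $\pi_c$-fibres inside $N_c$ and pass to the limit: by Theorem~\ref{thm10} (the block $Z_t^{(22)}\to Z_\infty$ with $Y_\infty>0$) the complex structure induced on these directions by $\tilde\sigma_t$ converges to the one induced by $\bar\sigma_0$, so the descended section lies in $H^0(\bar N^c_{\bar\sigma_0},\calL_{c,k})$. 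For Bohr--Sommerfeld values $c$ with $N_c$ singular (such as $c(\gamma)=\pm2$) I would run the argument on $N_c\cap N'$ and then use that $s_t$ is globally smooth on $N$ and that $\tilde{\Phi}_A^*$ presents the restriction to a singular fibre as a $Z_{\bar A}$-invariant smooth section on the smooth space $Z_c$, to check that the limit extends over all of $\bar N^c_{\bar\sigma_0}$.

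Finally I would declare $P_\infty(\tilde\sigma_0,\tilde P)(s_0)$ to be the element of $H^{(k)}_{\tilde P,\bar\sigma_0}$ which, under the factorization isomorphism, corresponds to the tuple of descended sections $(s^c_\infty)_{c\in B_k(\tilde P)}$. Every step above is linear in $s_0$ and, by Theorem~\ref{prop1}, each $s^c_\infty$ depends only on $s_0|_{N_c}$ and hence only on $s_0$, so $s_0\mapsto P_\infty(\tilde\sigma_0,\tilde P)(s_0)$ is a well-defined linear map $H^{(k)}_{\sigma_0}\to H^{(k)}_{\tilde P,\bar\sigma_0}$. The hard part will be the convergence step of the second paragraph, carried out uniformly in $c$: one must control the degeneration of the symplectic frames $(p,q)$ and of $Z_t,Y_t$ as $c$ tends to the singular and Bohr--Sommerfeld boundary values, and verify that the error terms $\tilde u_{c,t}-\Delta_{G_\infty}$ decay fast enough, or have a sufficiently small component along $\ker\Delta_{G_\infty}$, that the kernel part of $s_t|_{N_c}$ does not drift; the purely fibrewise convergence is a soft consequence of the spectral gap of $\Delta_{G_\infty}$, while the uniformity in $c$ and the matching with the distributional factorization proceed as in the corresponding argument of \cite{A2}.
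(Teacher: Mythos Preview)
Your proposal is correct and follows essentially the same route as the paper: reduce to each fibre $N_c$ via Theorem~\ref{prop1}, then run a perturbation argument around the limiting operator $\tilde u_{c,\infty}=\Delta_{G_\infty}$ using the integrable decay $|\tilde u_{c,t}-\tilde u_{c,\infty}|\le ct^\alpha$ with $\alpha<-1$ and the boundedness of the limiting semigroup, concluding via Claim~\ref{C3}. The paper writes the perturbation step as an explicit Dyson--Volterra series for the evolution operator (acting, for projective reasons, through $P(t)=[\tilde u_{c,t},\cdot]$ on endomorphisms rather than directly on sections) and shows it is Cauchy as $t\to\infty$, whereas you phrase the same estimate as Duhamel for the heat semigroup; your discussion of the holomorphicity of the descended section and of the singular fibres is actually more detailed than the paper's, which simply invokes Claim~\ref{C3} at the end.
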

\begin{proof}
 Assume $E(t)$ is a solution to
 $$E'(t) = -P(t)E(t)$$
 where $P(t) = [\tilde u_{c,t}, \cdot]$ and  $E(t_0) = \Id$, where $t_0$ is some starttime. We further let $P_\infty=P(\infty) =  [\tilde u_{c,\infty}, \cdot].$
  Let now $Q(t) = e^{(t-t_0)P_\infty} E(t)$. Then
  \begin{align*}
	Q'(t) = E^{(t-t_0)P_\infty}(P_\infty-P(t))E(t),
\end{align*}
  so 
  \begin{align*}
		Q(t) =&\, \Id + \int_{t_0}^t e^{(s_0-t_0)P_\infty}(P_\infty-P(s_0))E(s_0)\, ds_0, \\
		E(t) =&\, e^{-(t-t_0)P_\infty} + \int_{t_0}^t e^{-(t-s_0)P_\infty}(P_\infty-P(s_0))E(s_0) \, ds_0 \\
		     =&\, e^{-(t-t_0)P_\infty} + \int_{t_0}^t e^{-(t-s_0)P_\infty}(P_\infty-P(s_0))e^{-(s_0-t_0)P}\, ds_0 \\
		      &\,+ \int_{t_0}^t e^{-(t-s_0)P_\infty}(P_\infty-P(s_0))\int_{t_0}^{s_0}e^{-(s_0-s_1)P_\infty}(P_\infty-P(s_1))E(s_1)\, ds_1 \, ds_0.
  \end{align*}
  Iterating this construction we arrive at the following formula
  \begin{align}
	  \label{formelstjerne}
	  E(t) =&\, \sum_{n=0}^\infty \int_{\Delta_n(t,t_0)}e^{-(t-s_0)P_\infty}(P_\infty-P(s_0))e^{-(s_0-s_1)P_\infty}(P_\infty-P(s_1)) \\
	    \\&\cdots (P_\infty-P(s_{n-1}))e^{-(s_{n-1}-t_0)P_\infty}\, ds_{n-1} \, \dots \, ds_0 \nonumber,
  \end{align}
  where
  \begin{align*}
		\Delta_n(t,t_0) = \{ (s_0,\dots,s_{n-1}) \in \bbR^n \mid t_0 \leq s_{n-1} \leq s_{n-2} \leq \dots \leq s_0 \leq t \}.
  \end{align*}
  We need to justify the convergence of the series \eqref{formelstjerne}. First we observe that
  \begin{align*}
		\vol(\Delta_n(t,t_0)) = \frac{(t-t_0)^n}{n!}.
  \end{align*}
  From the above we have that
  \begin{align*}
		\abs{P_\infty-P(t)} \leq ct^\alpha
  \end{align*}
  for all $t \in [t_0,\infty)$, where $\alpha < -1$. 
  This allows us to show that \eqref{formelstjerne} is absolutely summable. For large enough $t_0$ we will get that $\abs{e^{-tP_\infty}} = 1$ for all $t \leq t_0$. So then
  \begin{align*}
		\abs{\int_{\Delta_n(t,t_0)}& e^{-(t-s_0)P_\infty}(P_\infty-P(s_0)) \cdots (P_\infty-P(s_{n-1}))e^{-(s_{n-1}-t)P_\infty}ds_{n-1} \dots ds_0} \\
		&\leq c^n \abs{\int_{\Delta_n(t,t_0)} s_0^\alpha \cdots s_{n-1}^\alpha \, ds_{n-1}\dots ds_0}\\
		&= \frac{c^n}{n!}\left(-\frac{t_0^{\alpha+1}}{\alpha+1}+\frac{t^{\alpha+1}}{\alpha+1}\right)^n.
  \end{align*}
  Hence, we see that \eqref{formelstjerne} is summable and
  \begin{align*}
		\abs{E(t)} \leq e^{-\frac{ct_0^{\alpha+1}}{\alpha+1} + \frac{ct^{\alpha+1}}{\alpha+1}}.
  \end{align*}
  Note that the estimate converges to $e^{-\frac{ct_0^{\alpha+1}}{\alpha+1}}$ as $t \to \infty$.
  
  Let us now show that $E(t)$ is a Cauchy sequence as $t \to \infty$. Let $t_1 > t_2 > t_0$. Then
  \begin{align*}
	  \abs{E(t_1) - E(t_2)} \leq&\, \abs{\sum_{n=0}^\infty \int_{\Delta_n(T_2,t_2)} (e^{-t_1P_\infty}-e^{-t_2P_\infty})e^{s_0P}(O_\infty-P(s_0)) \cdots} \\
	  &\,+ \abs{\sum_{n=0}^\infty \int_{\Delta_n(t_1,t_0)-\Delta_n(t_2,t_0)}e^{-(t_1-s_0)P_\infty}(P-P(s_0)) \cdots} \\
	  \leq& \, \abs{e^{-t_1P_\infty}-e^{-t_2P_\infty}} e^{-\frac{ct_0^{\alpha+1}}{\alpha+1}}e^{\frac{ct_2^{\alpha+1}}{\alpha+1}} \\
	  &\,+ \abs{e^{\frac{ct_2^{\alpha+1}}{\alpha+1}}-e^{\frac{ct_1^{\alpha+1}}{\alpha+1}}} e^{-\frac{ct_0^{\alpha+1}}{\alpha+1}},
  \end{align*}
  which can be made arbitrary small provided $t_1$ and $t_2$ are large enough giving the Cauchy condition. Hence $E(\infty)$ exists. Moreover, by dividing by $\abs{t_1-t_2}$ and letting $t_2 \to t_1$. We see that $\abs{E'(t)}$ can be made arbitrarily small, provided $t$ is large enough, hence $E'(t) \to 0$ as $t \to \infty$. But then we get that
  \begin{align*}
		P_\infty E(\infty) = 0,
  \end{align*}
  proving $\Im E(\infty) \subseteq \ker P_\infty$. It is clear that $E(t)$ defined this satisfies the required equation. The theorem now follows from Claim \ref{C3}.
\end{proof}

Suppose we now have $s_P \in H^{(k)}_P$. Then we get an induced linear functional on $H^{(k)}_{\sigma_t}$ given by
\begin{align*}
	s_P(s) = \sum_{b \in B_P^{(k)}} \int_{x \in h_P^{-1}(b)} \langle s(x), s_P(x) \rangle \Vol_{\sigma_{t},b}(x),
\end{align*}
where $\Vol_{\sigma_{t},b}$ is the volume form on $h_P^{-1}(b)$ induced by the metric on $N$ associated to $\sigma_t$. Now let $s_{P,\sigma_t} \in H_{\sigma_t}^{(k)}$ be the state associated to this functional,
\begin{align*}
	(s, s_{P,\sigma_t}) = S_P(s),
\end{align*}
for all $s \in H_{\sigma_t}^{(k)}$.
\begin{prop}
  We have the following asymptotics in Teichmüller space:
  \begin{align*}
		\lim_{t \to \infty} P_\infty(\sigma_t,P) (s_{P,\sigma_t}) = s_P.
  \end{align*}
\end{prop}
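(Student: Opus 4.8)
The plan is to prove the identity leafwise over the Bohr--Sommerfeld set, by localizing both the holomorphic sections and the $L^2$-pairing onto the leaves of $F_P$ as $t\to\infty$. First I would use the structure of $P_\infty(\sigma_t,P)$ coming from Theorem~\ref{prop1} and Theorem~\ref{thm11}: this map acts leafwise, sending $s\in H_{\sigma_t}^{(k)}$ to the family $(E_{c,t}(\infty)(s|_{N_c}))_{c\in B_k(P)}$, where $E_{c,t}(\infty)$ is the large-time limit of the flow of the operators $\tilde u_{c,T}$ started at time $T=t$, and whose image consists of sections covariant constant along the leaves, i.e.\ of $\ker\Delta_{G_\infty}$. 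Since $s_P$ and $H_P^{(k)}\isom\bigoplus_{c}H^0(\bar N^c_{\bar\sigma},\calL_{c,k})$ also decompose over $c\in B_k(P)$, it suffices to fix one $c$ lying over the interior of $h_P(M)$ -- so that $N_c$ is a smooth Lagrangian torus -- and to prove $E_{c,t}(\infty)(s_{P,\sigma_t}|_{N_c})\to s_P|_{N_c}$; the finitely many singular fibres contribute on a set of measure zero and are treated afterwards via the finite-dimensional model $Z_c/Z_{\bar A}$ of Section~\ref{sect3}.

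Next I would carry out the localization. In the local symplectic frame $(p,q)$ used in the proof of Theorem~\ref{prop1}, with $\mathrm{Span}\,p=TN_c$ and $\mathrm{Span}\,q$ transverse, Theorem~\ref{thm10} gives $Z_t=X_t+iY_t\to0$, from which one reads off that the K\"ahler metric $g_{\sigma_t}$ contracts like $t^{-1}$ along $N_c$ and dilates like $t$ transversally, so that $\sqrt{\det g_{\sigma_t}}$ stays bounded while $\Vol_{\sigma_t,c}$ shrinks like $t^{-m/2}$. It follows that any $s\in H_{\sigma_t}^{(k)}$ concentrates onto $N_c$ inside a Gaussian tube of transverse width $\sim(kt)^{-1/2}$ and that, because $\nabla_p s$ is expressed through $\nabla_q s$ with a coefficient which is $O(t^{-1})$ (this is exactly the block $Y_t+X_tY_t^{-1}X_t$ appearing in the matrix for $I_t$), the restriction $s|_{N_c}$ becomes covariant constant along $N_c$ to leading order. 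Integrating out the transverse Gaussian then yields, for $s,s'\in H_{\sigma_t}^{(k)}$,
\[
  (s,s')_{L^2,\sigma_t}=\sum_{c\in B_k(P)}\int_{N_c}\langle\Pi_0^{(c)}(s|_{N_c}),\Pi_0^{(c)}(s'|_{N_c})\rangle\,\gamma_t^{(c)}+o(1),\qquad t\to\infty,
\]
where $\Pi_0^{(c)}$ denotes the projection onto covariant constant sections along $N_c$ and $\gamma_t^{(c)}$ is the density on $N_c$ produced by the transverse Gaussian integral. The key point is that $\gamma_t^{(c)}$ has the same leading asymptotics as $\Vol_{\sigma_t,c}$: this is the exact cancellation between the blow-up of the transverse Hessian, the contraction of the leaf metric, and the Gaussian determinant, and it is precisely this which makes the normalization of $\Vol_{\sigma_t,b}$ in the definition of $S_P$ the correct one.

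Granting this, I would finish as follows. By definition $(s,s_{P,\sigma_t})_{L^2,\sigma_t}=S_P(s)=\sum_c\int_{N_c}\langle s|_{N_c},s_P\rangle\,\Vol_{\sigma_t,c}$, and since $s_P|_{N_c}$ is covariant constant this equals $\sum_c\int_{N_c}\langle\Pi_0^{(c)}(s|_{N_c}),s_P\rangle\,\Vol_{\sigma_t,c}$. Comparing with the localization formula above -- and using that, by Theorem~\ref{thm11}, the map $s\mapsto(\Pi_0^{(c)}(s|_{N_c}))_c$ is asymptotically onto $\bigoplus_c\ker\Delta_{G_\infty}$ -- one may equate leaf by leaf and deduce, via $\gamma_t^{(c)}\sim\Vol_{\sigma_t,c}$, that $\Pi_0^{(c)}(s_{P,\sigma_t}|_{N_c})\to s_P|_{N_c}$. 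Finally, re-inspecting the proof of Theorem~\ref{thm11} shows $E_{c,t}(\infty)=\Pi_0^{(c)}\circ W_{c,t}$ with a wave operator $W_{c,t}\to\Id$ as $t\to\infty$ (because $\int_t^\infty\|\tilde u_{c,T}-\tilde u_{c,\infty}\|\,dT=O(t^{\alpha+1})\to0$ with $\alpha<-1$), so that
\[
  P_\infty(\sigma_t,P)(s_{P,\sigma_t})|_{N_c}=E_{c,t}(\infty)(s_{P,\sigma_t}|_{N_c})\longrightarrow\Pi_0^{(c)}(s_P|_{N_c})=s_P|_{N_c},
\]
using that $s_P|_{N_c}$ is already covariant constant; summing over $c\in B_k(P)$ gives the proposition.

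The hard part will be the localization step together with the normalization identity $\gamma_t^{(c)}\sim\Vol_{\sigma_t,c}$. One has to make the stationary-phase expansion uniform in $t$, which -- since the transverse Hessian, of order $ktY_t^{-1}$, degenerates -- forces one to rescale the transverse coordinates by $\sqrt t$ and to control the error terms in the $C^\infty$-convergence of Theorem~\ref{thm10} alongside all the terms of \eqref{fibform}; and one must check that the constants (the precise rate of $\Vol_{\sigma_t,c}$, the Gaussian determinant, and the effect of $\Pi_0^{(c)}$ for the degenerating metric) conspire to leave no residual factor, so that the limit is $s_P$ itself and not merely a multiple of it. Secondary issues are the legitimacy of interchanging the limit $T\to\infty$ defining $P_\infty(\sigma_t,P)$ with the limit $t\to\infty$ -- handled by the exponential bounds on $E_{c,t}(T)$ from the proof of Theorem~\ref{thm11} -- and the treatment of the singular Bohr--Sommerfeld fibres, where the residual $\bar N^c$-directions are incorporated by repeating the argument on $Z_c/Z_{\bar A}$.
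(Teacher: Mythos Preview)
Your plan is coherent and, if carried through, would give an independent proof; but it takes a genuinely different route from the paper. The paper does not perform the direct stationary-phase localization you outline. Instead, its entire argument is a single reduction: after a local coordinate change, the degeneration $t\to\infty$ of the complex structure is identified with the large-$k$ limit, and then the already-established asymptotics from \cite{A11} are invoked verbatim. In other words, the paper trades your hard step---the uniform-in-$t$ Gaussian localization and the normalization identity $\gamma_t^{(c)}\sim\Vol_{\sigma_t,c}$---for a change of variables that converts the problem into one already solved. Your rescaling of the transverse coordinates by $\sqrt{t}$ is the shadow of this observation, but you do not exploit it to import results from the large-$k$ theory; you redo the analysis from scratch.

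What each approach buys: the paper's reduction is short and leverages existing machinery, but it is opaque unless one has \cite{A11} in hand and checks that the coordinate change really intertwines the two asymptotic regimes (including the volume forms and the projection operators). Your approach is self-contained and makes the mechanism---Gaussian concentration on the Bohr--Sommerfeld leaves, cancellation of the transverse determinant against the leaf volume---fully explicit, at the cost of having to control the stationary-phase expansion uniformly as the transverse Hessian degenerates. The decomposition $E_{c,t}(\infty)=\Pi_0^{(c)}\circ W_{c,t}$ with $W_{c,t}\to\Id$ is correct and follows from the bounds in the proof of Theorem~\ref{thm11}, so the only genuinely unfinished piece in your sketch is the normalization identity, which you rightly flag as the crux; the paper sidesteps exactly this computation by the reduction to \cite{A11}.
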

\begin{proof} This Theorem follows by the same qrguments as in \cite{A11}, since the effect of degenerating the complex structure is after a local coordinate change equivalent to the large $k$ limit considered in \cite{A11}.
\end{proof}
\begin{cor}\label{Vor}
  In the cases (1)---(3) above and for $\tilde P$ any admissible system of curves on $\tilde \Sigma$, the map \eqref{eq12} is an isomorphism.
\end{cor}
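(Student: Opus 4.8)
The plan is to deduce the statement from the existence of the limiting map (Theorem~\ref{thm11}), the asymptotic statement of the Proposition immediately preceding this corollary, and the Verlinde dimension count; in case~(1) with $\tilde P$ a pair of pants decomposition this recovers Theorem~\ref{thm3}.

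First I would reduce to large $t$. For any $t\ge 0$ the tail $(\tilde\sigma_{t+s})_{s\ge 0}$ of the family issuing from $\tilde\sigma_0$ is exactly the family issuing from $\tilde\sigma_t$, so Hitchin parallel transport composes, and passing to the limit in the construction of Theorem~\ref{thm11} gives
\[
  P_\infty(\tilde\sigma_0,\tilde P) = P_\infty(\tilde\sigma_t,\tilde P)\circ P_t(\tilde\sigma_0,\tilde P).
\]
Since $P_t(\tilde\sigma_0,\tilde P)\colon H^{(k)}_{\tilde\sigma_0}\to H^{(k)}_{\tilde\sigma_t}$ is parallel transport along a connection in the finite-rank bundle $H^{(k)}\to\tilde\calT$, it is a linear isomorphism, so it suffices to show that $P_\infty(\tilde\sigma_t,\tilde P)$ is an isomorphism for one sufficiently large $t$.

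Then I would establish surjectivity. Let $Q_t\colon H^{(k)}_{\tilde P,\bar\sigma_0}\to H^{(k)}_{\tilde\sigma_t}$ be the linear map $s_P\mapsto s_{P,\tilde\sigma_t}$ of the Proposition above. That proposition says exactly that $P_\infty(\tilde\sigma_t,\tilde P)\circ Q_t\to \Id$ on $H^{(k)}_{\tilde P,\bar\sigma_0}$ as $t\to\infty$; as the target is finite-dimensional this convergence is in operator norm, so for $t$ large $P_\infty(\tilde\sigma_t,\tilde P)\circ Q_t$ is invertible and $Q_t\circ(P_\infty(\tilde\sigma_t,\tilde P)\circ Q_t)^{-1}$ is a right inverse of $P_\infty(\tilde\sigma_t,\tilde P)$. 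Hence $P_\infty(\tilde\sigma_t,\tilde P)$ is onto $H^{(k)}_{\tilde P,\bar\sigma_0}$.

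Finally I would invoke the dimension count. By the factorization theorem of Section~\ref{sect3},
\[
  \dim H^{(k)}_{\tilde P,\bar\sigma_0}=\sum_{c\in B_k(\tilde P)}\dim H^0(\bar{N}^c_{\bar\sigma_0},\calL_{c,k}),
\]
and by the Verlinde formula this equals $\dim H^0(N_{\tilde\sigma_0},\calL^k_N)=\dim H^{(k)}_{\tilde\sigma_0}=\dim H^{(k)}_{\tilde\sigma_t}$; for case~(1) with $\tilde P$ a pair of pants decomposition this is precisely the count of Jeffrey and Weitsman \cite{JW}, and the general admissible case reduces to it by completing $\tilde P$ to a pair of pants decomposition. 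A surjection between vector spaces of the same finite dimension is a bijection, so $P_\infty(\tilde\sigma_t,\tilde P)$ is an isomorphism, and composing with $P_t(\tilde\sigma_0,\tilde P)$ shows that $P_\infty(\tilde\sigma_0,\tilde P)$, i.e.\ the map \eqref{eq12}, is an isomorphism. The main obstacle is the dimension equality $\dim H^{(k)}_{\tilde\sigma_0}=\dim H^{(k)}_{\tilde P,\bar\sigma_0}$ in the generality of cases~(2)--(3) and of an arbitrary admissible system $\tilde P$: this is the sewing axiom of the relevant modular functor, and although classical, checking it here amounts to identifying the pieces $\bar{N}^c_{\bar\sigma_0}$, the line bundles $\calL_{c,k}$, and the index set $B_k(\tilde P)$ with the data predicted by the fusion rules. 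An alternative that bypasses the count would be a direct proof of injectivity of $P_\infty(\tilde\sigma_t,\tilde P)$, but the available tool — the Proposition — produces surjectivity, so this dimension input appears to be the crux.
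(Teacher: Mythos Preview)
Your argument is correct and matches the paper's intended approach: the paper states the Corollary immediately after the Proposition without a separate proof, and your surjectivity-plus-dimension-count derivation is precisely how one extracts the isomorphism from the asymptotic statement $P_\infty(\sigma_t,P)(s_{P,\sigma_t})\to s_P$. The dimension equality you flag as the crux is likewise left implicit in the paper, resting on the factorization theorem of Section~\ref{sect3} together with the Verlinde formula (the Jeffrey--Weitsman count in case~(1)).
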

Theorem~\ref{thm11} and this Corollary \ref{Vor} implies Theorem~\ref{thm3}.

\section{The four punctured sphere case}\label{4p}
Suppose $\Sigma$ is a $2$-sphere, and that $R$ consists of four points on $\Sigma$. Let $\tilde \Sigma = \Sigma - R$. Assume that we have a labeling $c : R \to [-2,2]$. Suppose we are given two transverse pair of pants decompositions $P_1$ and $P_2$ of $\tilde \Sigma$. Then $P_i = \{ \gamma_i \}$, where $\gamma_1$ and $\gamma_2$ are two transverse simple closed curves on $\tilde \Sigma$. We will use the notation $h_i = h_{P_i}$.

Choose an ordered subset $R'$ of $R$ of cardinality three. In this case we have the identity
\begin{align*}
	\tilde \calT \isom \bbC - \{0,1\}
\end{align*}
obtained as follows. For each $\tilde \sigma$, there is a unique $z \in \bbC - \{0,1\}$ and a unique biholomorphism from $(\Sigma_{\tilde \sigma},R)$ to $(\bbC P^1,\{0,1,\infty,z\})$ and which maps the ordered set $R'$ to the points $\{0,1,\infty\}$ on $\bbC P^1$.

In the following we determine the moduli space of flat
connections on a four
punctured sphere.

In stead of calculating the moduli spaces purely gauge theoretic we
will make heavy use of the identification of the moduli space of flat
connections on $\tilde\Sigma_{\tilde\s}$ with the character variety
$\calM(\tilde\Sigma_{\tilde\s}) = \Hom(\pi_1(\Sigma_{g,n}),\SU(2))/\SU(2)$. 

  There are many ways of calculating these moduli spaces. We could use
  the Morse theoretic approach as \cite{thaddeus}, or we could
  use pair of pants decomposition of $\tilde\Sigma_{\tilde\s}$ into two
  pair of pants glued along a circle, and calculate the fundamental
  group as an amalgamation of fundamental groups of two fundamental
  groups of a pair of pants. We will however calculate it by
  specifying specific curves, and use them to define coordinates in
  $\calM(\tilde\Sigma_{\tilde\s})$ by using trace. 
  
  Let $A,B,C,D$ be four curves on $\tilde\Sigma_{\tilde\s}$ each of which
  encircles a puncture. Then 
  \[
  \pi_1(\tilde\Sigma_{\tilde\s}) = \bracket{A,B,C,D
    \, | \, ABCD = 1}.
  \]We define seven coordinates on the moduli
  space, each for one of the trace of holonomies around the punctures $a = \Tr(\rho(A))$, $b = \Tr(\rho(B))$, $c = \Tr(\rho(C))$,
  $d = \Tr(\rho(D))$ and one for each of the belts dividing
  $\tilde\Sigma_{\tilde\s}$ into two pair of pants $x = \Tr(\rho(AB))$, $y =
  \Tr(\rho(BC))$ and a last for the diagonal $z = \Tr(\rho(AC))$,
  where $\rho$ is a $\SU(2)$-representation of
  $\pi_1(\tilde\Sigma_{\tilde\s})$. It can be shown (\cite{magnus}) that these
  functions satisfy the equation
  \begin{equation} \label{eq:trace-identity-2}
  x^2 + y^2 +z^2 + xyz = (ab+cd)x + (ad+bc)y + (ac+bd)z -
  (a^2+b^2+c^2+d^2 + abcd - 4).
  \end{equation}

  If the holonomies, $(\rho(A),\rho(B),\rho(C),\rho(D))$, around $A,B,C,D$ are fixed subject to $\rho(ABCD) =
  \Id$, the moduli space $N_{(\rho(A),\rho(B),\rho(C),\rho(D))}(\tilde\Sigma_{\tilde\s})$ is the
  zero-set of the polynomial \eqref{eq:trace-identity-2} in
  $[-2,2]^3$.
  For the permitted $(a,b,c,d) \in (-2,2)^4$ all moduli spaces
  are topologically spheres. In the six boundary cases 
  \[
  (a,b,c,d) \in \{
  (2,2,t,t), (2,t,t,2), (2,t,2,t), (t,t,2,2), (t,2,t,2),
  (t,2,2,t), t \in [-2,2]\},
  \] the moduli spaces are just points -- this
  corresponds to the case where two of the punctures has been filled
  in, and we consider the space of flat connections on a circle with
  specified holonomy $t \in [-2,2]$ --
  which is exactly a point.

\begin{remark}
  \label{rem:1}
  For a more detailed study of the moduli spaces mentioned in the
  above examples see e.g. \cite{Go1}.
\end{remark}

Let us now consider the moduli space of parabolic vector bundles on
$\tilde\Sigma_{\tilde\s}$. By the Mehta--Seshadri Theorem and the
calculations above this moduli space is generically a $2$-sphere.

Let $E \to \tilde\Sigma_{\tilde\s}$ be a stable parabolic vector
bundle of parabolic degree $0$ on $\tilde\Sigma_{\tilde\s}$ and let $L
\subset E$ be a proper subbundle. 

From above we have
\begin{align*}
  \pdeg L &= \deg L + \sum_{p \in R} w_1(p) \\
  &= \deg L + \sum_{\substack{p \in R \\ L_p = E_p^2}}s_p -
  \sum_{\substack{p \in R \\ L_p \neq E_p^2}}s_p \\
 &= \deg L + 2 \sum_{\substack{p \in R \\ L_p = E^2_p}}s_p - \sum_{p
   \in R} s_p
\end{align*}
For $E$ to be parabolically stable $\pdeg L < 0$ so we get the
following bound on the degree of $L$:
\[
\deg L = \pdeg L + \sum_{p \in R} s_p -2\sum_{\substack{p \in R \\ L_p
  = E_p^2}} \leq \sum_{p \in R}s_p.
\]
Since $\deg E = 0$ the Grothendieck classification of vector bundles
on $\bP^1$ give that $E \simeq \cO(k) \oplus \cO(-k)$ for an integer
$k \in \N$.

If $L= \cO(k)$ the restriction on degree gives $k \leq \sum_{p \in R}
s_p$. Now since there are four marked points and each of the $s_p$ are
less than $\frac{1}{2}$ we get that $k < 2$. Thus there are only two options 
\[
E \simeq \cO \oplus \cO \quad \text{or} \quad E \simeq \cO(1) \oplus \cO(-1).
\] 

Having analyzed this moduli space, we now turn to its quantization and the associated Hitchin connection. In particular, we will below identify 
the Hitchin connection explicitly with the TUY connection in the bundle of conformal blocks in this case of a four holed sphere. Hence let us first recall the sheaf of vacua construction from \cite{TUY}. 

Suppose $\lie g$ is a Lie algebra with a invariant inner product , which we will normalize such that
the longest root have length $\sqrt 2$. Let
$$B =  \bC -\{-1,0,1\}$$
and let $C= B \times \P^1$, which the canonical sections $s_i: B \ra C$, $i=1,2,3,4$ determined by 
$$s_1(\tau) = -1, \ s_2(\tau) = 0, \ s_3(\tau) = 1 \text{ and } s_4(\tau) = \tau,$$
for $\tau\in B$.
Let $\calF = (C,B, s_1,s_2,s_3,s_4)$ with the natural formal neighbourhoods induced from the canonical identification $\P^1 = \bC \cup \{\infty\}.$
Let 
\[
\hat {\lie g}(\calF) = \lie g \otimes _\bC H^0(C, \mathcal O_C(*\sum_{j=1}^N x_j))
\]
and recall from \cite{TUY} that the sheaf of conformal blocks over $B$ are given as follows
\[
	\V^\dagger_{\vec \lambda} (\calF )= \{\bra {\Psi}\in \mathcal O_B\otimes \cH^\dagger_{\vec 
	\lambda}\mid \bra {\Psi}\hat {\lie g} (\calF) =0\}
\]
where $\cH_{\lambda_i}$ is the heighest weight integrable $\hat {\lie g}$-module 
and
\[
	\cH^\dagger_{\lambda}= \cH^\dagger_{\lambda_1} \hat \otimes_\bC \dots 
	\hat \otimes_{\bC} \cH^\dagger_{\lambda_N}.
\]
As it is proved in \cite{TUY}, we get that the restriction map from 
$\cH_{\vec \lambda}$ to $\cH^{(0)}_{\vec \lambda}= V_\lambda$ induces an 
embedding of the sheaf of conformal block in genus $0$ into trivial $V_{\vec \lambda}^*$-bundle:
\[
 \V^\dagger_{\vec \lambda}(\calF) \hookrightarrow B \times ( V_{\vec \lambda}^*)^{\lie g}.
\]
Under this identification, the TUY-connection in the sheaf of conformal blocks gets identified with the KZ-connection in $B \times ( V_{\vec \lambda}^*)^{\lie g}$, which we now recall. Let $\Omega_{ij}$ is the quadratic Casimir acting in the $i$'th  and $j$'th 
factor.   Suppose that $(J_1, J_2, J_3)$ is an orthonormal basis of $\lie g$, 
then 
\[
 \Omega = \sum_{i=1}^3 J_i \otimes J_i
\]

So if $\rho_i : \SU(2)\to \Aut(V_{\lambda_i})$ and 
$\dot \rho_i : \lie g \to \End(V_{\lambda_i})$ are the representations of 
$\SU(2)$ and $\lie g$, and we  embed them into $\Aut(V_{\lambda_1}\otimes \dots \otimes V_{\lambda_4})$ and 
$\End(V_{\lambda_1}\otimes \dots \otimes V_{\lambda_4})$ in the usual way, then
\[
\Omega_{ij}= \dot \rho_i\otimes \dot \rho_j(\Omega).
\]
The KZ-connection is then given by
\[
	\nabla^\text{KZ}_{\frac \partial{\partial \tau}}= \nabla^t_{\frac 
	\partial{\partial \tau}} - \alpha(\frac\partial{\partial \tau}).
\]

where
\[
	\alpha(\frac\partial{\partial \tau}) = \frac {\Omega_{41}}{\tau} + 
	\frac{\Omega_{42}}{\tau-1}+ \frac{\Omega_{43}}{\tau+1}.
\]

We will now produce a geometric version of the KZ-connection.

The invariant inner product on the Lie algebra $\lie g$ induces a natural 
symplecitc structure on the coadjoint orbits. Let $\lie h\subseteq \lie g$ 
denote the Cartan subalgebra and denote by $X_\lambda$ the coadjoint 
orbit through $\lambda \in \lie h^*$.  If we use the right normalization of 
the inner product, we have that $X_\lambda$ is 
quantizable if and only if $\lambda$ is in the weight lattice.  Let $G=\SU(2)$ and assume that $\lambda$ is a dominant weight. We get a prequantum line bundle 
$\L_\lambda \to X_\lambda$, and the action of $\SU(2)$ lifts 
to this line bundle. Furthermore there exists a $\SU(2)$-invariant complex 
structure on $X_\lambda$.  It follows from the Bott--Borel--Weil Theorem that the representation of $\SU(2)$ on 
$H^0(X_\lambda, \L_\lambda)$  are the one determined by 
$\lambda$: 
\[
 V_\lambda \cong H^0(X_\lambda, \L_\lambda).
\]
The action of $\lie g$ on $V_\lambda$ can be described explicitly: we have an 
infinitesmal aciton of $\lie g$ on $X_\lambda$ given by
\[
	\lie g \to \mathcal X(X_\lambda) \quad \text{given by} \quad
        \xi \mapsto Z_\xi
\]
We then have that the action of $\lie g$ on $V_\lambda$ is described by
\[
	\xi(s) = \nabla_{x_\xi}s + 2\pi i \mu(\xi) s
\]
where $s\in H^0(X_\lambda, \L_\lambda)$ and $\mu(\xi)$ is the 
moment map evaluated on $\xi$. We remark that the action of $\lie g$ is given 
by  first order differential operators.  

Let us now consider the situation where we have four dominant weights
$\vec \lambda = (\lambda_1, \lambda_2, \lambda_3, \lambda_4)$, and consider 
the exterior tensor product
\[
	\L_{\vec \lambda} = p_1^*(\L_{\lambda_1}) \otimes  p_2^*(\L_{\lambda_2}) \otimes  p_3^*(\L_{\lambda_3}) \otimes  p_4^*(\L_{\lambda_4}) 
\]
which is a line bundle over 
\[
X=X_{\lambda_1}\times X_{\lambda_2}\times X_{\lambda_3}\times X_{\lambda_4}.
\]
  Thus we get a representation of $\SU(2)$  on
\[
 H^0(X,\L_{\vec \lambda}) \cong H^0(X_{\lambda_1}, \mathcal 
 L_{\lambda_1})\otimes H^0(X_{\lambda_2}, \mathcal 
 L_{\lambda_2})\otimes H^0(X_{\lambda_3}, \mathcal 
 L_{\lambda_3})\otimes H^0(X_{\lambda_4}, \mathcal 
 L_{\lambda_4}).
\]
We are interested in the invariant part
\[
	V_{\vec \lambda}^{G} = H^0(X,\L_{\bar \lambda})^{\SU(2)}.
\]
We can provide an alternative description of $V^G_{\vec \lambda}$ by 
applying the idea that quantization commutes with reduction:
Consider the moment map for the diagonal action
\[
	\mu : X_{\lambda_1}\times X_{\lambda_2} \times 
	X_{\lambda 3}\times X_{\lambda_4} \to \lie{g}^*
\]
given by
\[
\mu(\xi_1, \xi_2, \xi_3, \xi_4) = \sum_{i=1}^4 \xi_i.  
\]
Now we consider the symplectic reduction
\[
 \calM = \mu^{-1}(0)/\SU(2),
\]
which have an induced complex structure from $X$.  Furthermore there exists a 
unique line bundle $\L_\calM \to \calM$ s.t.  
\[
 p^*(\L_\calM) \cong \L_{\vec \lambda}|_{\mu^{-1}(0)}
\]
where $p: \mu^{-1}(0)\to \calM$ is the projection map.  

\begin{theorem}[Guillemin \& Sternberg] Quantization commutes with
  reduction, i.e.
	\[
		V^G \cong H^0(\calM,\L_\calM).
	\]
\end{theorem}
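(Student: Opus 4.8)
The plan is to reproduce the K\"ahler-geometric argument of Guillemin and Sternberg, which realises the symplectic reduction $\calM$ as a geometric invariant theory quotient for the complexification $G_\bC = \SL(2,\bC)$ of $G = \SU(2)$. Since $X = X_{\lambda_1}\times\dots\times X_{\lambda_4}$ is compact K\"ahler, the diagonal $\SU(2)$-action is Hamiltonian with moment map $\mu$, $\L_{\vec\lambda}$ is an $\SU(2)$-equivariant prequantum bundle, and $0$ is a regular value of $\mu$, one first records the Kempf--Ness picture: the $G$-action extends to a holomorphic $G_\bC$-action, the semistable locus $X^{ss} := G_\bC\cdot\mu^{-1}(0)$ is a $G_\bC$-invariant open dense subset whose complement is an analytic subvariety, and the inclusion $\mu^{-1}(0)\hookrightarrow X^{ss}$ induces a biholomorphism $\mu^{-1}(0)/G \xrightarrow{\ \sim\ } X^{ss}/G_\bC = \calM$. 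Pulling $\L_\calM$ back along $X^{ss}\to\calM$ and matching it, along the $G_\bC$-orbits, with the given isomorphism $p^*\L_\calM \cong \L_{\vec\lambda}|_{\mu^{-1}(0)}$, one obtains a $G_\bC$-equivariant identification of $\L_{\vec\lambda}|_{X^{ss}}$ with the pullback of $\L_\calM$, hence
\[
	H^0(\calM,\L_\calM)\;\cong\;H^0(X^{ss},\L_{\vec\lambda})^{G_\bC}.
\]
On the other hand, an $\SU(2)$-invariant holomorphic section of $\L_{\vec\lambda}$ over $X$ is automatically $G_\bC$-invariant: $g\mapsto g\cdot s$ is a holomorphic map $G_\bC\to H^0(X,\L_{\vec\lambda})$ (a finite-dimensional space) which is constant on the totally real, Zariski-dense subgroup $G$, hence constant. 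Thus $V^G = H^0(X,\L_{\vec\lambda})^{\SU(2)} = H^0(X,\L_{\vec\lambda})^{G_\bC}$, and restriction to $X^{ss}$ gives a linear map $V^G\to H^0(\calM,\L_\calM)$, which is injective because a $G_\bC$-invariant section vanishing on the dense set $X^{ss}$ vanishes identically.

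\textbf{Surjectivity} is the substantive point and the one place the moment map is genuinely used. Fix an $\SU(2)$-invariant Hermitian metric on $\L_{\vec\lambda}$; it descends to a metric on $\L_\calM$. Given $\tau\in H^0(\calM,\L_\calM)$, pull it back to a $G_\bC$-invariant holomorphic section $s$ of $\L_{\vec\lambda}$ over $X^{ss}$; the task is to extend $s$ over all of $X$. The Kempf--Ness lemma asserts that for every $\xi\in\frakg$ and $x\in X$ the function $t\mapsto\log\|s(\exp(it\xi)\cdot x)\|^2$ is concave with derivative at $t=0$ vanishing exactly when $\langle\mu(x),\xi\rangle = 0$; consequently $\log\|s\|^2$ restricted to any $G_\bC$-orbit attains its maximum precisely along the $G$-orbits in $\mu^{-1}(0)$ contained in that orbit. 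Since every orbit in $X^{ss}$ meets $\mu^{-1}(0)$ and $\|s\|$ is $G$-invariant, this gives
\[
	\sup_{X^{ss}}\|s\|^2\;=\;\sup_{\mu^{-1}(0)}\|s\|^2\;=\;\max_{\calM}\|\tau\|^2\;<\;\infty,
\]
the finiteness because $\calM$ is compact. Thus $s$ is a bounded holomorphic section of $\L_{\vec\lambda}$ on the complement of the analytic set $X\setminus X^{ss}$, and the Riemann extension (removable singularity) theorem produces a holomorphic extension over $X$, which is $\SU(2)$-invariant by continuity and restricts to $\tau$. The main obstacle is exactly this step: one must establish the Kempf--Ness convexity estimate to control $\|s\|$ near the unstable locus, after which the extension theorem closes the argument.

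\textbf{Remarks.} The argument uses only that $X$ is a compact K\"ahler Hamiltonian $\SU(2)$-manifold with a prequantum line bundle and that $0$ is a regular value of $\mu$; all of this holds here, since each $X_{\lambda_i}$ is a coadjoint orbit with its $\SU(2)$-invariant K\"ahler structure and $\L_{\vec\lambda}$ is the exterior product of the prequantum bundles $\L_{\lambda_i}$, and for the weights relevant to the four-punctured sphere $\calM$ is smooth. For $\SU(2)$ the equality of dimensions can also be checked directly by Clebsch--Gordan decomposition, but the geometric argument above is the one that localises and generalises.
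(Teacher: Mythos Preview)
The paper does not prove this theorem; it is stated as a known result of Guillemin and Sternberg and is simply invoked, so there is no ``paper's own proof'' to compare against. Your write-up is a faithful and correct outline of the original Guillemin--Sternberg argument: identify $\calM$ with the GIT quotient $X^{ss}/G_\bC$ via Kempf--Ness, observe that $\SU(2)$-invariance of a holomorphic section implies $\SL(2,\bC)$-invariance, and for surjectivity use the concavity of $t\mapsto\log\|s(\exp(it\xi)\cdot x)\|^2$ for a $G_\bC$-invariant section to bound $\|s\|$ on $X^{ss}$ by its supremum on $\mu^{-1}(0)$, then apply Riemann extension across the unstable locus. The only point worth flagging is that you should be explicit that the complement $X\setminus X^{ss}$ has complex codimension at least one (indeed it is a proper analytic subset of a compact K\"ahler manifold), since that is what the extension theorem requires; otherwise the sketch is complete and accurate for the setting at hand.
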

Now we consider the genus $0$ surface $\Sigma$ with $4$ marked points 
$x_1, \dots , x_4$.   We assume that we are provided with an identification 
$\Sigma \cong \P^1$, s.t.   $(x_1,x_2,x_3)$ are mapped to 
$(-1,0,1)$ and $x_4 $ to $\tau \in \P^1 - \{-1,0,1,\infty\}$. We 
assume that we have dominant weights $\lambda_1, \dots, \lambda_4$ attached to 
$x_1, \dots, x_4$.   The KZ-connection is defined as a connection in the 
trivial bundle
\[
	V_{\vec \lambda}^G = (V_{\lambda_1}\otimes \dots \otimes V_{\lambda_4})^G.  
\]
As stated above, the KZ-connection is described by the specific 1-form:
\[
	\nabla^\text{KZ}_{\frac \partial{\partial \tau}}= \nabla^t_{\frac 
	\partial{\partial \tau}} - \alpha(\frac\partial{\partial \tau})
\]
where
\[
	\alpha(\frac\partial{\partial \tau}) = \frac {\Omega_{41}}{\tau} + 
	\frac{\Omega_{42}}{\tau-1}+ \frac{\Omega_{43}}{\tau+1}
\]
From this we see that each of the operators $\Omega_{ij}$ become second order 
differential operators on $X$ acting on $\L_{\vec \lambda}$ such that 
they globally preserve $V_{\vec \lambda}^G$.   Let 
$u^\text{KZ}=u^{\text{KZ}}(\frac \partial {\partial \tau})$.   We now describe 
the resulting connection $\hat \nabla$  acting on the trivial 
$H^0(\calM,\L_\calM)$-bundle over $\P^1 - \{0,1,\infty\}$:
\[
\hat \nabla = \nabla^t-\hat u
\]
where $\hat u$ is a 1-form on $\P^1 - \{0,1,\infty\}$  with values 
in differential operators on $\calM$ acting on $\L_\calM$.   Explicitly we get 
a formula for $\hat u (\frac \partial{\partial z})$ by considering
\[
	X \supset \mu^{-1}(0) \to \calM
\]
and the splitting:
\[
	T_x\mu^{-1}(0) = T_x(Gx)\oplus (T_x(Gx))^\perp\cong T_x(Gx)\oplus p^*(T_x\calM).  
\]
of the tangent space of $\mu^{-1}(0)$ into a 3-dimensional and a 2-dimensional 
subspace.   Furthermore, we have that
\[
	T_x X = I(T_x(Gx))\oplus T_x\mu^{-1}(0)
\]
where $I$ is the complex structure on $X$.   On $G$-invariant section of 
$\L_{\vec \lambda}$ which are also holomorphic, i.e. $V^G_{\vec \lambda}$, 
we see that the derivatives in the direction of $T_x(Gx)$ and $I(T_x(Gx))$ 
vanishes, hence we can rewrite the action of $u^\text{KZ}$ as a second order 
differential operator which only differentiates in the direction of 
$(T(Gx))^\perp$.   Since we have $G$-invariance, we get this way an expression 
for $\hat u(\frac \partial{\partial z})$ as a second order differential operator.  

\begin{prop} The symbol of the second order differential operator
  $\hat{u}(\frac{\partial}{\partial z})$ is holomorphic, i.e.
	\[
		\sigma(\hat u(\frac\partial{\partial \tau} ))\in H^0(\calM,S^2(T))
	\]
\end{prop}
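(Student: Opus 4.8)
The plan is to compute the principal symbol of $u^{\mathrm{KZ}}$ upstairs on $X$, to observe that on holomorphic sections it is a holomorphic section of $S^2(T^{1,0}X)$ because $\SU(2)$ acts on each coadjoint orbit $X_{\lambda_i}$ by biholomorphisms, and then to transport this holomorphicity through the reduction $p\colon\mu^{-1}(0)\to\calM$.

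First I would recall that the $\lie g$-action on $V_{\lambda_i}=H^0(X_{\lambda_i},\L_{\lambda_i})$ preserves holomorphic sections and is given by the first order operators $\dot\rho_i(\xi)s=\nabla_{Z^{(i)}_\xi}s+2\pi i\,\mu_i(\xi)s$; on a holomorphic $s$ this equals $\nabla_{(Z^{(i)}_\xi)^{1,0}}s+2\pi i\,\mu_i(\xi)s$ since $\nabla^{0,1}s=0$. Because $\Omega_{4j}=\sum_a\dot\rho_4(J_a)\dot\rho_j(J_a)$ is a composition of two operators acting in different tensor factors, these commute, and so on holomorphic sections $\Omega_{4j}$ agrees, modulo terms of order $\le 1$, with $\sum_a\nabla_{(Z^{(4)}_{J_a})^{1,0}}\nabla_{(Z^{(j)}_{J_a})^{1,0}}$, whose principal symbol is $\sum_a (Z^{(4)}_{J_a})^{1,0}\odot(Z^{(j)}_{J_a})^{1,0}$. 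The crucial input is the elementary fact that the infinitesimal generators of a holomorphic group action are real vector fields preserving the complex structure, equivalently their $(1,0)$-parts are holomorphic vector fields; since each coadjoint orbit $X_{\lambda_i}$ is either a point or $\cong\bP^1$ and carries a holomorphic $\SU(2)$-action, each $(Z^{(i)}_{J_a})^{1,0}$ is holomorphic, hence so is any symmetric product of two such fields. It follows that the symbol of $u^{\mathrm{KZ}}=\frac{1}{\tau}\Omega_{41}+\frac{1}{\tau-1}\Omega_{42}+\frac{1}{\tau+1}\Omega_{43}$, acting on holomorphic sections, is for each fixed $\tau$ a holomorphic section of $S^2(T^{1,0}X)$, and it is $\SU(2)$-invariant because $\Omega$ is $\Ad$-invariant.

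Next I would descend to $\calM=\mu^{-1}(0)/\SU(2)$, which on the free locus is the GIT quotient $X^{\mathrm{ss}}/\!/\SL(2,\bC)$. There the holomorphic, $\bC$-linear-in-$\xi$ assignment $\xi\mapsto Z^{1,0}_\xi$ exhibits $X\times\lie g_\bC$ as a holomorphic subbundle $\lie g^{1,0}\subset T^{1,0}X$ — the directions tangent to $\SL(2,\bC)$-orbits — and the holomorphic quotient $T^{1,0}X/\lie g^{1,0}$ descends to $T^{1,0}\calM$; this is exactly what the identifications $(T_x(Gx))^\perp\cong p^*T_x\calM$ and $T_xX=I(T_x(Gx))\oplus T_x\mu^{-1}(0)$ used above encode. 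A holomorphic, $\SU(2)$-invariant section of the $\SL(2,\bC)$-equivariant bundle $S^2(T^{1,0}X)$ is automatically $\SL(2,\bC)$-invariant, since the invariance condition (vanishing of the equivariant derivative along $Z_\xi$) is holomorphic in $\xi$ and $\xi\mapsto Z^{1,0}_\xi$ is $\bC$-linear; hence the symbol of $u^{\mathrm{KZ}}$ pushes down to a holomorphic section of $S^2(T^{1,0}\calM)=S^2(T)$ on $\calM$. It remains to check that this pushforward is the principal symbol of $\hat u(\frac{\partial}{\partial\tau})$. Unravelling the construction of $\hat u$ — which rewrites $u^{\mathrm{KZ}}$ on $\SU(2)$-invariant holomorphic sections using that differentiation along $Z_\xi$ produces only $-2\pi i\,\mu(\xi)$, vanishing on $\mu^{-1}(0)$, while the $I$-orbit directions drop out of the $(1,0)$-action — one pairs $\sigma(\hat u)$ with $d\tilde f$ for $\tilde f=p^*f$ a lifted local holomorphic function on $\calM$ (hence $\SL(2,\bC)$-invariant, so $d\tilde f$ annihilates $\lie g^{1,0}$), obtaining $\sigma(\hat u)(df,df)=\sigma(u^{\mathrm{KZ}})(d\tilde f,d\tilde f)$, the value of the descended holomorphic section. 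This yields $\sigma(\hat u(\frac{\partial}{\partial\tau}))\in H^0(\calM,S^2(T))$.

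The step I expect to be the main obstacle is the last one, namely identifying the principal symbol of the reduced operator $\hat u$ with the horizontal projection of the symbol of the lifted operator $u^{\mathrm{KZ}}$. Since the horizontal subspace $(T_x(Gx))^\perp$ is not $I$-invariant, one cannot use a naive complex complement and must instead work with the holomorphic quotient $T^{1,0}X/\lie g^{1,0}$; and one must verify that the many lower-order terms — generated when commuting $\nabla$ past the moment-map functions $\mu(\xi)$, and when expanding the operator in a frame not adapted to the three-way splitting $T_xX=I(T_x(Gx))\oplus T_x(Gx)\oplus(T_x(Gx))^\perp$ — contribute nothing to the principal symbol downstairs. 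The remaining ingredients (holomorphicity of the infinitesimal generators, propagation of invariance from $\SU(2)$ to $\SL(2,\bC)$, and the identification $\calM\cong X^{\mathrm{ss}}/\!/\SL(2,\bC)$ as in Guillemin--Sternberg / Kempf--Ness) are standard.
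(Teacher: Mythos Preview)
Your proposal is correct and follows the same approach as the paper: establish that the symbol of $u^{\mathrm{KZ}}$ lies in $H^0(X,S^2(T))$ upstairs, then descend by reduction to $\calM$. The paper's own proof is a two-line sketch (also noting $S^2(T)\cong\mathcal{O}(4)$ on $\calM\cong\bP^1$, which plays no logical role), so your write-up supplies considerably more detail than the paper itself---in particular your careful treatment of the reduction step is exactly what the paper compresses into the phrase ``by reduction''.
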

\begin{proof}
We observe that
\[
	S^2(T) \cong \mathcal O(4)
\]
under the identification of $\calM\cong \P^1$.   Next we observe that 
$u^{\text{KZ}}\in H^0(X,S^2(T))$ which then gives the stated result by reduction.
\end{proof}

We now compare this geometric version of the KZ-connection with the Hitchin connection.
Since $(\calM,\omega,I)$ is isomorphic to $\P^1$ as a complex manifold, we know 
there exists a smooth family of complex isomorphisms
\[
	\Phi_\tau :(\calM,I)\to \P^1
\]
varying smoothly with $\tau \in\T$. By 
comparing Chern-classes, we see that 
\[
	\Phi_\tau^*(\L_\calM) \cong \mathcal O(k_\calM)
\]
as holomorphic line bundls, for some $k_\calM \in \bZ$  independent of 
$\tau\in\T$.   From this we also get
\[
	G_\tau = \Phi^*_\tau\left(G\left(\frac \partial{\partial 
	\tau}\right)_\tau \right)\in H^0(\P^1, S^2(T \P^1))\cong H^0(\P^1,
	\mathcal O(4)).
\]
We observe that
\[
	S_0^2(H^0(\P^1,\mathcal O(2)))\cong H^0(\P^1,\mathcal O(4))
\]
as representations of $\SL(2,\bC)$.   Here we think of 
$S^2(H^0(\P^1,\mathcal O(2)))$ as quadratic forms on $H^0(\P^1, \mathcal O(2))$ 
and $S^2_0$ mean trace zero such.  

\begin{theorem}
\label{}
There exists
\[
	\Psi : \T \to \SL(2,\bC),
\]
such that if we define $\tilde \Phi_\tau = \Psi^{(\tau)}\circ
\Phi_\tau$ and let
\[
\tilde G_\tau = \tilde \Phi^*_\tau (G(\frac \partial{\partial \tau} )_\tau)\in 
H^0(\P^1,\mathcal O(4))
\]
then
\[
	\tilde G_{\tau}=\sigma(\hat \mu(\frac\partial{\partial \tau})).
\]
\end{theorem}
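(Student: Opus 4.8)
The plan is to reduce the identity to a comparison of two explicit binary quartics on $\P^1$, using that $\Aut(\P^1)=\PSL(2,\bC)$ acts on quartics with a transitivity governed only by the configuration of zeros. Both $G_\tau=\Phi_\tau^*(G(\tfrac\partial{\partial\tau})_\tau)$ and $\sigma(\hat u(\tfrac\partial{\partial\tau}))$ are holomorphic sections of $S^2(T\P^1)\cong\cO(4)$: the former because the family $I$ is rigid (Definition~\ref{Ridig}) together with the biholomorphism $\Phi_\tau$, the latter by the Proposition just proved. Replacing $\Phi_\tau$ by $\tilde\Phi_\tau=\Psi(\tau)\circ\Phi_\tau$ replaces $G_\tau$ by the image of $G_\tau$ under the natural $\SL(2,\bC)$-action on $\cO(4)$ — the Jacobian factors are built into the identification $\cO(4)\cong S^2(T\P^1)$, so no spurious scalar is introduced, and under $S^2_0(H^0(\P^1,\cO(2)))\cong H^0(\P^1,\cO(4))$ this is just the action on trace-free quadratic forms. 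So it suffices to produce a smooth $\Psi\colon\T\to\SL(2,\bC)$ with $\Psi(\tau)\cdot G_\tau=\sigma(\hat u(\tfrac\partial{\partial\tau}))$ for all $\tau$.

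First I would pin down $\sigma(\hat u)$ explicitly. From $u^{\text{KZ}}(\tfrac\partial{\partial\tau})=\Omega_{41}/\tau+\Omega_{42}/(\tau-1)+\Omega_{43}/(\tau+1)$ and the fact that each $\dot\rho_i(\xi)$ acts on $\L_{\vec\lambda}$ by the first-order operator $\nabla_{Z_\xi}+2\pi i\mu(\xi)$, the symbol of $\Omega_{ij}=\sum_a\dot\rho_i(J_a)\otimes\dot\rho_j(J_a)$ on $X=X_{\lambda_1}\times\cdots\times X_{\lambda_4}$ is the coupling bivector $\sum_a Z^{(i)}_{J_a}\otimes Z^{(j)}_{J_a}\in H^0(X,S^2(TX))$. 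Pushing this through the symplectic reduction $X\supset\mu^{-1}(0)\to\calM$ with the splittings $T_x\mu^{-1}(0)=T_x(Gx)\oplus p^*(T\calM)$ and $T_xX=I(T_x(Gx))\oplus T_x\mu^{-1}(0)$ used above, one gets $\sigma(\bar\Omega_{4j})\in H^0(\calM,S^2(T\calM))=H^0(\P^1,\cO(4))$; a finite linear-algebra computation on $(\P^1)^4$ then locates its divisor of zeros at the two fixed points of the circle acting diagonally on the $j$-th and $4$-th factors, which under Mehta--Seshadri are the $S$-equivalence classes of the split parabolic bundles isolated in the discussion above. Summing over $j$, $\sigma(\hat u)$ is a quartic whose $\Aut(\P^1)$-orbit I can read off; I expect its four zeros to be exactly the images of $-1,0,1,\tau$ under the Narasimhan--Seshadri identification, so that the cross-ratio of its zeros is that of $(-1,0,1,\tau)$.

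Next I would identify $G_\tau$. Using the description of $\calM$ as the moduli space of rank-$2$ parabolic bundles on $(\P^1;-1,0,1,\tau)$, the trace coordinates, and the defining relation \eqref{eq:trace-identity-2}, I would show that $G(\tfrac\partial{\partial\tau})_\tau$ represents the Kodaira--Spencer class of the deformation ``move the fourth marked point to $\tau$'', and compute that its zero divisor is, again up to an element of $\Aut(\P^1)$, the configuration $\{-1,0,1,\tau\}$ — with the same cross-ratio as for $\sigma(\hat u)$. Since a holomorphic section of $S^2(T\P^1)$ representing a given Kähler deformation is determined up to the $\Aut(\P^1)$ used to present it, $G_\tau$ and $\sigma(\hat u(\tfrac\partial{\partial\tau}))$ lie in the same $\PSL(2,\bC)$-orbit; let $\Psi(\tau)$ be the element carrying one to the other. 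It is determined up to the finite stabilizer of the four-point configuration, and a smooth (indeed holomorphic) representative exists — most cleanly by using the preferred biholomorphism $\calM\cong\P^1$ furnished by the linear system of $\L_\calM$ (or the associated flag), which rigidifies the choice — and since $\T\cong\bC\setminus\{-1,0,1\}$ is homotopy equivalent to a bouquet of circles, $H^2(\T;\bZ/2)=0$, so this representative lifts to a smooth map $\Psi\colon\T\to\SL(2,\bC)$. After the substitution, $\tilde G_\tau$ and $\sigma(\hat u(\tfrac\partial{\partial\tau}))$ are holomorphic sections of $S^2(T\P^1)$ with the same divisor, hence $\tilde G_\tau=c(\tau)\,\sigma(\hat u(\tfrac\partial{\partial\tau}))$; the scalar $c(\tau)\equiv1$ follows by tracking through both computations the single common normalization (the reduced symplectic form $\omega$ on $\calM$ and the invariant form on $\lie g$ with the longest root of length $\sqrt2$) — equivalently, by noting that the symbol-level consequence of \eqref{eqcond} for the complex structure $I_\tau$ determines $G$ from $V[I]$ including its scale.

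The main obstacle is the computation of $G_\tau$: one must control the differential of the (Mehta--)Seshadri correspondence precisely enough to locate the four zeros of the Hitchin symbol and to verify $c(\tau)\equiv1$, i.e.\ to match ``moving the marked point $\tau$'' with ``varying the Kähler structure on $(\calM,\omega)$''. The reduction computation for $\sigma(\bar\Omega_{4j})$ is delicate but ultimately finite-dimensional linear algebra on $(\P^1)^4$; the genuinely analytic input is the translation between the marked-point deformation and the Kähler deformation, for which the trace coordinates and \eqref{eq:trace-identity-2} are the natural tool, and ensuring that the resulting $\Psi$ is globally smooth (not merely defined up to the configuration's stabilizer) is the remaining technical point.
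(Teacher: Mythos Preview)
Your approach via the zero divisors of the two binary quartics is genuinely different from the paper's. The paper exploits the identification $H^0(\P^1,\cO(4))\cong S^2_0(H^0(\P^1,\cO(2)))$ to regard both $G_\tau$ and $\sigma(\hat u(\tfrac\partial{\partial\tau}))$ as traceless symmetric $3\times 3$ complex matrices, on which $\SL(2,\bC)$ acts by conjugation through the adjoint map $\SL(2,\bC)\to\SO(3,\bC)$; since two such (generic) matrices are conjugate iff they have the same eigenvalues, a single explicit eigenvalue comparison suffices, and it settles both the scale and the $\PSL$-versus-$\SL$ issue in one stroke. Your cross-ratio route only detects the $\GL(2,\bC)$-orbit, so you are forced into the separate $c(\tau)\equiv 1$ step; this is the weakest point of your proposal --- what is needed is not that each side is canonically normalized, but that the two normalizations agree \emph{after} the particular M\"obius transformation $\Psi(\tau)$ you have chosen, and the Klein four-group stabilizer of the four zeros means that choice is not unique. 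Your approach would buy a more geometric picture (identifying the zeros with the marked points, if that expectation pans out), but at the cost of this extra scale analysis and the topological lifting argument; the paper's eigenvalue invariant packages shape and scale together and is correspondingly shorter.
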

\begin{proof}
	We consider $S_0^2(H^0(\P^1,\mathcal O(2)))$ as a representation of 
	$\SL(2,\bC)$, where we think of $S_0^2(H^0(\P^1,\mathcal O(2)))$ as 
	quadratic forms on $H^0(\P^1,\mathcal O(2))=H^0(\P^1,T \P^1)$, hence 
	we consider elements of $S_0^2(H^0(\P^1,\mathcal O(2)))$ as symmetric 
	symmetric traceless $3\times 3$ complex matrices on which $\SL(2,\bC)$ 
	acts by conjugation.   We have that two symmetric traceless $3\times 3$ 
	complex matrices are conjugate if and only if they have the same eigenvalues.   
	An explicit computation shows that $\tilde G_\tau$ and 
	$\sigma(\hat u(\frac\partial{\partial \tau} ))$ has the same 
	eigenvalues, hence we can find the required map $\Psi$.  
\end{proof}

Since $\tilde \Phi$ is such that the two symbols of the two second order differential operators defining the Hitchin connection and the geometric KZ-connection have been aligned, it follows from the form the Hitchin connection has, in order to preserve the subbundle of holomorphic sections that $\tilde \Phi$ must take the Hitchin connection to the KZ-connection. We further see that the Bohr-Sommerfeld decomposition corresponding to the limiting real polarizations, when $\tau$ approaches $-1$ and $1$, corresponds to the factorization decomposition for the covariant constant sections of the sheaf of vacua constructed in \cite{TUY}.
\begin{thm}
  \label{thm12}
  If $P_1$ and $P_2$ are pair of pants decompositions related by an elementary flip on a four-punctured sphere, then $[\cdot,\cdot]_{P_1,\sigma_0}$ and $[\cdot,\cdot]_{P_1,\sigma_0}$ are projectively equivalent.
\end{thm}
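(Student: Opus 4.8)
The plan is to reduce the assertion to the (projective) unitarity of the fusion matrix of the $\SU(2)$ level $k$ modular functor, using the identification of the Hitchin connection with the KZ connection established just above. Via that identification $H^{(k)}_{\sigma_0}$ is carried to the space of conformal blocks of the four-punctured sphere, and via Corollary~\ref{Vor} together with the state asymptotics established just before it, the limiting map $P_\infty(\sigma_0,P_i)$ is carried to the factorization isomorphism attached to the degeneration of $\tilde\calT\cong\bbC-\{0,1\}$ corresponding to $P_i$. Under this correspondence $(\cdot,\cdot)^{(k)}_{P_i}$ becomes, by its very definition, the Hermitian structure making $\{v_l\}_{l\in L_k(P_i)}$ orthogonal with the norms \eqref{eq2}, and $[\cdot,\cdot]_{P_i,\sigma_0}=P_\infty(\sigma_0,P_i)^{*}(\cdot,\cdot)^{(k)}_{P_i}$ becomes the pull-back of that structure. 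Thus the theorem is equivalent to the statement that the transition $P_\infty(\sigma_0,P_2)\circ P_\infty(\sigma_0,P_1)^{-1}$ between the two factorization channels is a projective isometry for the structures \eqref{eq2}.

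Now this transition transformation is, up to a diagonal normalization, exactly the fusion ($6j$) matrix of the $\SU(2)$ level $k$ modular category: this is the classical description of the monodromy of the four-point KZ equation connecting the canonical factorization solutions at two of its singular points (see \cite{TUY}, and \cite{RT1}, \cite{RT2}). With respect to the Hermitian structure of Theorem~4.11 of \cite{BHMV1}, i.e. with respect to the norms \eqref{eq2}, this fusion matrix is unitary up to an overall scalar --- equivalently, the quantum $6j$-symbols satisfy the usual orthogonality relation and the $\SU(2)$ modular functor is unitary --- and for this I would cite \cite{BHMV1}, \cite{BHMV2}, \cite{RT1}, \cite{RT2}, \cite{T}. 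Hence the two orthonormal bases $P_\infty(\sigma_0,P_i)^{-1}(\tilde v_l)$ of $H^{(k)}_{\sigma_0}$ are related by a unitary matrix up to a scalar, so $[\cdot,\cdot]_{P_1,\sigma_0}$ and $[\cdot,\cdot]_{P_2,\sigma_0}$ coincide up to a positive scalar. (It is also worth noting that, to the extent the limiting maps $P_\infty(\sigma,P_i)$ intertwine Hitchin parallel transport --- which one checks as for the analogous compatibilities in \cite{A2}, \cite{A3} --- each $[\cdot,\cdot]_{P_i,\sigma}$ is parallel on $H^{(k)}\to\tilde\calT$, so the positive operator interpolating the two structures is a parallel section of $\End(H^{(k)})$ with constant spectrum; it then suffices to carry out the verification above for $\sigma$ in a punctured neighbourhood of a single cusp.)

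The step that needs genuine care is the normalization bookkeeping behind the first paragraph: one must confirm that, under the aligning isomorphism $\tilde\Phi$, the map $P_\infty(\sigma_0,P_i)$ is projectively the canonical factorization solution of the KZ equation at the appropriate cusp, and that the geometric normalization built into $(\cdot,\cdot)^{(k)}_{P_i}$ --- that $\tilde v_l$ is the covariant constant section of unit norm over the Bohr--Sommerfeld leaf $l$ --- agrees, up to a unimodular constant on each leaf, with the algebraically normalized factorization vector carrying the norm \eqref{eq2}. The explicit limiting fibre operator $\tilde u_{c,\infty}=\Delta_{G_\infty}$ of Theorem~\ref{prop1} and the state asymptotics cited above are the tools for this. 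Since a Hermitian structure is unchanged by rescaling an orthogonal basis by unimodular constants, and an overall positive scalar is irrelevant for projective equivalence, the matching is only needed modulo these harmless ambiguities; once it is in place, the unitarity of fusion finishes the proof. As a safeguard --- and this is entirely feasible for the four-punctured sphere --- one could instead avoid all of these references and compute the transition matrix and the two Hermitian structures directly in the explicit model $\calM\cong\P^1$, $H^0(\calM,\calL_\calM)\cong H^0(\P^1,\mathcal{O}(k_\calM))$, recognizing the outcome as the unitary $\SU(2)_k$ fusion matrix by hand.
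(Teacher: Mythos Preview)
Your approach is essentially the same as the paper's: both identify the Hitchin parallel transport with the KZ/TUY connection (as established immediately before the theorem) and then invoke the unitarity of the flip in the underlying modular functor with respect to the Hermitian structure \eqref{eq2}. The paper cites \cite{AU1,AU2,AU3,AU4} for the identification of the geometric construction with the Reshetikhin--Turaev TQFT and then uses that the flip is an isometry there; you instead cite the unitarity of the $6j$-symbols directly from \cite{BHMV1,BHMV2,RT1,RT2,T}. Your version is more explicit about the normalization bookkeeping that needs to be checked, which the paper leaves implicit.

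One point to note: the paper's proof also contains the phrase ``tensor product of the parallel transport on the four-punctured sphere with the identity on the complementary part in the factorization,'' indicating that the statement is meant to cover pants decompositions of a general surface differing by a flip supported on a four-holed sphere piece, not just the four-punctured sphere in isolation. Your argument implicitly handles this via factorization, but you should make explicit that the two Hermitian structures $[\cdot,\cdot]_{P_i,\sigma_0}$ factor compatibly over the common part of $P_1$ and $P_2$, so that the comparison reduces to the four-punctured sphere block.
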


\proof
The projective equivalence is obtained by the tensor product of the parallel transport discussed above on the four-punctured sphere in question with the identity on the complementary part in the factorization. The fact that this is a projective equivalence follows from the above arguments identifying the parallel transport of the Hitchin connection with the KZ-connection, which by the results of \cite{AU1,AU2,AU3,AU4} know is an isometry, since the corresponding flip transformation in the Reshetikhin-Turaev TQFT is an isometry.

\eproof

\section{The once punctured genus one case}\label{elliptic}


Consider the specific case of a torus with a single puncture,
$\tilde\Sigma_{\tilde\s}$ (in the notation above). Let $N_{c_0}$ be the
moduli space of flat connections on $\tilde\Sigma_{\tilde\s}$ with $c_0
\in [-2,2]$ the holonomy around the puncture. The generators of the fundamental group are the curves $a,b,c$ being the longitude, meridian and a small curve around the puncture. The fundamental group of $\tilde\Sigma_{\tilde\s}$ is $\pi_1(\tilde\Sigma_{\tilde\s}) = \bracket{a,b,c \, | \, aba^{-1}b^{-1} = c}$. 

Let $\rho: \pi_1(\tilde\Sigma_{\tilde\s}) \to \SU(2)$ be a
$\SU(2)$-representation of $\pi_1(\tilde\Sigma_{\tilde\s})$. Define $A =
\rho(a)$, $B = \rho(b)$ and $C = \rho(c)$. We describe the moduli space by determining each of the fibers of the trace $\Tr: N \to [-2,2]$.

The case where $C$ corresponds to minus the identity (i.e. $\Tr(C)
= -2$) is the same as removing the puncture. Now since $a,b$ commute in
$\pi_1(\tilde\Sigma_{\tilde\s})$ we have $AB = BA$. Every element of
$\SU(2)$ can be diagonalized, so as $\SU(2)$ acts on the
representation variety by diagonal conjugation we assume $A$ to
be diagonal. Assume also that $A$ has distinct eigenvalues. Then
the only element $B$ that commutes with $A$ are diagonal
matrices. Hence $A$ and $B$ can be simultaneously
diagonalised to be elements of $S^1$. We can however still conjugate
$A$ and $B$ by elements of the Weyl group and still stay
within $S^1 \subset \SU(2)$ (this amounts to changing the order of the
eigenvalues), so $N_1(\tilde\Sigma_{\tilde\s}) = S^1 \times S^1 / \Z_2$. In
the case of $A$ or $B$ not having two distinct eigenvalues the above
description is still valid; generally however these non-generic cases
correspond to singular points of the moduli space.

Let $a,b,c$ be curves as above. The trace provides coordinates on the
moduli space, so let $\rho$ be a $\SU(2)$-representation of the
fundamental group, and define $x = \Tr(\rho(a))$, $y = \Tr(\rho(b))$
and $z = \Tr(\rho(ab))$. The moduli space is a subset of $[-2,2]^3$
carved out by the relation from the presentation of the fundamental group. Now fix the holonomy around $c$ to be $C \in
\SU(2)$. By the relation $ABA^{-1}B^{-1} = C$, and it is a simple
check that the following identity is satisfied for any $A,B \in \SU(2)$:
\begin{equation} \label{eq:trace-identity}
\Tr(ABA^{-1}B^{-1}) = \Tr(A)^2 + \Tr(B)^2 + \Tr(AB)^2 - \Tr(A)\Tr(B)\Tr(AB)-2.
\end{equation}
In other words the moduli space with fixed holonomy around $c$ is
\[
N_{c_0}(\tilde\Sigma_{\tilde\s}) = \{(x,y,z) \in [-2,2]^3 \, | \, x^2 + y^2 + z^2
- xyz - 2 = c_0\},
\]
which is topologically a sphere, for all values of $c_0 \in (-2,2]$.

We expect that we can find an argument completely parallel to the one given above in the genus zero case, since the moduli space is again a sphere. However we do not strictly need this, since by \cite{AU3}, we know that the genus zero part of a modular functor determines $S$-matrix, which is the need equivalence in this case. By the result of the previous section, we know that the quantization of the moduli spaces of does indeed give a modular functor which in genus zero is isomorphic to the one constructed in \cite{AU2} for the Lie algebra of $SU(2)$. Hence we have the following theorem.

\begin{thm}
  \label{thm13}
  If $P_1$  and $P_2$ are pair of pants decompositions related by an elementary flip on a once punctured torus, then $[\cdot,\cdot]_{P_1,\sigma_0}$ and $[\cdot,\cdot]_{P_2,\sigma_0}$ are projectively equivalent.
\end{thm}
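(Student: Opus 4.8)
The plan is to reduce the statement to the corresponding fact in the Reshetikhin--Turaev modular functor, in exactly the same spirit as the proof of Theorem~\ref{thm12}, using that the geometric quantizations of the moduli spaces $N$ assemble into a modular functor whose genus zero part has already been identified with the $\SU(2)$ modular functor of \cite{AU2}. First I would recall that an elementary flip on a once punctured torus is, up to the mapping class group action, the $S$-transformation: the two pair of pants decompositions $P_1$ and $P_2$ are related by the modular transformation $\tau\mapsto -1/\tau$, and the change of basis between the Bohr--Sommerfeld bases $\{\tilde v_l(P_1)\}$ and $\{\tilde v_l(P_2)\}$ of $H^{(k)}_{\sigma_0}$ is the $S$-matrix of the associated modular functor. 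By Theorem~\ref{thm11} and Corollary~\ref{Vor} the limiting maps $P_\infty(\sigma_0,P_i)$ are isomorphisms, so $[\cdot,\cdot]_{P_i,\sigma_0}$ is identified, through $P_\infty(\sigma_0,P_i)$, with $(\cdot,\cdot)^{(k)}_{P_i}$ on $H^{(k)}_{P_i}$; under $B_k(P_i)=L_k(P_i)$ and the basis $\{\tilde v_l\}$ this is precisely the Hermitian structure for which $\{\tilde v_l\}$ is orthonormal. Hence the content of the theorem is that the flip, viewed as a change of basis between these two orthonormal systems, is projectively unitary.

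Next I would invoke the genus zero comparison established in Section~\ref{4p}. There the parallel transport of the Hitchin connection along the relevant degeneration paths was identified with the KZ/TUY connection of \cite{TUY}, and the limiting Bohr--Sommerfeld decomposition with the factorization of the sheaf of vacua; combined with the compatibility of the Hermitian structures under these identifications, this shows that the genus zero part of the modular functor obtained from geometric quantization of these moduli spaces is isomorphic, as a modular functor with its Hermitian structure, to the $\SU(2)$ modular functor of \cite{AU2} (equivalently, by \cite{AU4}, to the Reshetikhin--Turaev functor). By \cite{AU3}, the genus zero part of a modular functor determines the whole functor up to isomorphism, and in particular determines the $S$-matrix; and in the Andersen--Ueno / Reshetikhin--Turaev functor the $S$-matrix is unitary with respect to the Hermitian structure \eqref{eq2}. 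Transporting this unitarity back through the isomorphisms above yields that the flip transformation relating $[\cdot,\cdot]_{P_1,\sigma_0}$ and $[\cdot,\cdot]_{P_2,\sigma_0}$ is a projective isometry, which is the claim.

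The main obstacle will be the second step: making precise that the structure coming from the Hitchin connection — the bundles $H^{(k)}$ over Teichm\"uller space together with all the factorization data and the Hermitian structures $(\cdot,\cdot)^{(k)}_P$ assembled over all pair of pants decompositions — genuinely constitutes a modular functor in the axiomatic sense of \cite{AU1,AU2}, so that the rigidity theorem of \cite{AU3} can be applied verbatim. I would remark, as in the discussion preceding the theorem, that a direct geometric proof parallel to the four-punctured sphere case is in principle available, since $N_{c_0}(\tilde\Sigma_{\tilde\s})$ is again a two-sphere and one could align the symbols of the two relevant second order differential operators and repeat that computation; but this is not needed once the genus zero comparison of Section~\ref{4p} and the rigidity result \cite{AU3} are in hand, and the argument above is the one I would carry out.
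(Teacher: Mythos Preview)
Your proposal is correct and follows essentially the same route as the paper: the paper's argument (given in the paragraph immediately preceding the theorem) is precisely that the flip on the once punctured torus is the $S$-transformation, that by \cite{AU3} the genus zero data of a modular functor determines the $S$-matrix, and that Section~\ref{4p} identifies the genus zero part of the geometric quantization with the $\SU(2)$ modular functor of \cite{AU2}, whence the $S$-matrix is unitary. You have spelled out more of the details and correctly flagged the point the paper leaves implicit---that the Hitchin-connection data assembles into a modular functor in the sense required to invoke \cite{AU3}---but the strategy is the same, including the remark that a direct argument parallel to the four-punctured sphere case should also be available since the moduli space is again a sphere.
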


\section{Well-definedness of the projective Hermitian structure}
\label{sect7}
We recall the setting from the introduction, where $\Sigma$ is a closed oriented surface of genus $g > 1$ and $P$ is a pair of pants decomposition of $\Sigma$. Recalling the map \eqref{eq1}, we define the representative $[\cdot,\cdot]_P^{(k)}$ of $(\cdot,\cdot)^{(k)}$ determined by $P$ by the formula
\begin{align*}
	[s_1,s_2]^{(k)}_{P,\sigma_0} = (P_\infty(\sigma_0,P)(s_1),P_\infty(\sigma_0,P)(s_2))_P^{(k)},
\end{align*}
for all $s_1,s_2 \in H^{(k)}_{\sigma_0}$.
\begin{thm}
  \label{thm14}
  The Hermitian structure $[\cdot,\cdot]^{(k)}_P$ is projectively preserved by the Hitchin connection.
\end{thm}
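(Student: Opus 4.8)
The plan is to use that, by its very definition, $[\cdot,\cdot]_P^{(k)}$ is the pull-back of the \emph{fixed} positive Hermitian form $(\cdot,\cdot)_P^{(k)}$ on $H_P^{(k)}$ along the smooth family of isomorphisms $P_\infty(\sigma,P)\colon H_\sigma^{(k)}\to H_P^{(k)}$ supplied by Theorem~\ref{thm11}, and to reduce the statement to the assertion that this family \emph{projectively trivialises} the Hitchin connection. Let $D$ denote the trivial connection on the product bundle $\calT\times H_P^{(k)}$. The key reduction is: it suffices to produce a (complex-valued) $1$-form $\beta$ on $\calT$ with
\begin{align*}
 D_V\bigl(P_\infty(\cdot,P)\,s\bigr)=P_\infty(\cdot,P)\,(\bnabla_V s)+\beta(V)\,P_\infty(\cdot,P)\,s
\end{align*}
for every vector field $V$ on $\calT$ and every smooth section $s$ of $H^{(k)}$; that is, $P_\infty(\cdot,P)$ intertwines $\bnabla$ with $D$ up to the scalar $1$-form $\beta$.

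Granting this, and using that $(\cdot,\cdot)_P^{(k)}$ is $D$-parallel, for sections $s_1,s_2$ of $H^{(k)}$ one computes directly
\begin{align*}
 V\bigl[[s_1,s_2]_{P,\sigma}^{(k)}\bigr]-[\bnabla_V s_1,s_2]_{P,\sigma}^{(k)}-[s_1,\bnabla_V s_2]_{P,\sigma}^{(k)}=\bigl(\beta(V)+\overline{\beta(V)}\bigr)\,[s_1,s_2]_{P,\sigma}^{(k)},
\end{align*}
which is exactly the defining identity for $[\cdot,\cdot]_P^{(k)}$ being projectively preserved, with $\alpha=\beta+\overline{\beta}$. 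So the whole content lies in the intertwining property, which is equivalent to: for any path $\gamma$ from $\sigma_0$ to $\sigma_1$ in $\calT$, the Hitchin parallel transport $P_\gamma$ satisfies $P_\infty(\sigma_1,P)\circ P_\gamma=c(\sigma_0,\sigma_1)\,P_\infty(\sigma_0,P)$ for a nonzero scalar depending only on the endpoints.

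To establish this last statement I would argue as follows. By Theorem~\ref{thm11}, $P_\infty(\sigma_0,P)$ is the limit as $t\to\infty$ of the renormalised Hitchin parallel transport along the degeneration path $(\sigma_{0,s})_{s\in[0,t]}$, the renormalisation being controlled fibrewise over each $N_c$ by the operators $\tilde u_{c,t}$, whose limit $\tilde u_{c,\infty}=\Delta_{G_\infty}$ and whose limiting target $H_P^{(k)}$ depend only on the Jeffrey--Weitsman polarisation $F_P$, not on the base point. Hence $P_\infty(\sigma_1,P)\circ P_\gamma$ is the corresponding limit along the concatenated path $\gamma*(\sigma_{1,s})_{s\in[0,t]}$. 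Now $(\sigma_{0,s})$ and $\gamma*(\sigma_{1,s})$ are two paths issuing from $\sigma_0$ and running off to the same boundary stratum of $\calT$ associated with $P$; since $\calT$ is contractible and $\bnabla$ is projectively flat (Theorems~\ref{MainGHCI} and~\ref{HCE}), Hitchin parallel transport between points of $\calT$ is path-independent up to a scalar, and, using the uniform $C^\infty$ convergence on $N'$ of the induced polarisations that underlies Theorem~\ref{thm11} (the estimates $\Abs{P_\infty-P(t)}\le c\,t^{\alpha}$ with $\alpha<-1$), this persists after passing to the renormalised limits. Thus the two limits differ by a scalar $c(\sigma_0,\sigma_1)$, depending smoothly on the endpoints; the cocycle relation $c(\sigma_0,\sigma_2)=c(\sigma_1,\sigma_2)\,c(\sigma_0,\sigma_1)$ forces $c(\sigma_0,\sigma_1)=f(\sigma_1)f(\sigma_0)^{-1}$ for a smooth nowhere-vanishing $f$ on $\calT$, and one puts $\beta=-d\log f$.

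The step I expect to be the main obstacle is precisely this comparison of the two renormalised limits: making rigorous that parallel transport along $\gamma*(\sigma_{1,s})$ and along $(\sigma_{0,s})$ yields the same element of $\mathrm{Hom}(H_{\sigma_0}^{(k)},H_P^{(k)})$ up to a scalar. I would handle it by interpolating the two one-parameter families of complex structures on $M$ and invoking projective flatness together with the uniform degeneration estimates of Section~\ref{sect4}; alternatively, and perhaps more transparently, one passes to $\End(H^{(k)})$, where the Hitchin connection induces a genuinely flat connection and the Remark following the definition of projective equivalence applies. Indeed, $P_\infty(\cdot,P)$ induces an isomorphism of $\End(H^{(k)})$ onto the constant bundle $\calT\times\End(H_P^{(k)})$, the claim above says exactly that this isomorphism is flat-parallel, so it carries the constant metric $\{\cdot,\cdot\}_P^{(k)}$ to a parallel — in particular preserved — Hermitian metric on $\End(H^{(k)})$, and this is equivalent to $[\cdot,\cdot]_P^{(k)}$ being projectively preserved by $\bnabla$.
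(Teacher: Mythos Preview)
Your proposal is correct and follows essentially the same route as the paper's own proof: both reduce to the statement that for any path $\gamma$ from $\sigma_0$ to $\sigma_1$ one has $P_\infty(\sigma_1,P)\circ P_\gamma = c\,P_\infty(\sigma_0,P)$ for a scalar $c$, and both obtain this by using projective flatness of $\bnabla$ on the simply connected $\calT$ to deform $\gamma$ to the concatenation of the canonical degeneration curve from $\sigma_0$ to $P$ with the reverse of the one from $\sigma_1$ to $P$. Your write-up is in fact more explicit than the paper's three-line argument --- you spell out the intertwining formulation with the $1$-form $\beta$, the cocycle step, and the $\End(H^{(k)})$ reformulation --- and you honestly flag the one genuinely delicate point (that the projective equivalence of finite-time parallel transports persists after passing to the renormalised $t\to\infty$ limit), which the paper's proof simply asserts without further comment.
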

\begin{proof}
  We consider two arbitrary complex structures $\sigma_1$ and $\sigma_2$. Parallel transport along any curve from $\sigma_1$ to $\sigma_2$ is invariant up to scale under perturbation of the curve, hence the curve can be deformed to the canonical curve from $\sigma_1$ to $P$ and composed with the reverse of the canonical curve from $\sigma_2$ to $P$ without changing the projective class of the parallel transport. But by the definition of $[\cdot,\cdot]_P^{(k)}$, the result now follows.
\end{proof}
\begin{thm}
  \label{thm15}
  For any two pair of pants decompositions $P_1$ and $P_2$ on $\Sigma$, any complex structure on $\sigma_0$ on $\Sigma$ and any level $k$, we have that $[\cdot,\cdot]^{(k)}_{P_1}$ and $[\cdot,\cdot]^{(k)}_{P_2}$ induce the same projective unitary structure on $H^{(k)}$.
\end{thm}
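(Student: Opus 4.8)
The plan is to reduce the equality of the two projective unitary structures first to a single point of $\calT$ and then to a single elementary move on the pants decomposition, and finally to invoke the local comparison results of Sections~\ref{4p} and~\ref{elliptic}. To carry out the first reduction I would use that, by Theorem~\ref{thm14}, both $[\cdot,\cdot]^{(k)}_{P_1}$ and $[\cdot,\cdot]^{(k)}_{P_2}$ are projectively preserved by the Hitchin connection $\bnabla$; equivalently, the induced Hermitian structures $\{\cdot,\cdot\}^{(k)}_{P_1}$ and $\{\cdot,\cdot\}^{(k)}_{P_2}$ on $\End(H^{(k)})$ are preserved by the connection that $\bnabla$ induces there. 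Since $\bnabla$ is projectively flat (Theorem~\ref{MainGHCI}), the induced connection on $\End(H^{(k)})\cong H^{(k)}\otimes(H^{(k)})^{*}$ is genuinely flat, and since the Teichm\"uller space $\calT$ is contractible this flat connection has trivial holonomy. Hence a Hermitian metric on $\End(H^{(k)})$ that it preserves is determined by its value at any single point: if $h_1,h_2$ are two such metrics, the operator $A$ with $h_1(\cdot,\cdot)=h_2(A\,\cdot,\cdot)$ is parallel, so if it is the identity at one $\sigma_0$ it is the identity everywhere. Therefore it suffices to prove that, for one (equivalently every) $\sigma_0\in\calT$, the representatives $[\cdot,\cdot]^{(k)}_{P_1,\sigma_0}$ and $[\cdot,\cdot]^{(k)}_{P_2,\sigma_0}$ are projectively equivalent.

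Next I would invoke the classical connectivity theorem of Hatcher and Thurston: any two pair of pants decompositions of $\Sigma$ are joined by a finite sequence of elementary moves, each of which replaces a single curve of the decomposition and is supported in an embedded subsurface $S\subset\Sigma$ which is either a four-holed sphere or a one-holed torus. So we may assume $P_1$ and $P_2$ coincide on a curve system $P_0$ (containing $\partial S$) and differ only by the single curve $\gamma_i\subset S$, $i=1,2$, where $\{\gamma_1\}$ and $\{\gamma_2\}$ are the two pants decompositions of $S$ exchanged by the move. I would then choose the degeneration path defining $P_\infty(\sigma_0,P_i)$ so that it first inserts long flat cylinders along $P_0$ and afterwards along $\gamma_i$. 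By the factorization theorem of Section~\ref{sect3}, the limiting space $H^{(k)}_{P_0,\bar\sigma_0}$ is identified with $\bigoplus_{c}H^{0}(\bar N^{c}_{\bar\sigma_0},\calL_{c,k})$, where $\bar N^{c}$ is the product of the moduli space of the subsurface $S$ with fixed boundary holonomies $c|_{\partial S}$ and the point-like moduli spaces attached to the pants of $\Sigma\setminus S$. Because the operators that control the degeneration of the Hitchin connection in Section~\ref{sect4} (Theorem~\ref{prop1} and Theorem~\ref{thm11}) are local on the moduli space and compatible with this product decomposition, the limiting parallel transport $P_\infty(\sigma_0,P_i)$ should factor, on each summand, as the $P_0$-degeneration limit followed by the local $\gamma_i$-degeneration limit for $S$; and since the norm formula \eqref{eq2} is a product over the vertices and edges of the trivalent graph, hence multiplicative under gluing of surfaces, the representative $[\cdot,\cdot]^{(k)}_{P_i,\sigma_0}$ becomes on each summand the local pairing of $S$ attached to $\gamma_i$, tensored with a $P$-independent pairing on the spectator factor.

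Finally, on each summand I would apply the local results of the preceding two sections: if $S$ is a four-holed sphere this is Theorem~\ref{thm12}, and if $S$ is a one-holed torus it is Theorem~\ref{thm13}; each of these asserts exactly that the two local pairings attached to $\gamma_1$ and $\gamma_2$ are projectively equivalent, with a projective constant coming from the flip transformation of the Reshetikhin--Turaev TQFT and hence independent of the summand. Summing over summands shows that $[\cdot,\cdot]^{(k)}_{P_1,\sigma_0}$ and $[\cdot,\cdot]^{(k)}_{P_2,\sigma_0}$ are projectively equivalent, and by the first step this proves the theorem.

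I expect the main obstacle to be precisely the factorization asserted in the second paragraph: establishing that the limiting-parallel-transport construction $P_\infty(\sigma_0,-)$ obeys a gluing axiom compatible with the product decomposition $\bar N^{c}\cong N_S^{c}\times(\text{points})$, so that the block-diagonal form above is exact rather than merely asymptotic. Proving this cleanly requires revisiting the proof of the factorization theorem in Section~\ref{sect3} and of Theorems~\ref{prop1} and~\ref{thm11} in Section~\ref{sect4}, and checking that every differential operator occurring there localizes to the subsurfaces and acts as the identity on the spectator factor; the multiplicativity of \eqref{eq2} then matches the norms on the nose, and the comparison collapses onto the local statements already established.
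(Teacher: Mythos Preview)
Your proposal is correct and follows essentially the same route as the paper, only with far more detail: the paper's proof is the single sentence ``This is an immediate consequence of Theorem~\ref{thm12} and \ref{thm13},'' which tacitly uses exactly the Hatcher--Thurston reduction to elementary moves that you spell out, together with the fact that Theorems~\ref{thm12} and~\ref{thm13} are already formulated for pants decompositions of the \emph{full} surface $\Sigma$ differing by a single flip.

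Two small remarks. First, your opening reduction to a single $\sigma_0$ via projective flatness is sound but not needed here: Theorems~\ref{thm12} and~\ref{thm13} already assert projective equivalence of $[\cdot,\cdot]^{(k)}_{P_1,\sigma_0}$ and $[\cdot,\cdot]^{(k)}_{P_2,\sigma_0}$ at each $\sigma_0$, so the chain of elementary moves gives the conclusion directly. Second, the gluing/factorization issue you flag as the main obstacle is precisely what the paper absorbs into the \emph{proofs} of Theorems~\ref{thm12} and~\ref{thm13}: the proof of Theorem~\ref{thm12} explicitly obtains the projective equivalence ``by the tensor product of the parallel transport on the four-punctured sphere in question with the identity on the complementary part in the factorization,'' and Theorem~\ref{thm13} is argued analogously. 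So your concern is legitimate, but it is addressed upstream rather than in the proof of Theorem~\ref{thm15} itself.
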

\begin{proof}
  This is an immediate consequence of Theorem~\ref{thm12} and \ref{thm13}.
\end{proof}

\section{Comparison with the $L^2$ Hermitian structure}
\label{sect8}
In order to analyze the large $k$ asymptotics of the Hermitian structures $[\cdot,\cdot]^{(k)}_P$, we return to the map
\begin{align*}
	h_P : M \to [-2,2]^{3g-3}.
\end{align*}
We now compose this map with the inverse of the map from $[0,1]$ to $[-2,2]$ given by $t \mapsto 2\cos(t\pi$ to obtain the map
\begin{align*}
	\tilde{h}_P : M \to [0,1]^{3g-3}.
\end{align*}
Define
\begin{align*}
	\calD = \{z \in [0,1]^{3g-3} \mid \forall v \in V_{\Gamma_P}, \,\ z(v) \in T \},
\end{align*}
where
\begin{align*}
	T = \{z_1,z_2,z_3 \in [0,1] \mid \abs{z_1-z_2} \leq z_3 \leq z_1+z_2,z_2+z_2+z_3 \leq 2\}.
\end{align*}
In \cite{JW}, the following Propositions is established.
\begin{prop}
  We have that
  \begin{align*}
		\tilde{h}_P(M) = \calD.
  \end{align*}
\end{prop}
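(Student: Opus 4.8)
The plan is to localize the statement to single pairs of pants and then reassemble. The key input, following \cite{JW}, is the classical description of which triples of $\SU(2)$-conjugacy classes contain elements with trivial product: parametrizing conjugacy classes by the angle $\theta\in[0,\pi]$ (so $\Tr = 2\cos\theta$ and $z=\theta/\pi$), there exist $U_1,U_2,U_3\in\SU(2)$ of angles $\pi z_1,\pi z_2,\pi z_3$ with $U_1U_2U_3 = \Id$ if and only if $(z_1,z_2,z_3)\in T$. I would prove this by computing the range of $\Tr(U_1U_2)$ as the relative position of the rotation axes of $U_1$ and $U_2$ varies: it is exactly the closed interval with endpoints $2\cos\bigl(\pi\min(z_1+z_2,\,2-z_1-z_2)\bigr)$ and $2\cos\bigl(\pi\lvert z_1-z_2\rvert\bigr)$; requiring that $U_3=(U_1U_2)^{-1}$ have the prescribed angle then yields precisely $\lvert z_1-z_2\rvert\le z_3\le z_1+z_2$ and $z_1+z_2+z_3\le 2$.

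For the inclusion $\tilde h_P(M)\subseteq\calD$: given $[A]\in M$ and a vertex $v\in V_{\Gamma_P}$, cut $\Sigma$ along the curves of $P$ to expose the pair of pants $\Pi_v$ bounded by $e_1(v),e_2(v),e_3(v)$. Restricting $A$ to $\Pi_v$ and picking a basepoint on $\partial\Pi_v$, the three boundary holonomies multiply to $\Id$ since the boundary loops do so in $\pi_1(\Pi_v)$, and they have angles $\pi z(e_i(v))$ by the definition of $\tilde h_P$. The local lemma then forces $(z(e_1(v)),z(e_2(v)),z(e_3(v)))\in T$; as $v$ is arbitrary, $\tilde h_P([A])\in\calD$.

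For the reverse inclusion $\calD\subseteq\tilde h_P(M)$: given $z\in\calD$, use the converse half of the local lemma to choose, for each vertex $v$, a flat connection $A_v$ on $\Pi_v$ whose boundary holonomies have angles $\pi z(e_i(v))$ and product $\Id$. Along each curve $e$ of $P$ the two boundary holonomies inherited from the adjacent pairs of pants lie in the same conjugacy class, so I would conjugate one side by a suitable element of $\SU(2)$ (and, if necessary, insert a twist-flow factor centralizing that holonomy) to make the two connections agree on a collar of $e$; gluing over a spanning tree of $\Gamma_P$ and then closing up the remaining edges produces a flat connection $A$ on $\Sigma$ with $\tilde h_P([A])=z$. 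To handle the lower-dimensional faces of $\calD$, where some $U_i^v$ become reducible and the gluing degenerates, it suffices to note that $\tilde h_P(M)$ is compact and that $\calD$ is the closure of its interior, so one may build $A$ for interior $z$ and pass to a limit.

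I expect the main obstacle to be the bookkeeping of the marked point $p$. The pair of pants $\Pi_{v_0}$ containing $p$ is really a four-holed sphere, so the holonomy relation there reads $U_1U_2U_3 = -\Id$ rather than $U_1U_2U_3=\Id$; I would need to verify that, under the chosen identification of conjugacy classes with $[0,1]$ and the orientation conventions on the cuffs, this sign-twisted relation still carves out exactly $T$ at $v_0$ --- equivalently, that passing to the $\SO(3)=\SU(2)/\{\pm\Id\}$-reduction of $A$ removes the twist and that the $\SU(2)$-lift can be adjusted one cuff at a time (the same phenomenon that produces the parity condition ``$l(e)\in 2\bbZ$ for separating $e$'' in the refined Bohr--Sommerfeld count). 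Apart from this check and the local lemma, the argument is the purely formal gluing construction sketched above.
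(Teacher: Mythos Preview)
The paper does not prove this proposition; the sentence immediately preceding it attributes the result to \cite{JW}, and no argument is supplied. Your sketch --- the local characterization of which triples of $\SU(2)$ conjugacy classes admit elements with trivial product, followed by gluing over the pants graph --- is precisely the argument of \cite{JW}, so in that sense your approach matches.

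Your instinct that the marked point is the crux is correct, but your proposed resolution does not work as stated. The relation $U_1U_2U_3=-\Id$ at the vertex $v_0$ containing $p$ does \emph{not} cut out $T$ in the angle coordinates: replacing $U_3$ by $-U_3$ sends $z_3$ to $1-z_3$, so the twisted tetrahedron is $\{(z_1,z_2,z_3):(z_1,z_2,1-z_3)\in T\}$, which is genuinely different from $T$. Concretely, the point $z=(0,\dots,0)$ lies in $\calD$ as the paper defines it (since $(0,0,0)\in T$ at every vertex), yet it cannot lie in $\tilde h_P(M)$: all cuff holonomies would then be $\Id$, forcing their product at $v_0$ to be $\Id$ rather than $-\Id$. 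Passing to $\SO(3)$ does kill the sign, but lifting back to $\SU(2)$ introduces a $w_2$-obstruction, and distributing that obstruction over the edges of $\Gamma_P$ is exactly the step you are deferring with the phrase ``adjusted one cuff at a time.'' In \cite{JW} this is handled by a specific convention on the action coordinates (equivalently, by modifying the vertex condition at $v_0$); you should consult their treatment directly rather than assert that the twisted relation still carves out exactly $T$.
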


Now we define the function $G_P^{(k)} : M \to \bbC$ by
\begin{align*}
	G_P^{(k)} = H^{(k)} \circ \tilde{h}_P
\end{align*}
where $H^{(k)} : \D \ra \bR_+$ is a function we determine below.
In order to extend $G^{(k)}_P$ from being defined just at the pair of pants $P$ to being defined also for points in the interior of $\calT$, we introduce the following bundles. Let $\calC = \calT \times C^\infty(M)$. We define a subbundle $\calN^{(k)}$ of $\calC$ whose fibers are
\begin{align*}
	\calN_\sigma^{(k)} = \ker(T^{(k)}_\sigma)
\end{align*}
for $\sigma \in \calT$ and where
\begin{align}
	\label{eq14}
	T_\sigma^{(k)} : C^\infty(M) \to \End(H_\sigma^{(k)})
\end{align}
is the Toeplitz map given by
\begin{align*}
	f \mapsto T^{(k)}_{\sigma,f} = \pi_\sigma^{(k)} \circ M_f.
\end{align*}
Here $\pi_\sigma^{(k)}$ is the orthorgonal projection onto $H^0(M_\sigma,L^k)$ and $M_f : C^\infty(M) \to C^\infty(M)$ is the multiplication operator
\begin{align*}
	M_f(s) = fs
\end{align*}
defined for all $s \in C^\infty(M,L^k)$ and any $f \in C^\infty(M)$. Now we introduce the quotient subbundle
\begin{align*}
	\calC^{(k)} = \calC/\calN^{(k)}.
\end{align*}
Since the Toeplitz map is surjective \cite{BMS}, we know that $\calC^{(k)}$ is a vector bundle over $\calT$ which is isomorphic to $\End(H^{(k)})$ over $\calT$ via the linear bundle isomorphism $T^{(k)}$.

Let $C^\infty_\bbR(M)$ be the subspace of $C^\infty(M)$ consisting of the real valued functions. We let $\calN_\bbR \subset \calN^{(k)}$ and $\calC_\bbR^{(k)} \subset \calC^{(k)}$ consist of the corresponding subbundles of real valued functions
\begin{thm}
  Hermitian structures on $H^{(k)}$ are under the isomorphism $T^{(k)}$ in one to one correspondence between smooth sections of $C_\bbR^{(k)}$ over $\calT$. The subset of Hermitian structures which are projectively preserved by the Hitchin connection are in one to one correspondence with sections of $C_\bbR^{(k)}$ which are preserved projectively by a certain flat connection $D^{(k)}$ acting on smooth sections of $C_\bbR^{(k)}$.
\end{thm}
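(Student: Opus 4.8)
The plan is to reduce both assertions to fibrewise linear algebra over $\calT$, using the $L^2$-structure $(\cdot,\cdot)_{L^2}^{(k)}$ as a fixed reference inner product.

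For the first assertion I would fix $\sigma\in\calT$ and recall that every Hermitian form $h$ on the finite dimensional space $H^{(k)}_\sigma$ is of the form $h(s_1,s_2)=(A_\sigma s_1,s_2)_{L^2}^{(k)}$ for a unique $A_\sigma\in\End(H^{(k)}_\sigma)$ which is self-adjoint with respect to $(\cdot,\cdot)_{L^2}^{(k)}$, with $h$ a Hermitian structure precisely when $A_\sigma$ is positive definite. As $T^{(k)}_\sigma$ is surjective with kernel $\calN^{(k)}_\sigma$, there is a unique class $[f]\in\calC^{(k)}_\sigma$ with $T^{(k)}_{\sigma,f}=A_\sigma$, and the identity $(T^{(k)}_{\sigma,f})^*=T^{(k)}_{\sigma,\bar f}$ — immediate from self-adjointness of $\pi^{(k)}_\sigma$ and of $M_f$ for real $f$ — shows that $A_\sigma$ is self-adjoint exactly when $[f]$ lies in the real subbundle, i.e.\ when $[f]\in\calC^{(k)}_{\bbR,\sigma}$. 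Letting $\sigma$ vary and using that $T^{(k)}$ is a smooth bundle isomorphism, $h$ is smooth iff $A$ is iff $[f]$ is a smooth section; this yields the bijection, the Hermitian structures being the everywhere positive definite sections.

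For the second assertion I would define $D^{(k)}$ as the connection on $\calC^{(k)}_\bbR$ which, under the bijection just described, corresponds to the connection $\widehat\nabla$ induced by the Hitchin connection on the bundle of Hermitian forms on $H^{(k)}$, namely the one with covariant derivative
\[
(\widehat\nabla_V h)(s_1,s_2)=V[h(s_1,s_2)]-h(\bnabla_V s_1,s_2)-h(s_1,\bnabla_V s_2).
\]
Since $\widehat\nabla$ preserves conjugate-symmetry (and positivity), $D^{(k)}$ is a genuine connection on $\calC^{(k)}_\bbR$, and with this definition the statement ``$h$ is projectively preserved by $\bnabla$'' is, by the very definition given in the introduction, the statement ``$\widehat\nabla_V h\in\bbC\,h$ for all $V$'', i.e.\ that the corresponding section of $\calC^{(k)}_\bbR$ is preserved projectively by $D^{(k)}$, with the same $1$-form $\alpha$. (Equivalently one may route this through the Remark of the introduction: $\widehat\nabla$ corresponds, on induced Hermitian structures, to the connection on $\End(H^{(k)})$ induced by $\bnabla$.) Finally one checks that $D^{(k)}$ is flat: writing $F_{\bnabla}=\beta\otimes\Id$ for a closed $2$-form $\beta$ on $\calT$ — possible because $\bnabla$ is projectively flat by Theorem~\ref{MainGHCI} — a direct computation gives $F_{D^{(k)}}=-2\,\Re\beta\otimes\Id$, and $\Re\beta=0$ because the curvature of the Hitchin connection is $i$ times a real form times the identity (cf.\ \cite{H,ADW,A9}); even without this last point one sees $F_{D^{(k)}}$ is a multiple of the identity, so $D^{(k)}$ is at least projectively flat, which is all that the later sections require.

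The fibrewise algebra is routine; the only delicate point is the construction of $D^{(k)}$. I would stress in particular that one must not define $D^{(k)}$ by transporting through $T^{(k)}$ the flat connection that $\bnabla$ induces on $\End(H^{(k)})$: that connection fails to preserve the real subbundle $\calC^{(k)}_\bbR$, because conjugation by Hitchin parallel transport does not commute with the $(\cdot,\cdot)_{L^2}^{(k)}$-adjoint (the Hitchin connection is not $L^2$-unitary). The symmetric combination built into $\widehat\nabla$ is exactly what repairs this, and it is also exactly what converts the projective flatness of $\bnabla$ into the flatness of $D^{(k)}$.
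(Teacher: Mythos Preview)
Your argument is correct and lands on the same object as the paper, but the packaging differs slightly. The paper works concretely: it writes the Hermitian structure as the weighted $L^2$-pairing $(s_1,s_2)_{G,\sigma}=\int_M\langle s_1,s_2\rangle G_\sigma\,\omega^n/n!$, differentiates this along $\calT$ using $\Nabla_V=\Nablat_V-u(V)$, and reads off the projective preservation condition as an operator identity of the form $\pi^{(k)}_\sigma V[G]+\pi^{(k)}_\sigma G\,u(V)+\pi^{(k)}_\sigma u(V)^*G=c_\sigma\Id$; it then invokes the Toeplitz calculus of \cite{A6} to rewrite this as a first-order equation on $[G]$, thereby obtaining $D^{(k)}$ with an explicit formula. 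Your route is the abstract dual of this: you identify the bundle of Hermitian forms with $\calC^{(k)}_\bbR$ via $T^{(k)}$ and transport the tautological connection $\widehat\nabla$ on Hermitian forms to define $D^{(k)}$. The two definitions agree because, as you implicitly use, $(s_1,s_2)_{G,\sigma}=(T^{(k)}_{\sigma,G}s_1,s_2)_{L^2}$, so the paper's operator identity is exactly the coordinate expression of your $\widehat\nabla$. Your approach buys a clean argument for flatness of $D^{(k)}$ from the projective flatness of $\bnabla$ (and your caution that one must use $\widehat\nabla$ rather than the conjugation connection on $\End(H^{(k)})$ is well taken); the paper's approach buys an explicit formula for $D^{(k)}$ in terms of $u(V)$, which is what is actually used in the subsequent asymptotic analysis.
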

\begin{proof}
Suppose $G : \T \ra C^\infty_\bbR(M)$. Then we get a Hermitian structure on $H^{(k)}$ by the fomula
$$ (s_1,s_2)_{G,\sigma} = \int_M \langle s_1,s_2\rangle G_\sigma \frac{\omega^n}{n!}.$$
We see that $(\cdot,\cdot)_G$ is projectively preserved if an only if 
$$ \pi^{(k)}_\sigma V[G] + \pi^{(k)}_\sigma G u(V) + \pi^{(k)}_\sigma  u(V)^* G = c_\sigma \Id$$
for all vector fields $V$ on $\T$ and where $c$ is some $c \in C^{\infty}(\T)$.
Now a simply rewrite of this formula using the techniques from \cite{A6} gives an explicit formula for $D^{(k)}$.
\end{proof}

Let $P$ be a pair of pants. We now introduce a top form on the fibers of $h_P$ as follows. We use the symplectic form to provide an isomorphism, at a generic point, between the top exterior power of the cotangent bundle along the fibers and the top exterior power of the tanget space to $\calD$ at the image of the point under $\tilde{h}_P$. Over $\calD \subset [0,1]^{3g-3}$ we have a canonical section of the top exterior power of the tangent bundle, which we use to induce a volume form on the fibers. We denote the fiberwise volume form $\Omega_{P,b}$. For each of the Bohr--Sommerfeld fiber $b$ of $\tilde{h}_P$, we introduce a projection operator
\begin{align*}
	\pi_{P,b}^{(k)} : C^\infty(\tilde h_P^{-1}(b),\calL^k) \to H_{P,b}^{(k)},
\end{align*}
where $H_{P,b}^{(k)}$ is the subset of $H_P^{(k)}$ consisting of covariant constant sections with support on $\tilde{h}^{-1}_P(b)$ associated to the inner product on $C^\infty(\tilde{h}^{-1}_P(b),\calL^k)$ given by
\begin{align*}
	(s_1,s_2)^{(k)}_{P,b} = \int_{\tilde{h}_P^{-1}(b)} \langle s_1, s_2 \rangle \Omega_{P,b}.
\end{align*}
We can now define
\begin{align*}
	T_P^{(k)} : C^\infty(M) \to \End(H_P^{(k)})
\end{align*}
as the composite
\begin{align*}
	T_P^{(k)} = \bigoplus_{b \in B_k(P)} \pi_{P,b}^{(k)} \circ M_{f|_{\tilde{h}^{-1}_P(b)}}.
\end{align*}
Now we let
\begin{align*}
	\calN_P = \ker T_P^{(k)}
\end{align*}
and
\begin{align*}
	\calC_{\bbR,P}^{(k)}= C^\infty(M) / \calN_{\bbR,P},
\end{align*}
where $\calN_{\bbR,P}$ is the real part of $\calN_P$.

Let
\begin{align*}
	P_t^\calC(\sigma_0,P) : \calC_{\bbR,\sigma_0}^{(k)} \to \calC_{\bbR,\sigma_t}^{(k)}
\end{align*}
be the parallel transport with respect to the connection $D^{(k)}$. Now we introduce a sub-bundle $\calC_{\bbR,D}^{(k)}$ of $\calC_{\bbR}^{(k)}$ whose fiber over a $\sigma\in \T$ consists of those equivalence classes of functions which at under $T^{(k)}_\sigma$ is taken to operators which acts diagonally with respect to the direct sum decomposition 
$$ H^{(k)}_\sigma = \bigoplus_{b\in B^{(k)}_P} P_\infty(\sigma,P)^{-1}(H^{(k)}_{P,b}).$$
\begin{thm}
  The operators $P_t^\calC(\sigma_0,P)|_{\calC_{\bbR,D,\sigma_0}^{(k)}}$ has a well-defined limit
  \begin{align*}
	  P_\infty^\calC(\sigma_0,P)|_{\calC_{\bbR,D,\sigma_0}^{(k)}} : \calC_{\bbR , D, \sigma_0}^{(k)} \to \calC^{(k)}_{\bbR,P}
  \end{align*}
  which is an isomorphism.
\end{thm}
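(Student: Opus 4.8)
The plan is to reduce the statement to the degeneration analysis of Section~\ref{sect4} by identifying the parallel transport of $D^{(k)}$ with the push‑forward action of the Hitchin parallel transport on Hermitian forms. Under the Toeplitz isomorphism $T^{(k)}$ a section $[G]$ of $\calC^{(k)}_\bbR$ corresponds to the Hermitian form $(\cdot,\cdot)_G$ on $H^{(k)}$, and by the description of $D^{(k)}$ preceding the statement a function $G_t$ represents a $D^{(k)}$‑parallel section along $\sigma_t$ exactly when $\pi_{\sigma_t}\bigl(\dot G_t+G_t u(\sigma_t')+u(\sigma_t')^*G_t\bigr)=0$, which says precisely that $(\cdot,\cdot)_{G_t}$ is constant on Hitchin‑parallel pairs. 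Hence, writing $P_t=P_t(\sigma_0,P)\colon H^{(k)}_{\sigma_0}\to H^{(k)}_{\sigma_t}$ for the Hitchin parallel transport,
\[
	P_t^\calC(\sigma_0,P)(h)=(P_t)_*h,\qquad\bigl((P_t)_*h\bigr)(u,v)=h\bigl(P_t^{-1}u,P_t^{-1}v\bigr),
\]
under the identification $\calC^{(k)}_\bbR\cong\{\text{self-adjoint endomorphisms of }H^{(k)}\}$ (only the $\sigma$‑independent $L^2$‑pairing enters). Thus the whole question is governed by $P_t$, which was analysed in Section~\ref{sect4}.

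Set $W^{(\sigma)}_b=P_\infty(\sigma,P)^{-1}(H^{(k)}_{P,b})$, so that $\calC^{(k)}_{\bbR,D,\sigma}$ consists of the forms for which the decomposition $\bigoplus_bW^{(\sigma)}_b$ is orthogonal; this is a subbundle of rank $|B_k(P)|=\dim\calC^{(k)}_{\bbR,P}$. Because the degeneration family based at $\sigma_t$ is the tail of the one based at $\sigma_0$ and has the same limiting polarization, one has $P_\infty(\sigma_0,P)=P_\infty(\sigma_t,P)\circ P_t$ up to scale, hence $P_t W^{(0)}_b=W^{(t)}_b$; therefore $(P_t)_*$ carries $\calC^{(k)}_{\bbR,D,\sigma_0}$ into $\calC^{(k)}_{\bbR,D,\sigma_t}$, and in the limit, since $P_\infty(\sigma_0,P)W^{(0)}_b=H^{(k)}_{P,b}$ by definition, $(P_\infty(\sigma_0,P))_*$ takes a form diagonal for $\bigoplus_bW^{(0)}_b$ to one diagonal for the Bohr--Sommerfeld decomposition $\bigoplus_bH^{(k)}_{P,b}$, i.e.\ to an element of $\calC^{(k)}_{\bbR,P}$ (every such diagonal form is in the image of $T^{(k)}_P$, since one can prescribe the fibrewise averages over the finitely many Bohr--Sommerfeld fibers independently).

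For the existence of the limit I would run the argument of Theorem~\ref{thm11} on the finite-dimensional bundle $\calC^{(k)}_{\bbR,D}$: its $D^{(k)}$‑transport solves an ODE whose generator is built, through the identity above, from $u(\sigma_t')$ and the matrices $Z_t=Z_\infty+Z'_\infty t^{-1}+R(t)$, $Y_t$ of Theorem~\ref{prop1}, hence differs from its limit by $O(t^\alpha)$ for some $\alpha<-1$, so the Dyson series \eqref{formelstjerne} is absolutely summable and Cauchy as $t\to\infty$. Equivalently, since $P_t\to P_\infty(\sigma_0,P)$ in the sense of Theorem~\ref{thm11} and this limit is an isomorphism onto $H^{(k)}_P$ by Corollary~\ref{Vor}, push‑forward is continuous in the transporting map and $(P_t)_*\to(P_\infty(\sigma_0,P))_*$. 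Either way $P_\infty^\calC(\sigma_0,P)|_{\calC^{(k)}_{\bbR,D,\sigma_0}}=(P_\infty(\sigma_0,P))_*$, with range in $\calC^{(k)}_{\bbR,P}$ by the previous paragraph; it is an isomorphism because $P_\infty(\sigma_0,P)$ is, push‑forward along a linear isomorphism being a bijection on Hermitian forms which carries diagonal forms bijectively to diagonal forms, and $\dim\calC^{(k)}_{\bbR,D,\sigma_0}=\dim\calC^{(k)}_{\bbR,P}$; its inverse is push‑forward along $P_\infty(\sigma_0,P)^{-1}$, read into $\calC^{(k)}_{\bbR,P}$ via $T^{(k)}_P$.

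The main obstacle I anticipate is making the convergence precise relative to the distributional space $H^{(k)}_P$: one must see that the limiting form is genuinely represented, through $T^{(k)}_P$, by the fibrewise volume forms $\Omega_{P,b}$ — equivalently that along $\sigma_t$ the Toeplitz structures restricted to $\calC^{(k)}_{\bbR,D}$ degenerate to fibrewise averaging — so that the limit lands in $\calC^{(k)}_{\bbR,P}$ as that space is actually defined. This is the $\calC$‑analogue of Corollary~\ref{Vor} and of the Proposition on $s_{P,\sigma_t}$, and I would derive it from those results together with the control $\Delta_{G_t}\to\Delta_{G_\infty}$ of Theorem~\ref{prop1}, rather than redoing the $Z_t$‑asymptotics.
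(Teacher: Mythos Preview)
Your reduction to the degeneration results of Section~\ref{sect4} is exactly what the paper does: its entire proof reads ``This theorem follows immediately from the above results,'' meaning Theorem~\ref{prop1}, Theorem~\ref{thm11} and Corollary~\ref{Vor}. You have simply spelled out how those results combine, which is more than the paper itself provides.

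One small slip worth flagging: you describe $\calC^{(k)}_{\bbR,D,\sigma}$ as ``the forms for which the decomposition $\bigoplus_b W^{(\sigma)}_b$ is orthogonal,'' but the paper defines it as classes $[G]$ for which the \emph{operator} $T^{(k)}_{\sigma,G}$ acts diagonally on that decomposition. Since $T^{(k)}_{\sigma,G}$ is self-adjoint for the $L^2$-pairing (not for $(\cdot,\cdot)_G$), these two conditions are not obviously the same unless the $W^{(\sigma)}_b$ are already $L^2$-orthogonal; in particular your dimension count $\dim\calC^{(k)}_{\bbR,D,\sigma_0}=|B_k(P)|$ tacitly uses this. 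The paper does not address this point either, so it is not a divergence from the paper's argument, but it is a place where your write-up asserts more than it proves.
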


This theorem follows immediately from the above results.

Using the function $G_P^{(k)}$, we can extend $(\cdot,\cdot)^{(k)}_P$ to a Hermitian structure on $C^\infty(M,\calL^k)$ via the formula
\begin{align*}
	(s_1,s_2)_P^{(k)} = \int_M \langle s_1, s_2 \rangle G_P^{(k)}\frac{\omega^m}{m!},
\end{align*}
for $s_1,s_2 \in C^\infty(M,\calL^k)$.

We define the representative $[\cdot,\cdot]_P^{(k)}$ of $(\cdot,\cdot)^{(k)}$ determined by $P$ by the formlua
\begin{align*}
	[s_1,s_2]_{P,\sigma_0}^{(k)} = (P_\infty(\sigma_0,P)(s_1),P_\infty(\sigma_0,P)(s_2))_P^{(k)},
\end{align*}
for all $s_1,s_2 \in H_{\sigma_0}^{(k)}$.
We observe that we can embed $\calC_{\bbR}^{(k)}$ into the trivial bundle $C^\infty(M)\times \T$, where it maps onto the functions which are orthogonal to the subbundle $\N^{(k)}$. This allows us to define $G^{(k)} : \T \ra C^\infty(M)$ which is orthogonal to $\N^{(k)}$ and such that
it projects to a covariant constant section of $\calC_{\bbR,D}^{(k)}$ and limits to $G_P$ at $P$, where we have chosen $H^{(k)}=1$.
From the construction, $G^{(k)}$ seems to depend $P$, but since by construction we have the following theorem, it actually does not.
\begin{thm}\label{thm18}
We have that
$$[s_1,s_2]_{P,\sigma}^{(k)} = \int_M \langle s_1, s_2 \rangle G_\sigma^{(k)}\frac{\omega^m}{m!},
$$
for all $s_1,s_2\in H^{(k)}_{\sigma}$ and all $\sigma\in \T$.
\end{thm}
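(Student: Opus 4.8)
The plan is to show that the two sides, regarded as Hermitian structures on the bundle $H^{(k)}$ over $\T$, correspond under the isomorphism $T^{(k)}$ to the \emph{same} smooth section of $\calC_\bbR^{(k)}$; since two functions representing the same class in $\calC_\bbR^{(k)}$ define the same Hermitian structure $(s_1,s_2)\mapsto\int_M f\langle s_1,s_2\rangle\,\omega^m/m!$ — their difference lies in $\ker T^{(k)}_\sigma$, hence integrates to zero against $\langle s_1,s_2\rangle$ for $s_1,s_2\in H^0(M_\sigma,\calL^k)$ — this identity of sections yields the theorem. By construction $G^{(k)}$ is the representative orthogonal to $\calN^{(k)}$ of the unique $D^{(k)}$-covariant constant section of $\calC_{\bbR,D}^{(k)}$ whose image under $P_\infty^\calC(\sigma_0,P)$ is the class of $G_P^{(k)}$, normalised by the choice $H^{(k)}=1$; existence and uniqueness of this section follow from the theorem preceding the present one — which realises $P_\infty^\calC(\sigma_0,P)$ as an isomorphism $\calC_{\bbR,D,\sigma_0}^{(k)}\to\calC_{\bbR,P}^{(k)}$ — together with the simple connectedness of $\T$, which forces a $D^{(k)}$-parallel section to be determined by one value. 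So it suffices to show that $[\cdot,\cdot]_P^{(k)}$ corresponds to this same section.

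Let $\Theta$ be the section of $\calC_\bbR^{(k)}$ corresponding to $[\cdot,\cdot]_P^{(k)}$. First, $\Theta$ is $D^{(k)}$-covariant constant (after fixing the scalar discussed below): by Theorem~\ref{thm14} the structure $[\cdot,\cdot]_P^{(k)}$ is projectively preserved by the Hitchin connection, and by the characterisation established above such structures correspond exactly to the sections of $\calC_\bbR^{(k)}$ projectively preserved by $D^{(k)}$. Second, $\Theta$ is a section of the diagonal subbundle $\calC_{\bbR,D}^{(k)}$: by definition $[s_1,s_2]_{P,\sigma}^{(k)}=(P_\infty(\sigma,P)(s_1),P_\infty(\sigma,P)(s_2))_P^{(k)}$, and on $H_P^{(k)}$ the structure $(\cdot,\cdot)_P^{(k)}$ is block diagonal for the Bohr--Sommerfeld decomposition $H_P^{(k)}=\bigoplus_{b\in B_k(P)}H_{P,b}^{(k)}$, since distributional sections supported on distinct fibres $h_P^{-1}(b)$ are orthogonal; pulling back along $P_\infty(\sigma,P)$ makes $[\cdot,\cdot]_{P,\sigma}^{(k)}$ block diagonal for $\bigoplus_b P_\infty(\sigma,P)^{-1}(H_{P,b}^{(k)})$, which is precisely the condition cutting out $\calC_{\bbR,D}^{(k)}$.

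Third, the image $P_\infty^\calC(\sigma_0,P)(\Theta_{\sigma_0})$ equals the class of $G_P^{(k)}$. Indeed the preceding theorem builds $P_\infty^\calC$ as the limit of the $D^{(k)}$-parallel transport along the degenerating family $\sigma_t$ attached to $(\sigma_0,P)$, and $D^{(k)}$ is induced by the Hitchin connection on $\calC_\bbR^{(k)}\cong\End(H^{(k)})$; consequently $P_\infty^\calC(\sigma_0,P)$ corresponds, under the correspondence between Hermitian structures on $H^{(k)}$ and sections of $\calC_\bbR^{(k)}$, to pushing a Hermitian structure on $H_{\sigma_0}^{(k)}$ forward by $P_\infty(\sigma_0,P)$ to the limiting space $H_P^{(k)}$. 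Applying this to $\Theta_{\sigma_0}$, which represents $[\cdot,\cdot]_{P,\sigma_0}^{(k)}=(P_\infty(\sigma_0,P)(\cdot),P_\infty(\sigma_0,P)(\cdot))_P^{(k)}$, returns $(\cdot,\cdot)_P^{(k)}$, whose extension to $C^\infty(M,\calL^k)$ is $\int_M\langle s_1,s_2\rangle G_P^{(k)}\,\omega^m/m!$ under the normalisation $H^{(k)}=1$ — that is, the class of $G_P^{(k)}$ in $\calC_{\bbR,P}^{(k)}$. Combining the three points, $\Theta$ and the section defining $G^{(k)}$ are both $D^{(k)}$-covariant constant sections of $\calC_{\bbR,D}^{(k)}$ with the same image at $P$, hence coincide by the uniqueness recalled above; so $\Theta_\sigma$ is the class of $G^{(k)}_\sigma$ for every $\sigma\in\T$, which is the assertion.

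The step I expect to be the main obstacle is controlling the scalar normalisation throughout. The hypothesis ``projectively preserved by the Hitchin connection'' only yields a $D^{(k)}$-parallel section up to a positive scalar function on $\T$, and Theorem~\ref{HS} states only that $P_\infty(\sigma_0,P)$ is a \emph{projective} isometry; to promote this to the exact identity one must use that in Section~\ref{sect4} the map $P_\infty$ is realised as a genuine limit $E(\infty)$ built from the explicit operator $u$ of \eqref{equ}, not merely from the projective Hitchin connection, and that the choice $H^{(k)}=1$ pins down the constant on the boundary stratum corresponding to $P$. Once this normalisation is in hand, the remaining ingredients — block diagonality, $D^{(k)}$-parallelism, and uniqueness of a parallel section over the simply connected base $\T$ — are routine verifications.
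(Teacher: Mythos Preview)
Your proposal is correct and is exactly the intended reasoning: the paper gives no proof of this theorem beyond the phrase ``by construction'', and what you have written is the natural unpacking of that phrase --- both sides correspond to $D^{(k)}$-covariant constant sections of $\calC_{\bbR,D}^{(k)}$ with the same limiting value $G_P^{(k)}$ at the boundary stratum $P$ (normalised by $H^{(k)}=1$), hence coincide. Your explicit treatment of the scalar normalisation, via the genuine limit $E(\infty)$ of Section~\ref{sect4} rather than the merely projective parallel transport, is a point the paper leaves implicit.
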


Now we simply just need to observe that there is an asymptotic expansion of $G^{(k)}_\sigma$ in terms of $1/k$ by its very construction and by Claim \ref{C2} we further the theorem below and Theorem \ref{thm6}.

\begin{thm}
  \label{thm19}
  The Hermitian structures $[\cdot,\cdot]_P^{(k)}$ on $H^{(k)}$ are uniformly equivalent to $(\cdot,\cdot)_{L^2}^{(k)}$, i.e. for each $\sigma_0 \in \calT$, there is a constant $c_{P,\sigma_0}$ such that
  \begin{align*}
	c^{-1}_{P,\sigma_0} \abs{s}_{L^1} \leq \abs{s}_{P,\sigma_0} \leq c_{P,\sigma_0}\abs{s}_{L^2}
\end{align*}
for all $s \in H_{\sigma_0}^{(k)}$.
\end{thm}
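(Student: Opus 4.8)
The plan is to deduce Theorem~\ref{thm19} from the integral representation of $[\cdot,\cdot]^{(k)}_P$ provided by Theorem~\ref{thm18} together with the asymptotic expansion of its weight function in Theorem~\ref{thm6}. Fix $\sigma_0\in\calT$. By Theorem~\ref{thm18}, for all $s\in H^{(k)}_{\sigma_0}$ we have
\[
[s,s]^{(k)}_{P,\sigma_0} = \int_M \langle s,s\rangle\, G^{(k)}_{\sigma_0}\, \frac{\omega^m}{m!},
\]
while $\abs{s}^2_{L^2} = \int_M \langle s,s\rangle\, \omega^m/m!$, so the problem reduces to comparing the weight function $G^{(k)}_{\sigma_0}$ with the constant function $1$ on $M$, with constants independent of $k$.

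First I would treat the large-$k$ range. By Theorem~\ref{thm6}, $G^{(k)}_{\sigma_0} = \exp(-F_{\sigma_0} + O(1/k))$, where $F_{\sigma_0}\in C^\infty(M)$ is the Ricci potential of $(M_{\sigma_0},\omega)$. Since $M$ is compact and $F_{\sigma_0}$ is a fixed smooth function, $\exp(-F_{\sigma_0})$ lies between two positive constants on $M$; and the remainder is, by the meaning of the expansion, bounded in $C^0(M)$ by $C_{\sigma_0}/k$ with $C_{\sigma_0}$ independent of $k$. Hence there exist $k_0$ and constants $0<m_0\le M_0<\infty$, independent of $k$, such that $m_0\le G^{(k)}_{\sigma_0}\le M_0$ pointwise on $M$ for all $k\ge k_0$. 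Integrating these inequalities against the non-negative density $\langle s,s\rangle\,\omega^m/m!$ gives $m_0\,\abs{s}^2_{L^2}\le [s,s]^{(k)}_{P,\sigma_0}\le M_0\,\abs{s}^2_{L^2}$ for all $s\in H^{(k)}_{\sigma_0}$ and all $k\ge k_0$.

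For the finitely many remaining levels $k<k_0$ I would argue more crudely: each space $H^{(k)}_{\sigma_0}=H^0(M_{\sigma_0},\calL^k)$ is finite-dimensional, and on it both $(\cdot,\cdot)^{(k)}_{L^2}$ and $[\cdot,\cdot]^{(k)}_{P,\sigma_0}$ are positive-definite Hermitian forms --- the latter because it is the pullback of the positive-definite form $(\cdot,\cdot)^{(k)}_P$ along the isomorphism $P_\infty(\sigma_0,P)$ (Theorem~\ref{thm11}, Corollary~\ref{Vor}). Any two positive-definite Hermitian forms on a finite-dimensional space are equivalent with some constant, so for each such $k$ there is a constant $c_k$; taking $c_{P,\sigma_0}=\max\bigl(\{c_k : k<k_0\}\cup\{M_0^{1/2},m_0^{-1/2}\}\bigr)$, which is independent of $k$, yields $c_{P,\sigma_0}^{-1}\abs{s}_{L^2}\le \abs{s}_{P,\sigma_0}\le c_{P,\sigma_0}\abs{s}_{L^2}$ at every level. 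The only substantive point beyond bookkeeping is the uniform positive two-sided bound on $G^{(k)}_{\sigma_0}$, and the main obstacle there is that one needs the $O(1/k)$ term in Theorem~\ref{thm6} to be uniform over all of $M$ --- controlled in $C^0(M)$, not merely pointwise or locally --- which is exactly where compactness of $M$ and the smoothness of the family $G^{(k)}$ built in Section~\ref{sect8} enter; once the expansion is known in that sense, the remainder of the argument is immediate.
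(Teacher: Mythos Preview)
Your argument is correct and follows essentially the same route as the paper: the paper derives Theorem~\ref{thm19} directly from the integral representation of Theorem~\ref{thm18} together with the asymptotic expansion $G^{(k)}_\sigma=\exp(-F_\sigma+O(1/k))$ of Theorem~\ref{thm6}, observing that this gives uniform two-sided bounds on the weight. You have simply made explicit the compactness-of-$M$ step and the treatment of the finitely many small levels, which the paper leaves implicit in its one-sentence justification.
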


\end{document}